\documentclass[12pt]{amsart}
\usepackage{a4wide}
\usepackage[latin9]{inputenc}
\usepackage{amsmath}
\usepackage{amsfonts}
\usepackage{amssymb}
\usepackage{amsthm}
\usepackage{subfigure}
\newtheorem{theo}{Theorem}[section]
\newtheorem{prop}[theo]{Proposition}
\newtheorem{coro}[theo]{Corollary}
\newtheorem{lemm}[theo]{Lemma}

\theoremstyle{definition}

\theoremstyle{remark}
\newtheorem{rema}[theo]{Remark}
\newcommand{\Op}{\operatorname{Op}}

\newcommand{\nwc}{\newcommand}
\nwc{\eps}{\epsilon}
\nwc{\ep}{\epsilon}
\nwc{\vareps}{\varepsilon}
\nwc{\Oph}{\operatorname{Op}_\hbar}
\nwc{\la}{\langle}
\nwc{\ra}{\rangle}

\nwc{\mf}{\mathbf} 
\nwc{\blds}{\boldsymbol} 
\nwc{\ml}{\mathcal} 

\nwc{\defeq}{\stackrel{\rm{def}}{=}}

\nwc{\cE}{\ml{E}}
\nwc{\cN}{\ml{N}}
\nwc{\cO}{\ml{O}}
\nwc{\cP}{\ml{P}}
\nwc{\cU}{\ml{U}}
\nwc{\cV}{\ml{V}}
\nwc{\cW}{\ml{W}}
\nwc{\tU}{\widetilde{U}}
\nwc{\IN}{\mathbb{N}}
\nwc{\IR}{\mathbb{R}}
\nwc{\IZ}{\mathbb{Z}}
\nwc{\IC}{\mathbb{C}}
\nwc{\IT}{\mathbb{T}}
\nwc{\tP}{\widetilde{P}}
\nwc{\tPi}{\widetilde{\Pi}}
\nwc{\tV}{\widetilde{V}}
\nwc{\supp}{\operatorname{supp}}
\nwc{\rest}{\restriction}



\begin{document}

\title[Perturbed Schr\"odinger equation on negatively curved surfaces]{Long-time dynamics of the perturbed Schr\"odinger equation on negatively curved surfaces}

\author[Gabriel Rivi\`ere]{Gabriel Rivi\`ere}

\address{Laboratoire Paul Painlev\'e (U.M.R. CNRS 8524), U.F.R. de Math\'ematiques, Universit\'e Lille 1, 59655 Villeneuve d'Ascq Cedex, France}
\email{gabriel.riviere@math.univ-lille1.fr}

\begin{abstract} 
We consider perturbations of the semiclassical Schr\"odinger equation on a compact Riemannian surface with constant negative curvature and without boundary. We show that, for scales of times which are logarithmic in the size of the perturbation, the solutions associated to initial data in a small spectral window become equidistributed in the semiclassical limit. As an application of our method, we also derive some properties of the quantum Loschmidt echo below and beyond the Ehrenfest time for initial 
data in a small spectral window.
\end{abstract}

\maketitle

\section{Introduction}

In this article, we consider $(M,g)$ a smooth ($\ml{C}^{\infty}$), connected, orientable, boundaryless, compact and Riemannian manifold 
of dimension $d$. We want to study the long time dynamics of the following family of Schr\"odinger equations:
\begin{equation}\label{e:schrodinger}
\forall 0<\hbar\leq 1,\qquad 
\imath\hbar\frac{\partial u_{\hbar}}{\partial t}=\hat{P}(\hbar) u_{\hbar},\ \text{with}\ u_{\hbar}\rceil_{t=0}=\psi_{\hbar},
\end{equation}
where $(\psi_{\hbar})_{0<\hbar\leq 1}$ is a sequence of \emph{normalized} initial data in $L^2(M)$ satisfying proper oscillatory assumptions, and
\begin{equation}\label{e:schr-op}
 \hat{P}(\hbar):=-\frac{\hbar^2\Delta_g}{2}+\eps_{\hbar} V,
\end{equation}
with $\Delta_g$ the Laplace Beltrami operator induced by the Riemannian metric $g$, $V\in\ml{C}^{\infty}(M,\IR)$ and $\eps_{\hbar}\rightarrow 0$ as $\hbar\rightarrow 0^+$. 
We aim at describing the dynamics of these equations in the semiclassical limit $\hbar\rightarrow 0^+$. Due to the semiclassical approximation, these properties will be related to 
the properties of the geodesic flow $(G_0^t)_{t\in\IR}$ acting on $T^*M$. For instance, if we denote by $u_{\hbar}(\tau)$ the solution of~\eqref{e:schrodinger} at time $\tau$, 
i.e.
$$u_{\hbar}(\tau):=e^{-\frac{i\tau\hat{P}(\hbar)}{\hbar}}\psi_{\hbar},$$
then we can introduce its ``Wigner distribution''\footnote{This terminology is often reserved to the case where we consider $\IR^d$.} on $T^*M$ 
\begin{equation}\label{e:wigner}\forall a\in \ml{C}^{\infty}_c(T^*M),\ \mu_{\hbar}(\tau)(a):=\left\la u_{\hbar}(\tau),
\Oph(a)u_{\hbar}(\tau)\right\ra,
\end{equation}
where $\Oph(a)$ is a $\hbar$-pseudodifferential operator with principal symbol $a$ -- see~\cite{Zw12}. This quantity describes the distribution of 
the solution of~\eqref{e:schrodinger} in $T^*M$. As an application of the Egorov theorem~\cite{Zw12}, one finds that, for every fixed $\tau$ in $\IR$,
\begin{equation}\label{e:egorov}
\forall a\in \ml{C}^{\infty}_c(T^*M),\ \mu_{\hbar}(\tau)(a)=\mu_{\hbar}(0)(a\circ G_0^{\tau})+o(1),\ \text{as}\ \hbar\rightarrow 0^+.
\end{equation}
In other words, the ``Wigner distribution'' at time $\tau$ is related to the one at time $0$ through the action of the geodesic flow. This simple relation illustrates the 
connection between the quantum evolution and the classical one in the semiclassical limit $\hbar\rightarrow 0^+$. 


Our goal is to study the long time dynamics of~\eqref{e:schrodinger} through these distributions. For that purpose, we will study the properties of $\mu_{\hbar}(\tau)$ 
where $\tau=\tau_{\hbar}$ will depend on $\hbar>0$, and more specifically when it will tend to $+\infty$ as $\hbar\rightarrow 0^+$. We will focus on geometric 
situations where the geodesic flow enjoys some chaotic features, e.g. the Anosov property~\cite{KaHa, Rug07} -- the main examples being negatively curved 
manifolds~\cite{Ano67}. These considerations are related to questions arising in the field of quantum chaos, where one wants to understand the influence of the 
chaotic properties of the classical system on its quantum counterpart.

In the definition of the Schr\"odinger operator~\eqref{e:schr-op}, we added a self-adjoint perturbation $\eps_{\hbar}V$, where $\eps_{\hbar}\rightarrow 0$. 
In fact, we will not look at the dynamics of the ``free'' Schr\"odinger equation (where $V\equiv 0$), and our goal is rather to understand the influence of 
this kind of self-adjoint perturbations on the long time dynamics. Looking at the influence of selfadjoint perturbations on the quantum dynamics of chaotic 
systems is related to the question of the quantum Loschmidt echo in the physics literature -- see section~\ref{s:loschmidt} for a brief reminder 
or~\cite{GPSZ06, JaPe09, GJPW12} for recent surveys on these issues. In the mathematics literature, these kind of considerations have recently appeared 
in several places. In~\cite{BolSc06, CoRo07}, the authors were interested by questions directly related to the quantum Loschmidt echo. 
In~\cite{EsTo12, CanJaTo12}, the authors looked at magnetic (or metric) perturbations of the Schr\"odinger operator on a general compact manifold, 
and they obtained some informations on the pointwise bounds of the solutions of the perturbed Schr\"odinger equation for a \emph{finite time}, 
and for a typical choice of perturbations. In~\cite{EsRi14}, the questions were close to the ones considered in the present article, and we will compare 
more precisely below our results to those from this reference. The tools used here are in fact the continuation of the ones introduced in~\cite{EsRi14}. As another application of the methods from the present article, we will also deduce some properties on the quantum Loschmidt echo below and beyond the Ehrenfest time -- see section~\ref{s:loschmidt}. Finally, the long time dynamics of the ``perturbed'' Schr\"odinger equation for integrable systems is studied in~\cite{MacRiv14}.

\begin{rema} We underline that, even if the perturbations we will consider will be small in the semiclassical limit, they will be quite strong at the quantum level as we will require in our statements that $\eps_{\hbar}\geq\hbar^{\nu}$ for some $0<\nu<1/2$. In fact, according to the semiclassical Weyl law~\cite{Zw12}, the ``mean level spacing'' for the eigenvalues is of order $\hbar^{d}$ where $d$ is the dimension of $M$. 
\end{rema}

\section{Statement of the main results}

When the geodesic flow satisfies some chaotic properties, one of the classical results on the semiclassical distribution of the solutions of the ``unperturbed'' Schr\"odinger equation is the quantum ergodicity theorem~\cite{Sh74, Ze87, CdV85, HeMaRo87, Zw12}. This property is usually formulated for stationary solutions of the Schr\"odinger equations but it can be generalized to more general solutions~\cite{AnRi12}. In order to state this result, we fix some sequence $(\delta_{\hbar})_{0<\hbar\leq 1}$ which satisfies $\delta_{\hbar}\rightarrow 0$ as $\hbar\rightarrow 0^+$ and $\delta_{\hbar}\geq \alpha\hbar$ for some fixed constant $\alpha>0$. We define then
$$\ml{H}_{\hbar}:=\mathbf{1}_{[1-\delta_{\hbar},1+\delta_{\hbar}]}\left(-\hbar^2\Delta_g\right)L^2(M).$$
If the Liouville measure $L$ is ergodic for the geodesic flow on the unit cotangent bundle\footnote{Here $p_0(x,\xi):=\frac{g^*_{x}(\xi,\xi)}{2}$ with $g^*$ the metric induced by $g$ on the cotangent bundle.}
$$S^*M:=\left\{(x,\xi)\in T^*M: p_0(x,\xi)=\frac{1}{2}\right\},$$
then it is known that this space is of dimension $N(\hbar)\sim C_M\hbar^d/\delta_{\hbar}$, for some fixed constant $C_M>0$ depending only on $(M,g)$~\cite{DiSj99}. The quantum ergodicity theorem can be stated as follows~\cite{Sh74, Ze87, CdV85, HeMaRo87, Zw12, AnRi12}:
\begin{theo}[Quantum Ergodicity] Let $(\tau_{\hbar})_{0<\hbar\leq 1}$ such that $lim_{\hbar\rightarrow 0^+}\tau_{\hbar}=+\infty$. Suppose that the Liouville measure $L$ is ergodic for the geodesic flow on $S^*M$. Then, for every orthonormal basis $(\psi_{\hbar}^j)_{j=1,\ldots, N(\hbar)}$ of $\ml{H}_{\hbar}$, one can find $J(\hbar)\subset\{1,\ldots, N(\hbar)\}$ such that
$$\lim_{\hbar\rightarrow 0^+}\frac{\sharp J(\hbar)}{N(\hbar)}=1,$$ 
and, for every $a$ in $\ml{C}^{\infty}_c(T^*M)$, for every $\varphi$ in $L^1(\IR)$, one has
$$\lim_{\hbar\rightarrow 0,j\in J(\hbar)}\int_{\IR}\varphi(t)\left\la \psi_{\hbar}^j,
e^{-it\tau_{\hbar}\hbar\Delta_g}\Oph(a)e^{it\tau_{\hbar}\hbar\Delta_g}\psi_{\hbar}^j\right\ra dt=\int_{\IR}\varphi(t)dt\times\int_{S^*M}adL.$$

\end{theo}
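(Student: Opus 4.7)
I plan to follow the Shnirelman--Zelditch--Colin de Verdi\`ere variance scheme, in its time-dependent form. Write $\hat P_0 := -\hbar^2\Delta_g/2$, let $U_\hbar(s) := e^{-is\hat P_0/\hbar}$ be the free propagator, and set $T_\hbar := 2\tau_\hbar$ and $A_\hbar(s) := U_\hbar(s)^*\Oph(a)U_\hbar(s)$. A short calculation gives $e^{-it\tau_\hbar\hbar\Delta_g}\Oph(a)e^{it\tau_\hbar\hbar\Delta_g} = A_\hbar(T_\hbar t)$, so each matrix element in the theorem equals $I_\hbar^j := \la \psi_\hbar^j, B_\hbar \psi_\hbar^j\ra$ with $B_\hbar := \int \varphi(t) A_\hbar(T_\hbar t)\,dt$. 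By Cauchy--Schwarz and a diagonal extraction over a countable dense family of pairs $(a, \varphi)$, the theorem reduces to the variance bound
\[
\frac{1}{N(\hbar)}\sum_{j=1}^{N(\hbar)} |I_\hbar^j - c|^2 \;\leq\; \frac{\operatorname{Tr}\bigl(\Pi_\hbar (B_\hbar - c)^*(B_\hbar - c)\Pi_\hbar\bigr)}{N(\hbar)} \xrightarrow[\hbar\to 0^+]{} 0,
\]
where $c := \int\varphi\,dt\cdot\int_{S^*M} a\,dL$ and $\Pi_\hbar := \mathbf{1}_{[1-\delta_\hbar, 1+\delta_\hbar]}(-\hbar^2\Delta_g)$.

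The cross-terms in the expansion of this trace are handled at once: since $\Pi_\hbar$ commutes with $U_\hbar(s)$, cyclicity gives $\operatorname{Tr}(\Pi_\hbar B_\hbar)/N(\hbar) = (\int\varphi)\,\operatorname{Tr}(\Pi_\hbar \Oph(a))/N(\hbar) \to c$ by the semiclassical Weyl law on the invariant energy shell $S^*M=\{p_0=1/2\}$. It remains to prove that the quadratic trace $\operatorname{Tr}(\Pi_\hbar B_\hbar^*B_\hbar\Pi_\hbar)/N(\hbar)$ tends to $|c|^2$. Expanding the double integral in $B_\hbar^*B_\hbar$ and using the identity $A_\hbar(s)^*A_\hbar(s') = U_\hbar(s)^*\Oph(a)^*U_\hbar(s-s')\Oph(a)U_\hbar(s')$, together with cyclicity and $[\Pi_\hbar,U_\hbar]=0$, one sees that the inner trace depends only on $v:=s-s'$. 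The change of variables $(s,s')\mapsto (r,w)$ with $r=s-s'$ then produces
\[
\frac{\operatorname{Tr}(\Pi_\hbar B_\hbar^*B_\hbar\Pi_\hbar)}{N(\hbar)} = \int f(r)\,R_\hbar(T_\hbar r)\,dr, \quad R_\hbar(v) := \frac{\operatorname{Tr}\bigl(\Pi_\hbar U_\hbar(v)^*\Oph(a)^*U_\hbar(v)\Oph(a)\bigr)}{N(\hbar)},
\]
where the continuous, integrable autocorrelation $f(r) := \int\overline{\varphi(r+w)}\varphi(w)\,dw$ satisfies $\int f = |\int\varphi|^2$.

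The main obstacle is the passage to the ergodic limit: the quantum time $T_\hbar r$ typically exceeds the Ehrenfest scale, so one cannot replace $R_\hbar(v)$ by the classical correlation $C(v) := \int_{S^*M}\bar a\cdot a\circ G_0^v\,dL$ via a pointwise Egorov argument. Instead, expand $R_\hbar(v)$ on the eigenbasis $(e_k)$ of $-\hbar^2\Delta_g$, with eigenvalues $\lambda_k = 2E_k$, to obtain the exact Fourier-type identity
\[
\int f(r)\,R_\hbar(T_\hbar r)\,dr = \frac{1}{N(\hbar)}\sum_{\substack{k\in W\\ l\geq 1}} \bigl|\la e_l,\Oph(a)\,e_k\ra\bigr|^2 \,\bigl|\hat\varphi\bigl(T_\hbar(E_k - E_l)/\hbar\bigr)\bigr|^2,
\]
where $W$ indexes the spectral window. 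Riemann--Lebesgue decay of $\hat\varphi$ concentrates the effective contributions on near-diagonal pairs $|E_k-E_l|\lesssim \hbar/T_\hbar$, and semiclassical Weyl asymptotics for such sums of matrix elements, combined with the shrinkage $\delta_\hbar\to 0$ of the spectral window, identify the right-hand side with the classical Ces\`aro average $T_\hbar^{-1}\int f(u/T_\hbar)\,C(u)\,du$. Finally, the ergodicity of the Liouville measure for $(G_0^t)$ yields, via von Neumann's mean ergodic theorem, $T^{-1}\int_0^T C(u)\,du \to |\int a\,dL|^2$, so approximating $f$ by simple functions gives the claimed limit $\int f\cdot|\int a\,dL|^2 = |c|^2$ and closes the argument.
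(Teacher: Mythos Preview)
The paper does not prove this theorem: it is quoted as background and attributed to \cite{Sh74, Ze87, CdV85, HeMaRo87, Zw12, AnRi12}, so there is no in-paper argument to compare against. What can be assessed is whether your outline is a valid proof along the lines of those references.

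Your spectral identity
\[
\int f(r)\,R_\hbar(T_\hbar r)\,dr=\frac{1}{N(\hbar)}\sum_{k\in W,\;l\geq 1}\bigl|\langle e_l,\Oph(a)e_k\rangle\bigr|^2\,\bigl|\hat\varphi\bigl(T_\hbar(E_k-E_l)/\hbar\bigr)\bigr|^2
\]
is correct, and so is the reduction to it. The gap is the sentence ``semiclassical Weyl asymptotics for such sums of matrix elements, combined with the shrinkage $\delta_\hbar\to 0$ of the spectral window, identify the right-hand side with the classical Ces\`aro average $T_\hbar^{-1}\int f(u/T_\hbar)\,C(u)\,du$''. After the change of variables $u=T_\hbar r$ this would require $R_\hbar(u)\approx C(u)$ for $|u|$ of size $T_\hbar$, which is precisely the long-time Egorov statement you correctly flagged as unavailable. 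No standard off-diagonal Weyl law supplies this for an \emph{arbitrary} scale $T_\hbar\to\infty$: the window $|E_k-E_l|\lesssim\hbar/T_\hbar$ is too fine, and for $\varphi\in L^1$ the Riemann--Lebesgue decay of $\hat\varphi$ carries no rate, so splitting into ``near-diagonal'' and ``far'' contributions does not close the estimate. Note also that ergodicity alone gives $C(u)\to|\int a\,dL|^2$ only in the Ces\`aro sense, not pointwise, so even the classical limit step is delicate.

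The standard route (as in \cite{AnRi12}) avoids this entirely. Subtract the mean so that $\int_{S^*M}a\,dL=0$ and hence $c=0$. For fixed $S>0$, set $a_S:=\frac{1}{S}\int_0^S a\circ G_0^s\,ds$. Finite-time Egorov gives $\Oph(a_S)=\frac{1}{S}\int_0^S A_\hbar(s)\,ds+\mathcal{O}_{L^2\to L^2}(\hbar)$, and then
\[
\int\varphi(t)\,U_\hbar(T_\hbar t)^*\Oph(a_S)U_\hbar(T_\hbar t)\,dt=\int\Bigl(\tfrac{1}{S}\int_0^S\varphi\bigl(t-\tfrac{s}{T_\hbar}\bigr)\,ds\Bigr)A_\hbar(T_\hbar t)\,dt+\mathcal{O}(\hbar).
\]
Since translations act continuously on $L^1$, the inner average converges to $\varphi$ in $L^1$ as $\hbar\to 0$ (this is exactly where $T_\hbar\to\infty$ is used), so $B_\hbar=B_\hbar^{(S)}+o_{L^2\to L^2}(1)$ with $B_\hbar^{(S)}$ built from $a_S$. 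Then
\[
\frac{1}{N(\hbar)}\sum_j\bigl|\langle\psi_\hbar^j,B_\hbar^{(S)}\psi_\hbar^j\rangle\bigr|^2\leq\frac{\|B_\hbar^{(S)}\Pi_\hbar\|_{HS}^2}{N(\hbar)}\leq\|\varphi\|_{L^1}^2\,\frac{\|\Oph(a_S)\Pi_\hbar\|_{HS}^2}{N(\hbar)}\xrightarrow[\hbar\to 0]{}\|\varphi\|_{L^1}^2\int_{S^*M}|a_S|^2\,dL,
\]
using unitary invariance of the Hilbert--Schmidt norm and the local Weyl law at fixed $S$. Finally let $S\to\infty$ and invoke von Neumann's mean ergodic theorem. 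This two-parameter limit ($\hbar\to 0$ then $S\to\infty$) is what replaces your unjustified interchange.
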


This theorem tells us that, under ergodicity of the Liouville measure, the solutions of the ``free'' Schr\"odinger equation become equidistributed for a ``generic choice'' of initial data microlocalized near $S^*M$. We also observe that equidistribution occurs when we average over the time parameter $t$. This result is very robust and it holds for many quantum systems with an underlying chaotic classical system.

Again, this result holds for a typical choice of initial data, and it is a difficult problem to understand what can be said for \emph{a fixed sequence} of normalized initial data $(\psi_{\hbar})_{0<\hbar\leq 1}$ -- see \cite{Ze10, Sa11, No13} for recent reviews on these questions. We mention that, for short scales of times (which are at most logarithmic in $\hbar>0$), one can also get a good description of the distributions $(\mu_{\hbar}(\tau_{\hbar}))_{0<\hbar\leq 1}$ for certain class of initial data, namely coherent states~\cite{CoRo97, BoDB00, BouDB05}, or Lagrangian states~\cite{Sch05, AnNo07, An08}.

Instead of looking at particular families, or at generic families of initial data for the unperturbed equation, we will now try to understand, \emph{for general sequences of initial data}, the quantum evolution under \emph{the perturbed Schr\"odinger} equation~\eqref{e:schrodinger}, i.e. when $V\neq 0$. This question was already discussed in~\cite{EsRi14}. 
Precisely, we will now consider sequences of initial data $(\psi_{\hbar})_{0<\hbar\leq 1}$ which satisfy the following property:
\begin{equation}\label{e:hosc}
\lim_{R\rightarrow+\infty}\limsup_{\hbar\rightarrow 0^+}\left\Vert \mathbf{1}_{\left[  1-R\hbar,1+R\hbar\right]
}\left(  -\hbar^{2}\Delta\right)  \psi_{\hbar}-\psi_{\hbar}\right\Vert _{L^{2}\left(  M\right)
}\longrightarrow 0,\  \text{and}\ \forall 0<\hbar\leq 1,\ \|\psi_{\hbar}\|_{L^2}=1.
\end{equation}
\begin{rema}
Recall that in~\cite{EsRi14}, it was only required that, for every $\delta_0>0$,
\begin{equation}\label{e:hosc0}
\lim_{\hbar\rightarrow 0^+}\left\Vert \mathbf{1}_{\left[  1-\delta_0,1+\delta_0\right]
}\left(  -\hbar^{2}\Delta_g\right)  \psi_{\hbar}-\psi_{\hbar}\right\Vert _{L^{2}\left(  M\right)
}=0,\ \text{and}\ \forall 0<\hbar\leq 1,\ \|\psi_{\hbar}\|_{L^2}=1.
\end{equation}
which is more general than~\eqref{e:hosc}. In section~\ref{s:semiclassical}, we will in fact consider slightly more general initial data than the ones satisfying~\eqref{e:hosc}; yet, the assumptions will still be more restricitive than~\eqref{e:hosc0}.
\end{rema}
Our goal is to understand the action for short logarithmic times of the ``perturbed'' Schr\"odinger propagator $e^{-\frac{i\tau_{\hbar}\hat{P}(\hbar)}{\hbar}}$ on initial data satisfying~\eqref{e:hosc}, and to show that imposing~\eqref{e:hosc} instead of~\eqref{e:hosc0} allows to improve substantially the results from~\cite{EsRi14} -- see corollary~\ref{c:coro1} below. In section~\ref{s:loschmidt}, we will also show the relevance of this approach in the study of the quantum Loschmidt echo.


\subsection{Semiclassical measures} Let $(\tau_{1}(\hbar))_{0<\hbar\leq 1}$ and $(\tau_{2}(\hbar))_{0<\hbar\leq 1}$ be two sequences which satisfy $\tau_1(\hbar)\leq \tau_2(\hbar)$. We define then
$$\ml{M}([\tau_1(\hbar),\tau_2(\hbar)],\hbar\rightarrow 0^+),$$
as the set of accumulation points in $\ml{D}'(T^*M)$ (as $\hbar\rightarrow 0^+$) of the sequences of distributions $(\mu_{\hbar}(\tau_{\hbar}))_{0<\hbar\leq 1}$ defined by~\eqref{e:wigner} where 
\begin{itemize}
\item $(\psi_{\hbar})_{0<\hbar\leq 1}$ varies among sequences satisfying~\eqref{e:hosc},
\item $(\tau_{\hbar})_{0<\hbar\leq 1}$ varies among sequences satisfying $\tau_1(\hbar)\leq\tau_{\hbar}\leq\tau_2(\hbar)$ for $\hbar>0$ small enough.
\end{itemize}
Any element in $\ml{M}([\tau_1(\hbar),\tau_2(\hbar)],\hbar\rightarrow 0^+)$ is in fact a probability measure carried on $S^*M$ that is called a semiclassical (defect) measure~\cite{Ge91, Burq97, Zw12}. We underline that these measures have a priori no extra properties like invariance by the geodesic flow. Our goal is to describe the properties of the measures belonging to $\ml{M}([\tau_1(\hbar),\tau_2(\hbar)],\hbar\rightarrow 0^+)$. Given a subsequence $(\psi_{\hbar_n})_{n\in\mathbb{N}}$ satisfying~\eqref{e:hosc} with $\hbar_n\rightarrow 0^+$, we define the following subset of $\ml{M}([\tau_1(\hbar),\tau_2(\hbar)],\hbar\rightarrow 0^+)$:
$$\ml{M}(\psi_{\hbar_n},[\tau_1(\hbar_n),\tau_2(\hbar_n)],\hbar_n\rightarrow 0^+),$$
where we restricted ourselves to the sequence of initial data $(\psi_{\hbar_n})_{n\in\mathbb{N}}$.

\begin{rema}
For a given sequence $(\psi_{\hbar})_{0<\hbar\leq 1}$ satisfying~\eqref{e:hosc}, one can also extract a subsequence $\hbar_n\rightarrow 0^+$ such that the subsequence $(\mu_{\hbar_n}(0))_{n\in\mathbb{N}}$ converge in $\ml{D}'(T^*M)$ to some accumulation point $\mu_0$ which is a probability measure on $S^*M$. We note that, if we make the \emph{stronger assumption} that
 $$\left\|(-\hbar^2\Delta_g-1)\psi_{\hbar}\right\|_{L^2}=o\left(\hbar\right),\ \|\psi_{\hbar}\|_{L^2}=1,$$
then the accumulation point $\mu_0$ is in fact invariant by the geodesic flow $G_0^t$ acting on $S^*M$~\cite{Zw12} -- Ch.~$5$. In the case of a compact congruence 
surface, it was proved that, if $(\psi_{\hbar})_{0<\hbar\leq 1}$ is also a sequence of $o(1)$-quasimodes for a certain given Hecke operator $T_p$, then $\mu_0=L$~\cite{BroLin14}. 
\end{rema}

Finally, we define another particular subset of $\ml{M}([\tau_1(\hbar),\tau_2(\hbar)],\hbar\rightarrow 0^+)$, i.e.
$$\ml{M}_{\log}([\tau_1(\hbar),\tau_2(\hbar)],\hbar\rightarrow 0^+),$$
where we impose the extra assumption that the initial data vary among sequences $(\psi_{\hbar})_{0<\hbar\leq 1}$ satisfying 
\begin{equation}\label{e:hosc-log}
\left\|(-\hbar^2\Delta_g-1)\psi_{\hbar}\right\|_{L^2}=o\left(\hbar|\log\hbar|^{-1}\right),\ \|\psi_{\hbar}\|_{L^2}=1.
\end{equation}
According to~\cite{An08}, the semiclassical measures of such initial data have positive metric entropy.

\begin{rema}
We emphasize that \emph{all the sets of semiclassical measures we have introduced in this paragraph will depend implicitly on the choice of the sequence 
$(\eps_{\hbar})_{0<\hbar\leq 1}$, and on the potential $V$}. They both play an important role in the statements below; however, in order to alleviate notations, 
they do not appear explicitely in the conventions we used for the sets of semiclassical measures $\ml{M}(\ldots)$.

\end{rema}

\subsection{Main theorem}

In our different results, an important role will be played by the function
\begin{equation}\label{e:unst-comp}
\forall (x_0,\xi_0)\in S^*M,\ \ f_V(x_0,\xi_0):=g^*_{x_0}(d_{x_0}V,\xi_0^{\perp}),
\end{equation}
where $\xi_0^{\perp}\in S_{x_0}^*M$ is the vector directly orthogonal to $\xi_0$. In the case of negatively curved surfaces, it represents (up to a constant) the unstable component of the Hamiltonian vector field associated to $V$ on $T^*M$~\cite{EsRi14} -- section $3$. More precisely, our results will depend on the geometry of the set of ``critical points'' of $f_V$:
\begin{equation}\label{e:J-critical}
\ml{C}_V:=\bigcap_{j=0}^{+\infty}\left\{(x_0,\xi_0)\in S^*M:(X_0^j.f_V)(x_0,\xi_0)=0\right\},
\end{equation}
where $X_0$ is the geodesic vector field, i.e. $X_0(\rho)=\frac{d}{dt}(G_0^t(\rho))_{|t=0}$ for every $\rho$ in $S^*M$, and where $X_0^j$ means that we differentiate $j$ times in the direction of $X_0$. Our main result is the following:

\begin{theo}\label{t:maintheo} Suppose that $\text{dim}(M)=2$ and that it has constant negative curvature $K\equiv -1$. Suppose that $\eps_{\hbar}\rightarrow 0$ as $\hbar\rightarrow 0$, and that there exists $0<\nu<1/2$ such that, for $\hbar>0$ small enough,
$$\hbar^{\nu}\leq\eps_{\hbar}.$$
 

Then, for every sequence $(\psi_{\hbar_n})_{n\in\mathbb{N}}$ which satisfies~\eqref{e:hosc} with $\hbar_n\rightarrow 0^+$, and which has an unique semiclassical measure $\mu_0$, for every $1<c_1\leq c_2<\min\{3/2, 1/(2\nu)\}$, for every $$\mu\in\ml{M}(\psi_{\hbar_n},[c_1|\log\eps_{\hbar_n}|,c_2|\log\eps_{\hbar_n}|],\hbar_n\rightarrow 0^+),$$ 
and for every $a\in\ml{C}^{\infty}(S^*M,\IR)$, one has
$$\mu_0(\ml{C}_V)\min\{a\}+ (1-\mu_0(\ml{C}_V))\int_{S^*M}adL\leq\mu(a)\leq\mu_0(\ml{C}_V)\max\{a\}+ (1-\mu_0(\ml{C}_V))\int_{S^*M}adL.$$
\end{theo}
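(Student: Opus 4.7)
The plan is to combine an interaction-picture reduction, a long-time Egorov analysis along the one-dimensional unstable foliation, and the mixing of the geodesic flow on $(M,g)$. I would proceed in four steps.

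\emph{Splitting of the initial mass.} Choose a smooth cutoff $\chi_\eta\in\ml{C}^\infty(S^*M)$ equal to $1$ on an $\eta$-neighborhood of $\ml{C}_V$ and vanishing outside a $2\eta$-neighborhood, and decompose $\psi_{\hbar_n}=\Oph(\chi_\eta)\psi_{\hbar_n}+\Oph(1-\chi_\eta)\psi_{\hbar_n}$ modulo microlocal remainders made negligible by~\eqref{e:hosc}. Applied to $\mu_{\hbar_n}(\tau_{\hbar_n})(a)$, the contribution of the first piece lies in $[\mu_0(\chi_\eta)\min a,\mu_0(\chi_\eta)\max a]+o(1)$, by the sharp G{\aa}rding inequality applied to the positive symbols $a-\min a$ and $\max a-a$ together with unitarity of the perturbed propagator. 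Letting $\eta\to 0$ isolates the interval $[\mu_0(\ml{C}_V)\min a,\mu_0(\ml{C}_V)\max a]$, and the theorem is reduced to showing that the Wigner distribution of $e^{-i\tau_{\hbar_n}\hat P(\hbar_n)/\hbar_n}\Oph(1-\chi_\eta)\psi_{\hbar_n}$ equidistributes to $\mu_0(1-\chi_\eta)\,L$ as $\hbar_n\to 0^+$ and then $\eta\to 0$.

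\emph{Interaction picture and Egorov below Ehrenfest.} Write $e^{-i\tau\hat P(\hbar)/\hbar}=e^{-i\tau P_0/\hbar}W_\hbar(\tau)$, where $P_0:=-\hbar^2\Delta_g/2$ and $W_\hbar(\tau)$ is generated by the time-dependent operator $\eps_\hbar V_\tau:=\eps_\hbar\, e^{i\tau P_0/\hbar}Ve^{-i\tau P_0/\hbar}$. Because the Lyapunov exponent of $G_0^t$ equals $1$ (curvature $-1$, dimension $2$) and $c_2\nu<1/2$, the whole window $\tau\in[c_1|\log\eps_\hbar|,c_2|\log\eps_\hbar|]$ lies strictly below $(1/2)|\log\hbar|$, so the semiclassical Egorov theorem gives $V_\tau=\Oph(V\circ G_0^\tau)+o_{L^2\to L^2}(1)$; the stronger $O(\hbar)$ spectral localization in~\eqref{e:hosc} (rather than~\eqref{e:hosc0}) is precisely what avoids an extra logarithmic loss here and allows reaching the upper endpoint $c_2<3/2$ instead of $c_2<1$ as in~\cite{EsRi14}. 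Conjugating $\Oph(a)$ by $e^{i\tau P_0/\hbar}$ similarly replaces its principal symbol by $a\circ G_0^\tau$, so the problem reduces to analyzing $W_\hbar(\tau)^*\Oph(a\circ G_0^\tau)W_\hbar(\tau)$ tested on $\Oph(1-\chi_\eta)\psi_{\hbar_n}$.

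\emph{Unstable drift and equidistribution by mixing.} The classical counterpart of $W_\hbar(\tau)$ is the Hamiltonian flow generated by the time-dependent symbol $\eps_\hbar V\circ G_0^\tau$. In Fermi coordinates along the one-dimensional unstable foliation of $G_0^t$, its unstable displacement starting at $\rho\in S^*M$ is, to leading order,
\[
\Delta_u(\tau,\rho)\;=\;\eps_\hbar\int_0^\tau e^{\tau-s}f_V(G_0^s(\rho))\,ds\;+\;O\bigl((\eps_\hbar e^\tau)^2\bigr),
\]
of size $\eps_\hbar e^\tau=\eps_\hbar^{1-c}$ for $\tau=c|\log\eps_\hbar|$ and $\rho$ away from $\ml{C}_V$; the bound $c_2<3/2$ leaves just enough room to absorb the quadratic remainder after a classical normal form. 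For $c>1$ this unstable shift is macroscopic in the $\hbar\to 0$ limit, and a $\hbar$-Fourier integral representation of $W_\hbar(\tau)$ combined with non-stationary phase in the unstable direction shows that the test operator $W_\hbar(\tau)^*\Oph(a\circ G_0^\tau)W_\hbar(\tau)$ effectively averages $a\circ G_0^\tau$ over an unstable segment of length $\sim\eps_\hbar e^\tau$. By the exponential mixing of the Anosov geodesic flow on the hyperbolic surface (equidistribution of expanding pieces of unstable manifold under $G_0^\tau$), this average converges to $\int_{S^*M}a\,dL$, which combined with the first step yields the two-sided inequality of Theorem~\ref{t:maintheo}.

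\emph{Main obstacle.} The delicate point is the non-stationary phase argument of the third step on the stratification of $\ml{C}_V$: on a point where $f_V$ vanishes to finite order $j$ along the geodesic orbit, each integration by parts gains only $(\hbar/\eps_\hbar)^{1/(j+1)}=\hbar^{(1-\nu)/(j+1)}$, and the tubular neighborhood on which one must restrict the analysis shrinks with $j$, so these losses have to be summed against the geometry of the stratified set $\ml{C}_V$. Balancing this loss against the unstable amplification $e^\tau$, the strength $\eps_\hbar$ of the perturbation, and the sub-Ehrenfest restriction for Egorov is exactly what imposes the two upper constraints $c_2<3/2$ and $c_2<1/(2\nu)$ on the time window.
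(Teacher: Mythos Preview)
Your strategy differs from the paper's in several essential ways, and as written it has genuine gaps.

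First, the splitting in Step~1 produces a cross term $2\Re\langle U\Oph(\chi_\eta)\psi,\Oph(a)U\Oph(1-\chi_\eta)\psi\rangle$ that you never treat; it is of size $O(\sqrt{\mu_0(\chi_\eta)\mu_0(1-\chi_\eta)})$ and does not vanish as $\eta\to 0$ unless $\mu_0(\ml{C}_V)\in\{0,1\}$. The paper avoids this entirely by never splitting the state: it first applies Egorov to replace $\Oph(a)$ by $\Oph(a\circ G_\eps^{\tau_\hbar})$, then exploits the strong spectral localization~\eqref{e:hosc} via the identity $e^{-is\hbar\Delta_g/2}\psi_\hbar=e^{-is/(2\hbar)}\psi_\hbar+o(1)$ for $|s|\le\eps_\hbar^{\nu_2}$ to introduce a \emph{short-time average along the free flow}, yielding the classical symbol $\frac{1}{\eps_\hbar^{\nu_2}}\int_0^{\eps_\hbar^{\nu_2}}a\circ G_\eps^{\tau_\hbar}\circ G_0^s\,ds$. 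One then passes to a positive quantization and bounds this symbol pointwise (by $\min a$ near $\ml{C}_V$, by a quantity approaching $\int a\,dL$ elsewhere), which gives the two-sided inequality directly with no cross terms. This also shows that you have misplaced the role of~\eqref{e:hosc}: it is not used to sharpen Egorov (an operator statement, valid up to $c_2\nu|\log\hbar|$ regardless of the state) but precisely to justify this short-time averaging, which replaces the average over $\eps$ performed in~\cite{EsRi14}.

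Second, your perturbative drift formula $\Delta_u=\eps_\hbar\int_0^\tau e^{\tau-s}f_V(G_0^s\rho)\,ds+O((\eps_\hbar e^\tau)^2)$ cannot be used: for $\tau=c|\log\eps_\hbar|$ with $c>1$ the ``remainder'' $(\eps_\hbar e^\tau)^2=\eps_\hbar^{2-2c}$ diverges and dominates the leading term, and no unspecified normal form will cure this. The paper does not expand perturbatively; it applies the \emph{strong structural stability theorem} to conjugate the perturbed flow on $S^*M$ to the geodesic flow by a homeomorphism $h^\eps$, and shows that $(h^\eps)^{-1}$ is within $O(\eps^{1+\gamma})$ (any $\gamma<1/2$) of an explicit shift along the unstable horocycle. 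This error is amplified by $e^T$ under $G_0^T$, giving $O(\eps^{1+\gamma}e^T)=O(\eps_\hbar^{1+\gamma-c})$, small exactly when $c<1+\gamma<3/2$; this is the actual origin of the bound $c_2<3/2$. The classical integral then becomes a reparametrized Birkhoff average for the horocycle flow, and the proof concludes via \emph{unique ergodicity of the horocycle flow} together with Cartan's polynomial lemma to control the Jacobian near the finite-order zeros of $s\mapsto f_V(G_0^s\rho)$ (this is where the sets $\ml{C}_V^J$ enter). Your non-stationary-phase/FIO sketch for $W_\hbar(\tau)$ does not supply this mechanism.
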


This theorem is a consequence of proposition~\ref{p:semicl} below which is slightly more precise as it allows, for each choice of $(\eps_{\hbar})_{0<\hbar\leq 1}$ to consider more general families of initial data -- see remark~\ref{r:general-data}.

In order to clarify our statement, we will give below two corollaries of this theorem. Before, we briefly observe that 
if we choose $V$ satisfying $\mu_0(\ml{C}_V)<1$, then, for every nonempty open subset $\omega$ of $S^*M$, 
one has $\mu(\omega)>0$. In other words, provided the perturbation $V$ satisfies some ``generic'' property with respect to the initial data, then 
the solutions of the perturbed Schr\"odinger equation put mass on every nonempty open subset of $S^*M$ for times of order 
$|\log(\eps_{\hbar})|$. In the case where $\mu(\ml{C}_V)=0$, the limit measure is in fact the Liouville measure. Compared with the quantum ergodicity 
theorem, we emphasize that we do not need to average over the time parameter.

\subsection{Some corollaries}

The following corollary is a direct consequence of the main theorem:

\begin{coro}\label{c:coro1} Suppose that $\text{dim}(M)=2$ and that it has constant negative curvature $K\equiv -1$. Suppose that $\eps_{\hbar}\rightarrow 0$ as $\hbar\rightarrow 0^+$, and that there exists $0<\nu<1/2$ such that for $\hbar>0$ small enough, 
$$\hbar^{\nu}\leq\eps_{\hbar}.$$
Suppose also that
$$\ml{C}_V=\emptyset.$$
Then, for every $1<c_1\leq c_2<\min\{3/2, 1/(2\nu)\}$, one has
$$\ml{M}\left([c_1|\log(\eps_{\hbar})|,c_2|\log(\eps_{\hbar})|]\right)=\{L\}.$$
\end{coro}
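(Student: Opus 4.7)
The plan is to deduce the corollary directly from Theorem~\ref{t:maintheo} via a compactness/subsequence extraction, using the fact that $\ml{C}_V=\emptyset$ collapses the sandwich estimate into an equality regardless of $\mu_0$.

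First I would unpack the definition of $\ml{M}([c_1|\log\eps_\hbar|,c_2|\log\eps_\hbar|])$. Any $\mu$ in this set arises from some sequence $(\psi_\hbar)_{0<\hbar\leq 1}$ satisfying~\eqref{e:hosc}, times $\tau_\hbar\in[c_1|\log\eps_\hbar|,c_2|\log\eps_\hbar|]$, and a subsequence $\hbar_n\to 0^+$ along which $\mu_{\hbar_n}(\tau_{\hbar_n})\to\mu$ in $\ml{D}'(T^*M)$. Since the accumulation points of the Wigner distributions $(\mu_{\hbar_n}(0))_n$ are probability measures carried by $S^*M$, a further diagonal extraction (still denoted $\hbar_n$) produces a subsequence along which $\mu_{\hbar_n}(0)$ converges in $\ml{D}'(T^*M)$ to some probability measure $\mu_0$ on $S^*M$. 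The restricted sequence $(\psi_{\hbar_n})_n$ then has $\mu_0$ as its unique semiclassical measure, and $\mu$ belongs to $\ml{M}(\psi_{\hbar_n},[c_1|\log\eps_{\hbar_n}|,c_2|\log\eps_{\hbar_n}|],\hbar_n\to 0^+)$.

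At this stage I would invoke Theorem~\ref{t:maintheo}. The hypothesis $\ml{C}_V=\emptyset$ forces $\mu_0(\ml{C}_V)=0$, so the two-sided bound degenerates into
\[\int_{S^*M}a\,dL\leq\mu(a)\leq\int_{S^*M}a\,dL\]
for every $a\in\ml{C}^{\infty}(S^*M,\IR)$. Hence $\mu(a)=\int_{S^*M}a\,dL$ for every such $a$, and since $\mu$ is already known to be a probability measure on $S^*M$, this forces $\mu=L$. Non-emptiness of $\ml{M}([c_1|\log\eps_\hbar|,c_2|\log\eps_\hbar|])$ follows from the existence of admissible sequences of initial data (e.g.\ eigenfunctions of $-\hbar^2\Delta_g$ with eigenvalue $1$), so the set equals exactly $\{L\}$.

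There is essentially no obstacle here: the entire analytic content lives in Theorem~\ref{t:maintheo}. The only items to check are that the diagonal extraction preserves the time-window constraint (which is automatic, as the window depends only on $\hbar_n$) and that the assumption $\ml{C}_V=\emptyset$ kills the dependence on the specific initial data through $\mu_0$, which is precisely the reason no hypothesis stronger than~\eqref{e:hosc} is needed in the corollary.
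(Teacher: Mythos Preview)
Your argument is correct and is precisely the intended deduction: the paper presents the corollary as ``a direct consequence of the main theorem'' without writing out a proof, and you have filled in exactly the standard extraction that makes this rigorous. The only cosmetic quibble is that ``eigenvalue $1$'' should read ``eigenvalue nearest to $1$'' (Weyl's law places one within $O(\hbar^2)$ of $1$), but this does not affect anything.
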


\begin{rema}
Observe that the set
$$\ml{C}_V^0:=\left\{(x_0,\xi_0)\in S^*M:f_V(x_0,\xi_0)=0\right\}$$
can never be empty, as the set there always exists $x_0\in M$ such that $d_{x_0}V=0$. In appendix~\ref{a:example}, it is shown that the assumption $\ml{C}_V=\emptyset$ is generic in the sense that it is satisfied on an open and dense subset of $\ml{C}^{\infty}(M,\IR)$.
\end{rema}

In other words, \emph{the solutions of the perturbed Schr\"odinger become equidistributed for time scales of order $|\log(\eps_{\hbar})|$ as 
soon as the potential satisfies some geometric admissibility condition} -- see appendix~\ref{a:example} for the construction of such potentials. This statement is very close to the main theorem of~\cite{EsRi14}. 
In this reference, equidistribution was also shown to hold for times of order 
$|\log(\eps_{\hbar})|$ under perturbation by a ``multi-scaled'' potential which satisfies some geometric admissibility condition. The main 
improvement compared with this reference are the following. First, the geometric assumptions on the potential are simpler, and we do not have to deal 
with a ``multi-scaled perturbation''. Moreover, we do not need to average over subintervals of $[c_1 |\log(\eps_{\hbar})|, c_2|\log(\eps_{\hbar})|]$ to 
obtain equidistribution of the solutions, i.e. equidistribution holds for \emph{any} time in the above interval. Finally, the main result 
from~\cite{EsRi14} holds for a generic $\eps$ in the interval $[0,\hbar^{\nu'}]$, while the present result holds for any large enough $\eps$ -- see proposition~\ref{p:semicl} for a more precise statement. 
Yet, it is important to underline that the restrictions on the energy localization of the initial data are much more restrictive here than 
in~\cite{EsRi14} -- see~\eqref{e:hosc} and~\eqref{e:hosc0}.

As was already mentioned, the main theorem shows that the solutions of the perturbed Schr\"odinger equation put some mass on every nonempty open subset as soon as we know that the initial data do not put all its mass on the set $\ml{C}_V$. The results from~\cite{An08} provide sufficient conditions to ensure this property, modulo the fact that the initial data satisfy the stronger assumption~\eqref{e:hosc-log}. More precisely, we introduce the ``maximal invariant'' subset 
inside $\ml{C}_V$, i.e.
$$\Lambda_V:=\bigcap_{t\in\IR}G_0^t\ml{C}_V,$$
and combining our main result to~\cite{An08}, we obtain the following corollary:

\begin{coro}\label{c:coro2} Suppose that $\text{dim}(M)=2$ and that it has constant negative curvature $K\equiv -1$. Suppose that $\eps_{\hbar}\rightarrow 0$ as $\hbar\rightarrow 0$, and that there exists $0<\nu<1/2$ such 
$$\hbar^{\nu}\leq\eps_{\hbar}.$$
Suppose that the 
topological entropy\footnote{We refer the reader to~\cite{KaHa} for a precise definition.} of $\Lambda_V$ satisfies
$$h_{top}(\Lambda_V)<\frac{1}{2}.$$
Then, for every $1<c_1\leq c_2<\min\{3/2, 1/(2\nu)\}$, for every  $\mu\in\ml{M}_{\log}\left([c_1|\log(\eps_{\hbar})|,c_2|\log(\eps_{\hbar})|]\right)$, and for every nonempty open subset $\omega$ in $S^*M$, one has
$$\mu(\omega)>0.$$
\end{coro}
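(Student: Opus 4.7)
The plan is to combine Theorem~\ref{t:maintheo} with the Kolmogorov--Sinai entropy lower bound of Anantharaman~\cite{An08} for semiclassical measures of logarithmic quasi-modes, using the variational principle to prevent the initial measure $\mu_0$ from being concentrated on $\ml{C}_V$.

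Let $\mu\in\ml{M}_{\log}([c_1|\log\eps_{\hbar}|,c_2|\log\eps_{\hbar}|])$, produced from initial data $(\psi_{\hbar_n})$ satisfying~\eqref{e:hosc-log} and times $\tau_{\hbar_n}\in[c_1|\log\eps_{\hbar_n}|,c_2|\log\eps_{\hbar_n}|]$. Up to extracting a subsequence, I may assume that $\mu_{\hbar_n}(0)$ converges in $\ml{D}'(T^*M)$ to some probability measure $\mu_0$ on $S^*M$; since~\eqref{e:hosc-log} is strictly stronger than~\eqref{e:hosc}, Theorem~\ref{t:maintheo} applies and yields, for every $a\in\ml{C}^{\infty}(S^*M,\IR)$,
$$\mu(a)\geq \mu_0(\ml{C}_V)\min a+(1-\mu_0(\ml{C}_V))\int_{S^*M}a\,dL,$$
while~\eqref{e:hosc-log} also ensures that $\mu_0$ is invariant under the geodesic flow. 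The crux is to show that $\mu_0(\ml{C}_V)<1$. Assume for contradiction that $\mu_0(\ml{C}_V)=1$. The set $\ml{C}_V$ is closed as a countable intersection of zero sets of smooth functions, so by invariance $\mu_0(G_0^t\ml{C}_V)=1$ for every $t\in\IR$; taking the countable intersection over $t\in\mathbb{Q}$ and using continuity of the flow together with the closedness of $\ml{C}_V$, one identifies this intersection with $\Lambda_V$, so $\mu_0(\Lambda_V)=1$. On the other hand, thanks to~\eqref{e:hosc-log}, the result of~\cite{An08} applies and in constant curvature $-1$ on a surface yields $h_{KS}(\mu_0)\geq 1/2$. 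Since $\mu_0$ is an invariant probability measure carried on the compact invariant set $\Lambda_V$, the variational principle gives $h_{KS}(\mu_0)\leq h_{top}(\Lambda_V)<1/2$, a contradiction. Hence $\mu_0(\ml{C}_V)<1$.

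To conclude, given a nonempty open $\omega\subset S^*M$, choose $a\in\ml{C}^{\infty}(S^*M,\IR)$ with $0\leq a\leq \mathbf{1}_{\omega}$ and $a\not\equiv 0$; then $\int_{S^*M}a\,dL>0$ because the Liouville measure is positive on nonempty open sets, and since $\min a\geq 0$ the inequality above gives $\mu(\omega)\geq \mu(a)\geq (1-\mu_0(\ml{C}_V))\int_{S^*M}a\,dL>0$. The main obstacle is the entropic step: one must verify that~\eqref{e:hosc-log} precisely matches the quasi-mode regularity hypothesis of~\cite{An08} and that in constant curvature $-1$ the resulting lower bound on $h_{KS}(\mu_0)$ is sharp enough to yield $h_{KS}(\mu_0)\geq 1/2$, so that the assumption $h_{top}(\Lambda_V)<1/2$ triggers the contradiction through the variational principle. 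The small measure-theoretic identification $\bigcap_{t\in\mathbb{Q}} G_0^t\ml{C}_V=\Lambda_V$ is routine once the closedness of $\ml{C}_V$ is observed.
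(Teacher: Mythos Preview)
Your proposal is correct and follows essentially the same approach as the paper: the paper does not write out an explicit proof of this corollary, but the surrounding discussion makes clear that one combines Theorem~\ref{t:maintheo} with the entropy lower bound of~\cite{An08} (yielding $h_{KS}(\mu_0)\geq 1/2$ in constant curvature $-1$ for $o(\hbar|\log\hbar|^{-1})$-quasimodes) and the variational principle to force $\mu_0(\ml{C}_V)<1$, exactly as you do. Your write-up simply fills in the details the paper leaves implicit, including the reduction from $\ml{C}_V$ to $\Lambda_V$ via flow-invariance of $\mu_0$.
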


Thanks to corollary $4$ from~\cite{BaWo07}, one knows that $h_{top}(\Lambda_V)<\frac{1}{2}$ is satisfied as soon as the Hausdorff dimension of $\Lambda_V$ is $<2$. We also recall that, 
$$\Lambda_V\subset\Lambda_V^0:=\bigcap_{t\in\IR}G_0^t\ml{C}_V^0\subset\ml{C}_V^0\subset S^*M,$$ 
and we remark that, for a generic choice of $V$ (say $V$ has finitely many critical points), 
one has $\text{dim}_H(\ml{C}_V^0)=2$.

\subsection{Organization of the article}

In section~\ref{s:geom-back}, we briefly recall some properties of geodesic flows on negatively curved surfaces. In section~\ref{s:semiclassical}, we use semiclassical tools to reduce the proof of theorem~\ref{t:maintheo} to a question on hyperbolic dynamical systems. In section~\ref{s:dynamical}, we solve this dynamical systems question using strong structural stability and unique ergodicity of the horocycle flow. In section~\ref{s:loschmidt}, we apply our method to the study of the quantum Loschmidt echo. In appendix~\ref{a:pdo}, we give a short toolbox on semiclassical analysis, and 
in appendix~\ref{a:stability}, we recall the strong structural stability theorem and we give a brief account on the results from~\cite{EsRi14} that we will use in this article. Finally, appendix~\ref{a:example} provides a large class of potentials satisfying the assumptions of corollary~\ref{c:coro1}.

In all the article, $M$ will denote a smooth ($\ml{C}^{\infty}$), connected, orientable, compact and Riemannian manifold 
without boundary.

\section{Properties of geodesic flows on negatively curved surfaces}
\label{s:geom-back}

In all this article, we make the additional assumption that $M$ is a surface with constant sectional curvature $K\equiv -1$. We will now draw some dynamical consequences of this geometric assumption. We refer to~\cite{KaHa, Rug07} for a more detailed exposition.

In this geometric setting, the geodesic flow\footnote{In this case, $(G_0^t)_{t\in\IR}$ is the Hamiltonian flow associated to $p_0$.} $(G_0^t)_{t\in\IR}$ satisfies the Anosov property 
on $S^*M$~\cite{Ano67}. Precisely, it means that, for every $\rho=(x,\xi)$ in $S^*M$, there exists a $G_0^t$-invariant splitting
\begin{equation}\label{e:anosov}
T_{\rho}S^*M=\IR X_0(\rho)\oplus E^s(\rho)\oplus E^u(\rho),
\end{equation}
where $X_0(\rho)$ is the Hamiltonian vector field associated to $p_0(x,\xi)=\frac{\|\xi\|_x^2}{2}$, $E^u(\rho)$ is the unstable direction and $E^s(\rho)$ is 
the stable direction. These three subspaces are preserved under the geodesic flow and there exist  constants $C_0>0$ and $\gamma_0>0$ such that, for any 
$t\geq 0$, for any $v^s\in E^s(\rho)$ and any $v^u\in E^u(\rho)$,
$$\|d_{\rho}G_0^tv^s\|_{G^t_0(\rho)}\leq C_0e^{-\gamma_0 t}\|v^s\|_{\rho}\ \text{and}\ \|d_{\rho}G_0^{-t}v^u\|_{G^{-t}_0(\rho)}\leq C_0e^{-\gamma_0 t}\|v^u\|_{\rho},$$
where $\|.\|_w$ is the norm associated to the Sasaki metric on $S^*M$~\cite{Rug07}.

Moreover, as $\text{dim}( M)=2$, these three subspaces are $1$-dimensional subspaces of $T_{\rho}S^*M.$ As explained in section $3$ of~\cite{EsRi14}, one can associate a direct orthonormal basis to this splitting that we denote by $(X_0(\rho), X^s(\rho), X^u(\rho))$. 
\begin{rema} We emphasize that we make a slightly different choice of convention compared with~\cite{EsRi14}. In this reference, the stable and unstable vector fields $X^s$ and $X^u$ were chosen to be of norm $\sqrt{2}$. Here, we will use the convention that they are unit vectors for the Sasaki metric on $S^*M$.
\end{rema}
Recall from Chapter $3$ of~\cite{Rug07} (see also section $3$ of~\cite{EsRi14}) that there exists some $C_0>0$ such that, for every $\rho$ in $S^*M$ and for every $t$ in $\IR$,
\begin{equation}\label{e:exp-rate}
\|d_{\rho}G_0^t\|\leq C_0 e^{t}.
\end{equation}

In our proof, we will use two main properties of geodesic flows on negatively curved surfaces, namely
\begin{itemize}
 \item strong structural stability -- see appendix~\ref{a:stability} for a brief reminder of this property~\cite{Ano67, dLLMM86};
 \item unique ergodicity of the horocycle flow~\cite{Fu73, Mar77}.
\end{itemize}
We conclude this preliminary section by a brief reminder on the ergodic properties of horocycle flows on negatively curved surfaces~\cite{Mar77}. Thanks to the fact 
that we are considering negatively curved surfaces, one knows that $X^u$ defines a $\ml{C}^1$ vector field on $S^*M$~\cite{PuHi75}. The unstable horocycle flow is 
then defined as the $\ml{C}^1$ flow $(H^s_{u})_{s\in\IR}$ satisfying
$$\forall\rho\in S^*M,\ \ \frac{d}{ds}\left(H_u^s(\rho)\right)=X_u\circ H^s_u(\rho).$$
Recall from~\cite{Mar77} that this parametrization of the horocycle flow is uniformly expanding in the sense that
$$\forall\rho\in S^*M,\ \forall (t,\tau)\in\IR^2,\ G_0^t\circ H_u^{\tau}(\rho)=H_u^{e^t\tau}\circ G_0^t(\rho).$$
Moreover, this flow is uniquely ergodic~\cite{Fu73, Mar77}:
\begin{theo} Suppose that $\dim(M)=2$, and that $M$ has constant negative sectional curvature $K\equiv -1$. One has, for every $a$ in $\ml{C}^0(S^*M)$,
$$\lim_{T\rightarrow+\infty}\sup\left\{\left|\frac{1}{T}\int_0^T a\circ H^s_u(\rho)ds-\int_{S^*M}adL\right|:\rho\in S^*M\right\}=0,$$
where $L$ is the desintegration of the Liouville measure on $S^*M$. 
\end{theo}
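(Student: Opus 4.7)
The plan is to deduce the stated uniform convergence from unique ergodicity of the horocycle flow via Oxtoby's uniform ergodic theorem: on a compact metric space, a continuous flow with a \emph{unique} invariant Borel probability measure automatically has uniformly convergent Birkhoff averages for every continuous function. Since $L$ is already $H_u^s$-invariant, the whole task reduces to showing that $L$ is the \emph{only} Borel probability measure on $S^*M$ invariant under the horocycle flow.

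For this uniqueness, I would identify $S^*M$ with the homogeneous space $\Gamma\backslash G$ for $G=\operatorname{PSL}_2(\IR)$ and $\Gamma$ a cocompact lattice; the geodesic and horocycle flows become right actions of the diagonal and upper-unipotent one-parameter subgroups, and $L$ becomes normalized Haar measure. The algebraic lever is the renormalization identity $G_0^t\circ H_u^\tau=H_u^{e^t\tau}\circ G_0^t$ already recorded in the paper. Following Margulis, I would first establish an \emph{equidistribution-of-horocycle-pieces} statement: for every smooth $f$ on $\Gamma\backslash G$ and every base point $\rho$, when a fixed arc of unstable horocycle through $\rho$ is pushed by $G_0^t$ it unfolds, via the renormalization identity, into a long $H_u$-Birkhoff average of $f$, and mixing of the geodesic flow with respect to $L$ forces this average to converge to $\int f\,dL$ uniformly in $\rho$.

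Given any $H_u$-invariant Borel probability measure $\mu$, if $\mu\neq L$ there would exist a continuous $f$ with $\int f\,d\mu\neq\int f\,dL$. An ergodic decomposition of $\mu$ together with Birkhoff's theorem would then produce a point $\rho$ whose horocycle Birkhoff averages of $f$ converge to $\int f\,d\mu$, directly contradicting the preceding equidistribution statement. Hence $\mu=L$, and unique ergodicity is established. Combined with the Oxtoby reduction from the first paragraph this yields the theorem.

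The main obstacle is the equidistribution step itself, which rests on mixing of $G_0^t$ with respect to $L$, a standard consequence of Howe--Moore decay of matrix coefficients in the regular representation of $G$ on $L^2(\Gamma\backslash G,L)$ (or equivalently of the Hopf argument using absolute continuity of the stable and unstable foliations of $G_0^t$). It is precisely here that constant negative curvature enters in an essential quantitative way and cannot be replaced by soft compactness arguments; the remaining ingredients (the Oxtoby reduction, the group-theoretic dictionary, and the bookkeeping with the renormalization identity) are standard.
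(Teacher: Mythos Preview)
The paper does not provide its own proof of this theorem; it is stated as a known result and attributed to Furstenberg~\cite{Fu73} and Marcus~\cite{Mar77}. So there is no in-paper argument to compare against directly, only the cited literature.

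Your proposed route is sound and is one of the standard approaches, essentially the Margulis thickening argument: reduce the uniform statement to unique ergodicity via Oxtoby, and prove unique ergodicity by showing that long horocycle arcs equidistribute towards $L$, using mixing of the geodesic flow together with the renormalization identity. This is genuinely different from Furstenberg's original proof, which does not pass through mixing of the geodesic flow at all. Furstenberg argues directly with the action of the upper-triangular subgroup $P\subset\operatorname{PSL}_2(\IR)$: since the diagonal normalizes the unipotent, the geodesic flow acts on the (compact, convex) set of $H_u$-invariant probability measures, and a minimality argument forces any $H_u$-invariant measure to be $P$-invariant; on a cocompact quotient $\Gamma\backslash G$ the only $P$-invariant probability is Haar. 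Your mixing-based approach has the advantage of generalizing to variable negative curvature (this is precisely Marcus's contribution in~\cite{Mar77}), whereas Furstenberg's is cleaner in the homogeneous setting and needs no spectral input.

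One point in your sketch deserves to be made explicit. Mixing of $G_0^t$ is an $L^2$ statement, $\langle f\circ G_0^t,g\rangle_{L^2(L)}\to\int f\,dL\int g\,dL$, not a pointwise one; passing from this to ``$\frac{1}{b}\int_0^b f\circ G_0^t\circ H_u^\tau(\rho)\,d\tau\to\int f\,dL$ for \emph{every} $\rho$'' requires the thickening trick: replace the horocycle arc through $\rho$ by a small flow-box in the stable, flow and unstable directions, apply mixing with $g$ the normalized indicator of the box, and use that the stable and flow directions are not expanded by $G_0^t$, so the box average stays uniformly close to the arc average. You correctly flag the equidistribution step as the main obstacle, but your sketch does not indicate how the $L^2$ mixing statement is converted into a pointwise one; this is the only substantive missing ingredient.
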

As mentioned above, this property will be at the heart of our proof. In fact, it was shown in~\cite{Mar77} (lemma $4.5$) that this result 
implies that, for every $a$ in $\ml{C}^0(S^*M)$ and every $b>0$,
\begin{equation}\label{e:mixing}
 \lim_{s\rightarrow+\infty}\frac{1}{b}\int_0^b a\circ H_u^s\circ G_0^t(\rho)dt=\int_{S^*M} a dL,
\end{equation}
uniformly in $\rho$. This property exactly says that small pieces of geodesics become equidistributed under the action of the horocycle flow, and it is the central 
step in the proof of strong mixing for horocycle flows given in~\cite{Mar77}. After a reduction based on semiclassical techniques, we will have to 
understand some ergodic properties of perturbed geodesic flows to complete the proof of our main results. This will be done in section~\ref{s:dynamical}, 
and, even if the proof given there does not concern directly the horocycle flow, 
it will be modeled on a similar strategy as the proof of~\eqref{e:mixing} given in~\cite{Mar77}. The main difference is that we will replace $H_u^s$ 
by a perturbation $\tilde{G}_{\eps}^s$ of the geodesic flow and that we will need to prove that, in a certain regime of $b$, $\eps$ and $s$, 
it behaves like the unstable horocycle flow. 

In order to clarify the proof given in section~\ref{s:dynamical}, we briefly recall from~\cite{Mar77} how one can 
derive~\eqref{e:mixing} from the unique ergodicity of the horocycle flow. Let $\eta>0$. It is sufficient to prove that, for $b>0$ small enough, there exists 
$s_0(b,\eta)$ such that, for every $s\geq s_0(b,\eta)$, the average $\frac{1}{b}\int_0^b a\circ H_u^s\circ G_0^t(\rho)$ is within $\eta$ of $\int_{S^*M}adL$.

We write that
$$\frac{1}{b}\int_0^b a\circ H_u^s\circ G_0^t(\rho)dt=\frac{1}{b}\int_0^b a\circ G_0^t\circ H_u^{se^{-t}}(\rho)dt
=\frac{1}{b}\int_0^b a\circ H_u^{se^{-t}}(\rho)dt+\ml{O}(b).$$
Then, we make the change of variables $\tau=se^{-t}$ and we get
$$\frac{1}{b}\int_0^b a\circ H_u^s\circ G_0^t(\rho)dt=-\frac{1}{b}\int_s^{se^{-b}} a\circ H_u^{\tau}(\rho)\frac{d\tau}{\tau}+\ml{O}(b).$$
This quantity looks very much like a Birkhoff average, except that we have a Jacobian factor in the integral. In order to deal with this term, we apply the mean value 
Theorem and we get
$$\frac{1}{b}\int_0^b a\circ H_u^s\circ G_0^t(\rho)dt=-\frac{e^{-b}-1}{be^{-t_0}}\frac{1}{se^{-b}-s}\int_s^{se^{-b}} a\circ H_u^{\tau}(\rho)d\tau+\ml{O}(b),$$
for some $t_0$ in $[0,b]$. We have that $\frac{1-e^{-b}}{be^{-t_0}}=1+\ml{O}(b)$, which implies that
$$\frac{1}{b}\int_0^b a\circ H_u^s\circ G_0^t(\rho)dt=\frac{1}{se^{-b}-s}\int_s^{se^{-b}} a\circ H_u^{\tau}(\rho)d\tau+\ml{O}(b).$$
We can now apply unique ergodicity of the horocycle flow, and we find that there exists a nonincreasing function $r(T)\rightarrow 0$ as $T\rightarrow +\infty$ such that
$$\frac{1}{b}\int_0^b a\circ H_u^s\circ G_0^t(\rho)dt=\int_{S^*M}adL+\ml{O}(b)+r(s(1-e^{-b}))=\int_{S^*M}adL+\ml{O}(b)+o((sb)^{-1}),$$
which implies our result.

\section{Reduction to classical dynamics}
\label{s:semiclassical}

In this section, we consider a slightly more general setting than the one in the introduction in order to allow more general class of initial data. We fix a sequence $(\eps_{\hbar})_{0<\hbar\leq 1}$ which satisfies $\eps_{\hbar}\rightarrow 0$, as $\hbar\rightarrow 0^+$ and which represents the ``strength'' of our perturbation. We define then \emph{admissible} sequences of initial data of order $\nu_0>0$ as follows:
\begin{equation}\label{e:hosc2}
\lim_{\hbar\rightarrow 0^+}\left\Vert \mathbf{1}_{\left[  1-\hbar\eps_{\hbar}^{-\nu_0},1+\hbar\eps_{\hbar}^{-\nu_0}\right]
}\left(  -\hbar^{2}\Delta\right)  \psi_{\hbar}-\psi_{\hbar}\right\Vert _{L^{2}\left(  M\right)
}=0,\ \text{and},\ \forall\ 0<\hbar\leq 1,\ \|\psi_{\hbar}\|_{L^2}=1,
\end{equation}
where $\nu_0>0$ is some positive constant.

\begin{rema}\label{r:general-data}
For any choice of $(\eps_{\hbar})_{0<\hbar\leq 1}$, assumption~\eqref{e:hosc2} allows to consider sequences of normalized initial data satisfying 
\begin{equation}\label{e:Rh-initial}
\limsup_{\hbar\rightarrow 0^+}\left\Vert \mathbf{1}_{\left[  1-R\hbar,1+R\hbar\right]
}\left(  -\hbar^{2}\Delta\right)  \psi_{\hbar}-\psi_{\hbar}\right\Vert _{L^{2}\left(  M\right)
}\longrightarrow 0,\ \ \ \text{as}\ \ \ \ \ R\rightarrow+\infty.\end{equation}
We observe that this remark combined to proposition~\ref{p:semicl} below allows to deduce our main statement, i.e. theorem~\ref{t:maintheo}.

\end{rema}

For every $\eps\in[0,\eps_{\hbar}]$, we introduce the following self-adjoint deformation of the Schr\"odinger operator:
$$\hat{P}_{\eps}(\hbar):=-\frac{\hbar^2\Delta_g}{2}+\eps V,$$
where $V$ belongs to $\ml{C}^{\infty}(M,\IR)$. We will denote by $(G_{\eps}^t)_{t\in\IR}$ the Hamiltonian flow associated to $p_{\eps}(x,\xi):=p_0(x,\xi)+\eps V(x)$.

Our goal is to study the long time dynamics of the corresponding quantum propagator 
$U_{\hbar,\eps}(t):=e^{-\frac{i t\hat{P}_{\eps}(\hbar)}{\hbar}}$ on initial data satisfying~\eqref{e:hosc2}, and for $\eps$ large enough belonging to
$[0,\eps_{\hbar}]$. For that purpose, we introduce a sequence of time scales $(\tau_{\hbar})_{0<\hbar\leq 1}$ which satisfies $\tau_{\hbar}\rightarrow +\infty$ as $\hbar\rightarrow 0^+$.

\begin{rema} In the following, our results will concern scales of times $\tau_{\hbar}$ of order $|\log(\eps_{\hbar})|$, under the assumption that $\eps_{\hbar}\gg\sqrt{\hbar}$. In our geometric context, it exactly means that we consider a regime where the semiclassical approximation is valid, i.e. below the Ehrenfest time. In fact, in our setting, the Ehrenfest time in the Egorov theorem is known to be of order $|\log\hbar|/2$~\cite{AnNo07, DyGu14} -- see appendix~\ref{a:pdo} for a brief reminder. 
\end{rema}

Given a sequence of normalized initial data $(\psi_{\hbar})_{0<\hbar\leq 1}$ satisfying~\eqref{e:hosc2}, we define
$$u_{\hbar}^{\eps}(\tau_{\hbar}):=e^{-\frac{i \tau_{\hbar}\hat{P}_{\eps}(\hbar)}{\hbar}}\psi_{\hbar},$$
and its ``Wigner distribution'' on $T^*M$, i.e.
\begin{equation}\label{e:wigner-eps}\forall a\in\ml{C}^{\infty}_c(T^*M),\ \mu_{\hbar}^{\eps}(\tau_{\hbar})(a):=\left\la u_{\hbar}^{\eps}(\tau_{\hbar}),
\Oph(a)u_{\hbar}^{\eps}(\tau_{\hbar})\right\ra_{L^2} 
\end{equation}
The goal of this section is to describe the asymptotic properties of $\mu_{\hbar}^{\eps}(\tau_{\hbar})$ as $\hbar\rightarrow 0^+$, $\tau_{\hbar}\sim|\log(\eps_{\hbar})|$, and $\eps\in[0,\eps_{\hbar}]$ large enough.

\begin{rema}\label{r:sc-measure-initial}
The properties we will obtain will depend on the choice of $V$ and on the properties of the semiclassical measures of the 
initial data.

In all of this section, we will choose $\eps_{\hbar}$ and $\nu_0$ in such a way that $\hbar\eps_{\hbar}^{-\nu_0}\rightarrow 0$ as 
$\hbar\rightarrow 0^+$.  Recall then that, one can extract a subsequence $\hbar_n\rightarrow 0$ such that the sequence of distributions $(\mu_{\hbar_n}^0(0))_{n\in\mathbb{N}}$ converges to a limit distribution which is in fact a probability measure $\mu_0$ carried on the unit cotangent bundle $S^*M$~\cite{Zw12} -- Chapter~$5$. We underline that $\mu_0$ does not have a priori extra properties like invariance by the geodesic flow, even under the stronger assumption~\eqref{e:Rh-initial}.

We will only consider sequences of initial data with an unique semiclassical measure $\mu_0$ and, in order to alleviate the notations, we denote the extraction $\hbar_n\rightarrow 0$ by $\hbar\rightarrow 0^+$.

\end{rema}

Finally, the properties we will obtain are related to the set of critical points of order $J\geq 0$, i.e.
\begin{equation}\label{e:J-critical-points}
\ml{C}_V^J:=\bigcap_{j=0}^{J}\left\{(x_0,\xi_0)\in S^*M:(X_0^j.f_V)(x_0,\xi_0)=0\right\}.
\end{equation}

\subsection{Statement of the main result}

Before stating our main result, we start with a simple observation which follows directly from the results described in paragraph~\ref{ss:antiwick}. Without any 
assumptions on $(\tau_{\hbar})_{0<\hbar\leq 1}$, on $(\eps_{\hbar})_{0<\hbar\leq 1}$ (except that $\eps_{\hbar}\rightarrow 0$) and on 
the geometry of the manifold, we always have
\begin{equation}\label{e:garding}\min_{S^*M}\{a\}
\leq\liminf_{\hbar\rightarrow 0^+,\eps\in[0,\eps_{\hbar}]}\mu_{\hbar}^{\eps}(\tau_{\hbar})(a)
\leq\limsup_{\hbar\rightarrow 0^+,\eps\in[0,\eps_{\hbar}]}\mu_{\hbar}^{\eps}(\tau_{\hbar})(a)
\leq \max_{S^*M}\{a\}.\end{equation}
The proposition below will show that, under proper assumptions on the geometry of the manifold and on $V$, and for strong enough perturbations $(\eps_{\hbar})_{0<\hbar\leq 1}$, one can find a scale of times $(\tau_{\hbar})_{0<\hbar\leq 1}$ for which $(\mu_{\hbar}^{\eps}(\tau_{\hbar})(a))_{\hbar\rightarrow 0^+}$ 
asymptotically belong to a smaller interval. More precisely, we will show

\begin{prop}\label{p:semicl} Suppose that $\dim(M)=2$, and that $M$ has constant negative sectional curvature $K\equiv -1$. Suppose that $\lim_{\hbar\rightarrow 0}\eps_{\hbar}=0$, and that there exists $0<\nu<\frac{1}{2}$ such that, for $\hbar>0$ small enough, one has
$$\eps_{\hbar}\geq \hbar^{\nu}.$$
Let $J$ be a nonnegative integer. Let $\nu_0>0$ and $\nu_1\geq 0$ satisfying
$$1+\nu_1+(3J+1)\nu_0<\min\left\{\frac{3}{2},\frac{1}{2\nu}\right\}.$$
Suppose that there exists $\displaystyle 1+\nu_1+(3J+1)\nu_0<c_1\leq c_2<\min\left\{\frac{3}{2},\frac{1}{2\nu}\right\}$ such that, for $\hbar>0$ small enough, 
$$c_1|\log(\eps_{\hbar})|\leq\tau_{\hbar}\leq c_2|\log(\eps_{\hbar})|.$$

Then, for any normalized sequence $(\psi_{\hbar})_{\hbar\rightarrow 0^+}$ in $L^2(M)$ which satisfies~\eqref{e:hosc2} with $\nu_0$, and which has an unique semiclassical measure $\mu_0$, one has, for every $a$ in $\ml{C}^{\infty}_c(T^*M,\IR)$,

 $$\mu_0(\ml{C}_V^J)\min_{S^*M}\{a\}+(1-\mu_0(\ml{C}_V^J))\int_{S^*M}adL
\leq\liminf_{\hbar\rightarrow 0^+,\eps\in[\eps_{\hbar}^{1+\nu_1},\eps_{\hbar}]}\mu_{\hbar}^{\eps}(\tau_{\hbar})(a)$$
$$\hspace{3cm}\leq\limsup_{\hbar\rightarrow 0^+,\eps\in[\eps_{\hbar}^{1+\nu_1},\eps_{\hbar}]}\mu_{\hbar}^{\eps}(\tau_{\hbar})(a)
\leq \mu_0(\ml{C}_V^J)\max_{S^*M}\{a\}+(1-\mu_0(\ml{C}_V^J))\int_{S^*M}adL.$$
\end{prop}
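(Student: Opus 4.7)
The plan has two stages: a semiclassical reduction via Egorov, followed by a classical equidistribution statement for the perturbed geodesic flow.

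First I would rewrite
\[\mu_{\hbar}^{\eps}(\tau_{\hbar})(a)=\langle \psi_{\hbar}, U_{\hbar,\eps}(-\tau_{\hbar})\Oph(a)U_{\hbar,\eps}(\tau_{\hbar})\psi_{\hbar}\rangle\]
and apply the Egorov theorem for the perturbed Hamiltonian $\hat{P}_{\eps}(\hbar)$. It is valid up to the Ehrenfest time $|\log\hbar|/2$, which comfortably contains $\tau_{\hbar}\leq c_2|\log\eps_{\hbar}|\leq c_2\nu|\log\hbar|$ since $c_2\nu<1/2$; hence the expectation equals $\langle\psi_{\hbar},\Oph(a\circ G_{\eps}^{\tau_{\hbar}})\psi_{\hbar}\rangle+o(1)$. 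Passing to the anti-Wick quantization and invoking G\aa rding, one-sided inequalities on the classical symbol yield one-sided inequalities for the quantum expectation, so the problem reduces to controlling $\int (a\circ G_{\eps}^{\tau_{\hbar}})\,d\mu_0$ modulo negligible error.

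The heart of the proof is the classical statement that, for $\rho$ outside a prescribed neighborhood of $\mathcal{C}_V^J$ and uniformly for $\eps\in[\eps_{\hbar}^{1+\nu_1},\eps_{\hbar}]$, one has $a\circ G_{\eps}^{\tau_{\hbar}}(\rho)\to\int_{S^*M} a\,dL$. The mechanism is the exponential amplification of the perturbation in the unstable direction: using strong structural stability together with a Taylor expansion of $f_V$ up to order $J$ along the orbit, the perturbed flow behaves to leading order as the unperturbed flow composed with a horocycle shift of magnitude comparable to $\eps\,e^{\tau_{\hbar}}\cdot X_0^j f_V(\rho)$ for some $j\leq J$ with nonzero derivative at $\rho$. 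The inequality $c_1>1+\nu_1+(3J+1)\nu_0$ is exactly the book-keeping condition ensuring that this effective shift tends to $+\infty$ uniformly away from $\mathcal{C}_V^J$, after accounting for the range $\eps\in[\eps_{\hbar}^{1+\nu_1},\eps_{\hbar}]$ and the $\nu_0$-thick energy window.

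To convert a diverging horocycle shift into the Liouville integral I would mimic the Furstenberg--Marcus argument reproduced at the end of section~\ref{s:geom-back}: introduce a short averaging window of size $b_{\hbar}$, exploit the commutation $G_0^t\circ H_u^{\tau}=H_u^{e^t\tau}\circ G_0^t$ and a change of variables to rewrite the short geodesic-time average as a horocycle Birkhoff average of length $s_{\hbar}(\rho)(1-e^{-b_{\hbar}})$, and apply the uniform unique ergodicity of $H_u^s$. The window $b_{\hbar}$ must be small enough for the substitution $\tau_{\hbar}\leadsto\tau_{\hbar}+t$ to be harmless in Egorov (this is where $c_2<3/2$ enters, reflecting the H\"older regularity of the weak-unstable foliation and the semiclassical loss of derivatives incurred when passing one-sided inequalities through a composed symbol), and large enough that $s_{\hbar}b_{\hbar}\to+\infty$ uniformly off $\mathcal{C}_V^J$. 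Combining the resulting Liouville contribution on $S^*M\setminus\mathcal{C}_V^J$ with the trivial G\aa rding bound on a neighborhood of $\mathcal{C}_V^J$ whose $\mu_0$-mass approaches $\mu_0(\mathcal{C}_V^J)$ (outer regularity) yields the two-sided estimate of the proposition.

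The main obstacle is the quantitative classical statement that $G_{\eps}^{\tau_{\hbar}}$ equals $G_0^{\tau_{\hbar}}$ composed with a horocycle shift of size $\eps\,e^{\tau_{\hbar}}\cdot X_0^j f_V(\rho)$, plus an error controlled uniformly in $\rho$ and $\eps\in[\eps_{\hbar}^{1+\nu_1},\eps_{\hbar}]$. This should follow from strong structural stability together with the Taylor expansion of $f_V$ along geodesics in the spirit of~\cite{EsRi14}, but the book-keeping required to close all the small parameters $\nu,\nu_0,\nu_1,J$ is delicate and is what drives the precise form of the hypotheses.
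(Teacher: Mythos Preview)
Your outline captures the Egorov reduction, the structural-stability/horocycle mechanism, and the splitting near $\mathcal{C}_V^J$, but there is a genuine gap at the averaging step. After Egorov you have the symbol $a\circ G_{\eps}^{\tau_\hbar}$, a single orbit value at each $\rho$; the pointwise claim ``$a\circ G_{\eps}^{\tau_\hbar}(\rho)\to\int_{S^*M}a\,dL$'' is simply false, and the horocycle/Marcus argument you invoke requires an average, not a point evaluation. Your proposed source of averaging, ``$\tau_\hbar\leadsto\tau_\hbar+t$ harmless in Egorov'', does not work: validity of Egorov at both times does not make $\mu_\hbar^\eps(\tau_\hbar)(a)$ equal to its time-average, and $\psi_\hbar$ is not approximately an eigenstate of $\hat P_\eps(\hbar)$, so $U_{\hbar,\eps}(t)\psi_\hbar$ is not close to $\psi_\hbar$ up to phase on any useful scale.

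The mechanism you are missing is exactly the hypothesis~\eqref{e:hosc2}, which you never use. Because $\psi_\hbar$ lies in a spectral window of width $\hbar\eps_\hbar^{-\nu_0}$ for $-\hbar^2\Delta_g$, the \emph{free} evolution satisfies $e^{is\hbar\Delta_g/2}\psi_\hbar=e^{-is/(2\hbar)}\psi_\hbar+o(1)$ uniformly for $|s|\le\eps_\hbar^{\nu_2}$, any $\nu_2>\nu_0$. Inserting this on both sides and averaging over $s$ gives, after a second short-time Egorov, the symbol $\frac{1}{\eps_\hbar^{\nu_2}}\int_0^{\eps_\hbar^{\nu_2}} a\circ G_\eps^{\tau_\hbar}\circ G_0^s\,ds$; it is this \emph{averaged} symbol that converges to $\int a\,dL$ uniformly on $K_V^J(\eta_0)$ (Proposition~\ref{p:dynamics}). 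The parameter $\nu_0$ in the constraint $1+\nu_1+(3J+1)\nu_0<c_1$ is precisely the size of the averaging window. Two smaller corrections: the bound $c_2<3/2$ comes not from Egorov but from the classical error $\mathcal O(\eps^{1+\gamma}e^{T})$ in the structural-stability lemma, where $\gamma<1/2$ is the H\"older exponent of $\beta_V^u$; and you cannot pass from $\langle\psi_\hbar,\Oph(a\circ G_\eps^{\tau_\hbar})\psi_\hbar\rangle$ to $\int(a\circ G_\eps^{\tau_\hbar})\,d\mu_0$, since the symbol lies in an $\hbar$-dependent class---positivity must be used to compare with $\hbar$-independent bounds (the constant $\int a\,dL$ on the good set, $\min a$ or $\max a$ on the bad set) \emph{before} taking the semiclassical limit.
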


\begin{rema}
This result implies Theorem~\ref{t:maintheo} when we restrict ourselves to sequences of initial data satisfying~\eqref{e:Rh-initial} -- see remark~\ref{r:general-data}. 
\end{rema}

\begin{rema}\label{r:homogen-0} Our assumptions on the different parameters impose that $0<\nu_0<\frac{1}{2(3J+1)}\leq\frac{1}{2}$ and $\eps_{\hbar}\geq\sqrt{\hbar}$ asymptotically. In particular, one has $\hbar\eps_{\hbar}^{-\nu_0}\rightarrow 0$ when $\hbar\rightarrow 0^+$, as in the context of remark~\ref{r:sc-measure-initial}.
\end{rema}

The statement of the proposition is a little bit technical as it involves many parameters, and it should be understood as follows. If we suppose that $\eps_{\hbar}\gg\sqrt{\hbar}$, then one can find admissibility conditions on frequencies of the initial data, and a scale of times for which the sequence $(\mu_{\hbar}^{\eps}(\tau_{\hbar})(a))_{\hbar,\eps}$ belongs asymptotically to the interval
$$I_{\mu_0,V,J}(a):=\left[\mu_0(\ml{C}_V^J)\min_{S^*M}\{a\}+(1-\mu_0(\ml{C}_V^J))\int_{S^*M}adL,\mu_0(\ml{C}_V^J)\max_{S^*M}\{a\}+(1-\mu_0(\ml{C}_V^J))\int_{S^*M}adL\right],$$
which is a subinterval of $[\min a,\max a]$ that appeared in~\eqref{e:garding}. The interval is smaller as soon as $\mu_0$, $V$ and $J$ satisfy $\mu_0(\ml{C}_V^J)<1$. In the case where they satisfy $\mu_0(\ml{C}_V^J)=0$, then the matrix elements converge in fact to $\int_{S^*M}adL.$ 

\begin{rema}
We underline that, if we fix a normalized sequence $(\psi_{\hbar})_{\hbar\rightarrow 0^+}$ with an unique semiclassical measure $\mu_0$, then one has $\mu_0(\ml{C}_V^{J+1})\leq\mu_0(\ml{C}_V^J)$ and thus $I_{\mu_0,V,J+1}(a)\subset I_{\mu_0,V,J}(a)$. Yet, one has to be careful as the condition on $(\psi_{\hbar})_{\hbar\rightarrow 0^+}$ becomes more restrictive as we increase the parameter $J$. For instance, given a normalized sequence $(\psi_{\hbar})_{\hbar\rightarrow 0^+}$, it could happen that the proposition could be applied for some $J$ but not necessarly for $J+1$. 
\end{rema}

This result is very close to theorem $4.1$ from~\cite{EsRi14}, and the main improvement compared with this reference is that \emph{we do not need to average over the perturbation parameter} $\eps\in[0,\eps_{\hbar}]$ in order to get an equidistribution property. However, we have to make a much stronger restriction on the family of initial data as it was only required in~\cite{EsRi14} that
$$\forall\delta_0>0,\ \lim_{\hbar\rightarrow 0^+}\left\Vert \mathbf{1}_{\left[  1-\delta_0,1+\delta_0\right]
}\left(  -\hbar^{2}\Delta\right)  \psi_{\hbar}-\psi_{\hbar}\right\Vert _{L^{2}\left(  M\right)
}=0,$$
which is obviously a much weaker assumption than~\eqref{e:hosc2}.

\subsection{Preliminary observations}
\label{sss:homogen}

Thanks to the frequency assumption~\eqref{e:hosc2} and to Remark~\ref{r:homogen-0}, we can suppose without loss of generality that $a$ is homogeneous in a 
neighborhood of $S^*M$, i.e. there exists $0<\delta_0<1/2$ such that
\begin{equation}\label{e:homogeneity}
\forall (x,\xi)\ \text{satisfying}\ p_0(x,\xi)\in[1/2-\delta_0,1/2+\delta_0],\ a(x,\xi)=a\left(x,\frac{\xi}{\|\xi\|}\right).
\end{equation}
Without loss of generality, we can also suppose that
\begin{equation}\label{e:quasimode}
 \mathbf{1}_{\left[  1-\eps_{\hbar}^{-\nu_0}\hbar,1+\eps_{\hbar}^{-\nu_0}\hbar\right]}\left(  -\hbar^{2}\Delta\right)  \psi_{\hbar}=\psi_{\hbar}.
\end{equation}
As mentionned in remark~\ref{r:sc-measure-initial}, the semiclassical measure $\mu_0$ has a priori no invariance properties under such general assumptions. Still, we can observe invariance for very short scales of times in $\hbar$. In fact, according to~\eqref{e:quasimode}, one has
\begin{equation}\label{e:evol-quasimode}
e^{-\frac{it(-\hbar^2\Delta_g-1)}{2\hbar}}\psi_{\hbar}=\psi_{\hbar}+\ml{O}(|t|\eps_{\hbar}^{-\nu_0}).
\end{equation}
In particular, for every $\nu_2>\nu_0$, one has
\begin{equation}\label{e:ev-qm}
\forall |t|\leq\eps_{\hbar}^{\nu_2},\ e^{\frac{it\hbar\Delta_g}{2}}\psi_{\hbar}=e^{-\frac{it}{2\hbar}}\psi_{\hbar}+o(1),
\end{equation}
where the remainder is uniform for $t$ in this interval.

\begin{rema} Equality~\eqref{e:ev-qm} is crucial in our proof of proposition~\ref{p:semicl}. In fact, our argument will make use of an equidistribution result, and we will need to average over some parameter in order to use this equidistribution property. In~\cite{EsRi14}, the averaging was performed over the perturbation parameter $\eps\in[0,\eps_{\hbar}]$. Here, we will take advantage of the fact that the initial data satisfies a stronger spectral localization, and we will use~\eqref{e:ev-qm} in order to average over a time parameter $t\in[0,\eps_{\hbar}^{\nu_2}]$.
 
\end{rema}

\subsection{Proof of proposition~\ref{p:semicl}} 

\label{sss:proof-semicl}

The proof of this proposition can be divided in two main steps: $(1)$ we apply semiclassical 
rules in order to reduce ourselves to a question on ergodic properties of geodesic flows; $(2)$ we use tools from hyperbolic dynamical systems in order to answer this ``dynamical systems'' question. This paragraph is devoted to the first step, and the proof of the second step is postponed to section~\ref{s:dynamical}.

Let $J$, $\nu$, $\nu_0$, $\nu_1$, $c_1$ and $c_2$ be as in the statement of proposition~\ref{p:semicl}. Recall that $\eps_{\hbar}\geq\hbar^{\nu}$ and that$$c_1|\log(\eps_{\hbar})|\leq\tau_{\hbar}\leq c_2|\log(\eps_{\hbar})|,$$
for $\hbar>0$ small enough. Let $a$ be an element in $\ml{C}^{\infty}_c(T^*M,\IR)$ as in paragraph~\ref{sss:homogen}, i.e. which is $0$-homogeneous in a neighborhood of size $\delta_0$ of $S^*M$. We underline that it is sufficient to prove the lower bound as the upper bound can then be obtained by replacing $a$ by $-a$.

\subsubsection{Truncation in phase space}

Let $0<\delta<\delta_0/4$. We introduce $0\leq \chi_{\delta}\leq 1$ a smooth function on $\IR$ which is equal to $1$ on the interval $[(1-\delta)/2,(1+\delta)/2]$ and $0$ outside some interval $[1/2-\delta,1/2+\delta]$. Thanks to~\eqref{e:quasimode}, 
we can write
$$\psi_{\hbar}=\chi_{\delta}(\hat{P}_0(\hbar))\psi_{\hbar}+o(1).$$
Recall that the operator $\chi_{\delta}(\hat{P}_{\eps}(\hbar))$ is a $\hbar$-pseudodifferential operator in $\Psi^{-\infty,0}(M)$ with principal symbol $\chi_{\delta}\circ p_{\eps}(x,\xi)$~\cite{Zw12} (Ch.~$14$) and with the semi-norms which are uniformly bounded for $\eps\in[0,\eps_{\hbar}]$. Then, thanks to the Calder\'on-Vaillancourt theorem~\cite{Zw12}, we find that, uniformly for 
$\eps\in[0,\eps_{\hbar}]$,
$$\psi_{\hbar}=\chi_{\delta}(\hat{P}_{\eps}(\hbar))\psi_{\hbar}+o(1).$$
This implies that, one has
$$\mu_{\hbar}^{\eps}(\tau_{\hbar})(a)=\left\la\psi_{\hbar},
e^{+\frac{i\tau_{\hbar}\hat{P}_{\eps}(\hbar)}{\hbar}}\Oph(a)\chi_{\delta}(\hat{P}_{\eps}(\hbar))e^{-\frac{i\tau_{\hbar}\hat{P}_{\eps}(\hbar)}{\hbar}}\psi_{\hbar}\right\ra 
+o(1),$$
where the remainder is uniform for $\eps\in[0,\eps_{\hbar}]$. We now apply the composition formula for pseudodifferential operators and we find that
\begin{equation}\label{e:step1}\mu_{\hbar}^{\eps}(\tau_{\hbar})(a)=\left\la\psi_{\hbar},
e^{+\frac{it\tau_{\hbar}\hat{P}_{\eps}(\hbar)}{\hbar}}\Oph(a\times\chi_{\delta}\circ p_{\eps})e^{-\frac{it\tau_{\hbar}\hat{P}_{\eps}(\hbar)}{\hbar}}\psi_{\hbar}\right\ra 
+o(1),
\end{equation}
where the remainder is still uniform for $\eps\in[0,\eps_{\hbar}]$.

\subsubsection{Long time Egorov property}

Observe now that, for $\hbar>0$ small enough and $\eps\in[0,\eps_{\hbar}]$, the function $a\times\chi_{\delta}\circ p_{\eps}$ is compactly supported in the energy layer $\{(x,\xi):1/2-2\delta\leq|\xi|^2/2\leq 1/2+2\delta\}$. Recall also that 
$$0\leq \tau_{\hbar}\leq c_2|\log(\eps_{\hbar})|\leq c_2\nu|\log\hbar|,$$
with $c_2\nu<1/2$. Thus, we can choose $0<\delta<\delta_0/4$ small enough in a way that depends only 
on $c_2\nu  \in [0,1/2)$ and such that we can apply Egorov Theorem up to the time $\tau_{\hbar}$ (uniformly for $\eps\in[0,\eps_{\hbar}]$ and $\tau_{\hbar}\leq c_2|\log(\eps_{\hbar})|$) -- see~\eqref{e:large-time-egorov} in the appendix. In other words, we have that, for $0<\delta<\delta_0/4$ small enough, 
$$\mu_{\hbar}^{\eps}(\tau_{\hbar})(a)=\left\la\psi_{\hbar},
\Oph(a\circ G_{\eps}^{\tau_{\hbar}}\times\chi_{\delta}\circ p_{\eps})\psi_{\hbar}\right\ra+o(1),$$
where the remainder is uniform for $\eps\in[0,\eps_{\hbar}]$ and 
$0\leq \tau_{\hbar}\leq c_2|\log(\eps_{\hbar})|$.

\subsubsection{Invariance for short times}

We now use~\eqref{e:ev-qm}, i.e. invariance by the free Schr\"odinger equation over short intervals of times. More precisely, we write that one has
\begin{equation}\label{e:step2}\mu_{\hbar}^{\eps}(\tau_{\hbar})(a)=\frac{1}{\eps_{\hbar}^{\nu_2}}\int_0^{\eps_{\hbar}^{\nu_2}}\left\la\psi_{\hbar},
e^{-\frac{is\hbar\Delta_g}{2}}\Oph(a\circ G_{\eps}^{\tau_{\hbar}}\times\chi_{\delta}\circ p_{\eps})e^{\frac{is\hbar\Delta_g}{2}}\psi_{\hbar}\right\ra ds+o(1),\end{equation}
where $\nu_2>\nu_0$. As discussed in appendix~\ref{a:pdo}, the symbol $a\circ G_{\eps}^{\tau_{\hbar}}\times\chi_{\delta}\circ p_{\eps}$ remains in a class of symbols 
$S^{-\infty,0}_{\nu'}(T^*M)$ with $0\leq c_2\nu<\nu'<1/2$ (with the semi-norms which are uniformly bounded for $\tau_{\hbar}\leq c_2|\log(\eps_{\hbar})|$ 
and $\eps\in[0,\eps_{\hbar}]$). In particular, we can apply Egorov theorem for finite time (here $\eps_{\hbar}^{\nu_2}\rightarrow 0$), and we find that
$$\mu_{\hbar}^{\eps}(\tau_{\hbar})(a)=\left\la\psi_{\hbar},
\Oph\left(\frac{1}{\eps_{\hbar}^{\nu_2}}\int_0^{\eps_{\hbar}^{\nu_2}}a\circ G_{\eps}^{\tau_{\hbar}}\circ G_0^s
\times\chi_{\delta}\circ p_{\eps}\circ G_0^sds\right)\psi_{\hbar}\right\ra +o(1).$$
Using Calder\'on-Vaillancourt Theorem one more time, one gets
$$\mu_{\hbar}^{\eps}(\tau_{\hbar})(a)=\left\la\psi_{\hbar},
\Oph\left(\chi_{\delta}\circ p_{0}\times\frac{1}{\eps_{\hbar}^{\nu_2}}\int_0^{\eps_{\hbar}^{\nu_2}}a\circ G_{\eps}^{\tau_{\hbar}}\circ G_0^s
ds\right)\psi_{\hbar}\right\ra +o(1).$$

\subsubsection{Using ergodic properties of the classical flow}
As all the symbols are compactly supported and as they belong to an admissible class of symbols $S^{-\infty,0}_{\nu'}(T^*M)$ with $0\leq c_2\nu<\nu'<1/2$, we can use the results from paragraph~\ref{ss:antiwick}. It means that wa can replace $\Oph$ by a positive quantization $\Oph^+$ (see~\eqref{e:positive-quantization} for instance), i.e.
$$\mu_{\hbar}^{\eps}(\tau_{\hbar})(a)=\left\la\psi_{\hbar},
\Oph^+\left(\chi_{\delta}\circ p_{0}\times\frac{1}{\eps_{\hbar}^{\nu_2}}\int_0^{\eps_{\hbar}^{\nu_2}}a\circ G_{\eps}^{\tau_{\hbar}}\circ G_0^s
ds\right)\psi_{\hbar}\right\ra +o(1).$$
Fix now $\eta_0>0$ and introduce the following subset of $S^*M$:
$$K_V^J(\eta_0):=\left\{(x,\xi)\in S^*M:\exists 0\leq j\leq J,\ \left|(X_0-\text{Id})^j.f_V\right|\left(x,\xi\right)\geq \eta_0\right\},$$
where we recall that $f_V(x,\xi):=g_x^*(d_xV,\xi^{\perp}).$ We also define the following compact subset of $T^*M$
$$\tilde{K}_V^J(\eta_0,\delta):=\left\{(x,\xi)\in T^*M:\left(x,\frac{\xi}{\|\xi\|}\right)\in K_V^J(\eta_0)\ \text{and}\ p_0(x,\xi)\in[1/2-\delta,1/2+\delta]\right\}.$$
\begin{rema}
 The points in the set $\tilde{K}_V^J(\eta_0,\delta)$ corresponds to the points $(x,\xi)$ in $T^*M$ for which we are able to control \emph{uniformly} the convergence of the integral
$$\frac{1}{\eps_{\hbar}^{\nu_2}}\int_0^{\eps_{\hbar}^{\nu_2}}a\circ G_{\eps}^{\tau_{\hbar}}\circ G_0^s(x,\xi)
ds.$$
We refer to proposition~\ref{p:dynamics} for a precise statement.
\end{rema}

We introduce a smooth cutoff function $\chi_V^{\eta_0}$ which is identically equal to $1$ on $\tilde{K}_V^J(\eta_0,\delta)$ and which vanishes outside 
$\tilde{K}_V^J(\eta_0/2,\delta)$. We define
\begin{equation}\label{e:lower-bound-good-set}A_{\hbar,\eta_0,\delta}(\eps):=\inf\left\{\frac{1}{\eps_{\hbar}^{\nu_2}}\int_0^{\eps_{\hbar}^{\nu_2}}a\circ G_{\eps}^{\tau_{\hbar}}\circ G_0^s(x,\xi)
ds:(x,\xi)\in \tilde{K}_V^J(\eta_0/2,\delta)\right\},\end{equation}
Using these notations and the positivity of $\Oph^+$, we derive that
$$\min_{S^*M}\{a\}\mu_{\hbar}^0(0)(\chi_{\delta}\circ p_{0}(1-\chi_V^{\eta_0}))+A_{\hbar,\eta_0,\delta}(\eps)\mu_{\hbar}^0(0)(\chi_{\delta}\circ p_{0}\chi_V^{\eta_0})
\leq\mu_{\hbar}^{\eps}(\tau_{\hbar})(a).$$
Recall that $a$ is homogeneous in a neighborhood of size $\delta_0>0$ of $S^*M$. Thus, as $G_{\eps}^{\tau_{\hbar}}\circ G_0^s(x,\xi)$ remains in this neighborhood 
for all $(t,s)$ when $1/2-\delta\leq p_0(x,\xi)\leq 1/2+\delta$ (provided we choose $\eps\geq0$ small enough), we can replace $a$ by $\tilde{a}$ in the definition of 
$A_{\hbar,\eta_0,\delta}$, where $\tilde{a}(x,\xi):=a(x,\xi/\|\xi\|)$ for every $(x,\xi)$ in $T^*M-M$. In particular, provided we pick $0<\delta<\delta_0/4$ small enough and $\eps\in[\eps_{\hbar}^{1+\nu_1},\eps_{\hbar}]$, 
we can apply proposition~\ref{p:dynamics} which implies that $A_{\hbar,\eta_0,\delta}(\eps)$ converges to $\int_{S^*M} adL$.

We now take the limit $\hbar\rightarrow 0^+$, and we deduce that
$$\mu_0(K_V^{J}(\eta_0/2)^c)\min_{S^*M}\{a\}+\mu_0(K_V^{J}(\eta_0))\int_{S^*M}adL
\leq\liminf_{\hbar\rightarrow 0,\eps\in[\eps_{\hbar}^{1+\nu_1},\eps_{\hbar}]}\mu_{\hbar}^{\eps}(\tau_{\hbar})(a).$$
This property holds for any $\eta_0>0$. Thus, we finally derive
$$\mu_0(\ml{C}_V^J)\min_{S^*M}\{a\}+\mu_0((\ml{C}_V^J)^c)\int_{S^*M}adL
\leq\liminf_{\hbar\rightarrow 0,\eps\in[\eps_{\hbar}^{1+\nu_1},\eps_{\hbar}]}\mu_{\hbar}^{\eps}(\tau_{\hbar})(a),$$
which concludes the proof of proposition~\ref{p:semicl}.

\section{Perturbations of the geodesic flow}
\label{s:dynamical}

Thanks to the results of section~\ref{s:semiclassical}, the proof of our main result is now reduced to a purely dynamical systems question as it only remains to estimate the quantity $A_{\hbar,\eta_0,\delta}(\eps)$ defined by~\eqref{e:lower-bound-good-set}. Precisely, for a fixed 
$0$-homogeneous $\ml{C}^1$ function $\tilde{a}$ on $T^*M-M$, we need to understand the asymptotic behaviour of
$$I_{x_0,\xi_0}(\eps, b, T):=\frac{1}{b}\int_0^{b}\tilde{a}\circ G_{\eps}^{T}\circ G_0^s(x_0,\xi_0)ds,$$
as $b,\eps\rightarrow 0$ and $T\rightarrow+\infty$. Recall that $(G_{\eps}^t)_{t\in\IR}$ is the Hamiltonian flow associated to the function 
$p_{\eps}(x,\xi):=\frac{\|\xi\|^2_x}{2}+\eps V(x)$. This integral looks very much like the integral involved in~\eqref{e:mixing} except that the unstable horocycle has been 
replaced by a perturbed geodesic flow. The way we will deal with the convergence of $I_{x_0,\xi_0}(\eps, b,T)$ will in fact be very similar to the proof of~\eqref{e:mixing} 
given in section~\ref{s:geom-back}. The additional arguments we will need will be:
\begin{itemize}
 \item the strong structural stability theorem~\cite{Ano67, dLLMM86} which will allow us to ``replace'' the perturbed geodesic flow by a ``reparametrization'' of the horocycle flow;
 \item a theorem due to Cartan~\cite{Ca28} on polynomials which helps us to estimate the size of the Jacobian factor.
\end{itemize}
Fix now $J\geq 0$ and $\eta_0>0$. In order to give our main result on the convergence of $I_{x_0,\xi_0}(\eps, b, T)$, recall that we defined the following subset of $S^*M$:
\begin{equation}\label{e:good-points}K_V^J(\eta_0):=\left\{(x,\xi)\in S^*M:\exists 0\leq j\leq J,\ \left|(X_0-\text{Id})^j.f_V\right|\left(x,\xi\right)\geq \eta_0\right\},\end{equation}
where we set $f_V(x,\xi):=g_x^*(d_xV,\xi^{\perp}).$ Using this convention, we have the following statement:

\begin{prop}\label{p:dynamics}Suppose that $\dim(M)=2$, and that $M$ has constant negative sectional curvature $K\equiv -1$. Let $J\geq 0$, and let $\eta_0>0$. Let 
$\nu_1+(3J+1)\nu_2<1/2$ with $\nu_1\geq 0$ and $\nu_2>0$. Let 
$$1+\nu_1+(3J+1)\nu_2<c_1\leq c_2<\frac{3}{2}.$$
Then, there exists $\delta_1>0$ such that, for every $\ml{C}^1$ function $\tilde{a}$ on $T^*M-M$ which is $0$-homogeneous, one has
$$\lim_{\eps_0\rightarrow 0}\sup_{(*)}\left\{\left|\frac{1}{\eps_0^{\nu_2}}\int_0^{\eps_0^{\nu_2}}\tilde{a}\circ G_{\eps}^{c|\log\eps_0|}\circ G_0^s(x_0,\xi_0)ds
-\int_{S^*M}\tilde{a}dL\right|\right\}=0,$$
where $(*)$ means that we take the supremum over the set
$$\left\{(x_0,\xi_0,c,\eps)\in T_{[1/2-\delta_1,1/2+\delta_1]}^*M\times [c_1,c_2]\times [\eps_0^{1+\nu_1},\eps_0]:\left(x_0,\frac{\xi_0}{\|\xi_0\|}\right)\in K_V^J(\eta_0)\right\},$$
with $T_{[1/2-\delta_1,1/2+\delta_1]}^*M :=\{(x_0,\xi_0)\in T^*M: 1/2-\delta_1\leq p_0(x_0,\xi_0)\leq 1/2+\delta_1\}.$
\end{prop}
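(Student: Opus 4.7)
The strategy is to mimic the argument reproduced at the end of section~\ref{s:geom-back} (the Marcus-style derivation of~\eqref{e:mixing}), with the exact commutation $G_0^t\circ H_u^\tau=H_u^{e^t\tau}\circ G_0^t$ replaced by an approximate analogue tailored to $G_\eps$. The two new ingredients beyond Marcus' argument are strong structural stability of the Anosov flow $G_0$ (appendix~\ref{a:stability}), which lets us view $G_\eps^T$ as $G_0^T$ composed with a controlled horocyclic shift, and Cartan's theorem on the small values of polynomials~\cite{Ca28}, which will replace the explicit monotonicity of $\tau\mapsto e^{-\tau}$ used to handle the Jacobian in the Marcus argument.

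First, for $T=c|\log\eps_0|$ with $1<c<3/2$ and $\eps\in[\eps_0^{1+\nu_1},\eps_0]$, I would establish, uniformly for $\rho$ in a fixed neighbourhood of $S^*M$, an approximate decomposition of the form
\begin{equation*}
G_\eps^T(\rho)=H_u^{\beta_\eps(\rho,T)}\circ G_0^T(\rho)+o(1),\qquad \beta_\eps(\rho,T):=\eps\int_0^T e^{T-\tau}f_V\bigl(G_0^\tau(\rho)\bigr)\,d\tau,
\end{equation*}
via a variation-of-constants analysis in the Anosov basis $(X_0,X^s,X^u)$: the stable component of the Hamiltonian vector field of $V$ contracts and is negligible, the time-direction drift is $O(\eps)$, and the unstable component, which is $f_V\cdot X^u$ up to a constant, is amplified by the expansion factor $e^{T-\tau}$; strong structural stability supplies the global control needed to keep the second-order errors $o(1)$ in the regime $c<3/2$. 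Inserting this decomposition into $I_{x_0,\xi_0}(\eps,\eps_0^{\nu_2},T)$, using the exact commutation $G_0^T\circ H_u^\tau=H_u^{e^T\tau}\circ G_0^T$, and exploiting the $0$-homogeneity of $\tilde a$ reduces the problem to
\begin{equation*}
\frac{1}{\eps_0^{\nu_2}}\int_0^{\eps_0^{\nu_2}}\tilde a\Bigl(H_u^{\alpha(s)}\circ G_0^{T+s}(x_0,\xi_0)\Bigr)\,ds+o(1),\qquad\alpha(s):=\beta_\eps\bigl(G_0^s(x_0,\xi_0),T\bigr).
\end{equation*}
Since $c>1+\nu_1$ and $\eps\geq\eps_0^{1+\nu_1}$, the typical size $|\alpha|\sim\eps e^T\geq\eps_0^{1+\nu_1-c}$ tends to infinity, so that after the change of variables $u=\alpha(s)$ and the mean-value trick used in the Marcus proof to absorb the Jacobian, this quantity becomes a horocyclic Birkhoff average over an interval of length tending to $+\infty$, and unique ergodicity of $H_u^s$ yields the limit $\int_{S^*M}\tilde a\,dL$.

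The main obstacle is the uniform lower bound on $|\alpha'(s)|$ required to justify this change of variables, and this is where the set $K_V^J(\eta_0)$ and Cartan's theorem enter. Setting $\Phi_T(\rho):=\int_0^T e^{-\tau}f_V(G_0^\tau\rho)\,d\tau$ (so that $\alpha(s)=\eps e^T\Phi_T(G_0^s(x_0,\xi_0))$), an integration by parts using $X_0(f_V\circ G_0^\tau)=\partial_\tau(f_V\circ G_0^\tau)$ shows that $(X_0-\text{Id})\Phi_T=-f_V+O(e^{-T})$, whence by iteration $(X_0-\text{Id})^{j+1}\Phi_T=-(X_0-\text{Id})^jf_V+O(e^{-T})$ for every $j\leq J$. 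The hypothesis $(x_0,\xi_0/\|\xi_0\|)\in K_V^J(\eta_0)$ therefore guarantees that at least one of the first $J+1$ derivatives of the smooth function $s\mapsto\Phi_T(G_0^s(x_0,\xi_0))$ at $s=0$ has modulus $\geq\eta_0/2$ for $T$ large. Cartan's theorem~\cite{Ca28}, applied to the Taylor polynomial of degree $\leq J+1$ of this function, then bounds the Lebesgue measure of the sub-level sets $\{s\in[0,\eps_0^{\nu_2}]:|\Phi_T(G_0^s(x_0,\xi_0))|\leq\delta\}$ (and similarly for its derivative) by $C\eta_0^{-1/(J+1)}\delta^{1/(J+1)}\eps_0^{\nu_2}$, up to lower-order corrections. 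One then splits $[0,\eps_0^{\nu_2}]$ into a ``good'' subset $\{|\alpha'|\geq\eps_0^\beta\}$, on which the change of variables and unique ergodicity of $H_u^s$ produce the Liouville average, and its ``bad'' complement, whose contribution is controlled by the Cartan estimate; optimising the exponent $\beta$ against the various powers of $\eps_0$ that appear is precisely what yields the technical hypothesis $1+\nu_1+(3J+1)\nu_2<c$ and closes the argument.
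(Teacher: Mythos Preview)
Your outline is correct and follows essentially the same architecture as the paper's proof: reduce $G_\eps^T\circ G_0^s$ to a reparametrized horocycle orbit via structural stability (the paper's Lemma~\ref{l:stability}), control the sub-level sets of the reparametrization Jacobian by Cartan's theorem (the paper's Lemma~\ref{l:jacob}), and conclude by unique ergodicity of $H_u$. Your drift $\Phi_T(\rho)=\int_0^T e^{-\tau}f_V(G_0^\tau\rho)\,d\tau$ differs from the paper's $\beta_V^u$ only by $O(e^{-T})$, and your identity $(X_0-\mathrm{Id})^{j+1}\Phi_T=-(X_0-\mathrm{Id})^jf_V+O(e^{-T})$ is exactly what the paper obtains by Taylor-expanding $s\mapsto e^{-s}\beta_V^u\circ G_0^s$.

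One point deserves a warning: your phrase ``second-order errors'' is misleading, because a naive Duhamel iteration would give an error $O((\eps e^T)^2)$ and force $c<1$. The actual error produced by structural stability is $O(\eps^{1+\gamma}e^T)$ with $\gamma<1/2$, and this exponent comes from the H\"older regularity of the conjugating homeomorphism in~\eqref{e:inverse-holder} (equivalently, of $\beta_V^u$); that H\"older exponent is precisely what makes $c<3/2$ the right barrier. Similarly, the emergence of $3J+1$ (rather than $J+1$) in the lower constraint on $c$ requires a further decomposition of the good set into sub-intervals $B_k$ of length $b^{1+2J+2\theta}$, short enough that the Jacobian is nearly constant on each, whose images $\tilde B_k$ under the change of variables then have length $\geq b^{1+3J+3\theta}$; your optimisation-in-$\beta$ remark is correct in spirit but hides this bookkeeping.
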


This proposition tells us that small pieces of geodesics become equidistributed under the action of a perturbed geodesic flow provided that the perturbation is 
nontrivial on the small piece we consider. The fact that the perturbation is nontrivial is exactly guaranteed by the fact that we require $(x_0,\xi_0/\|\xi_0\|)$ to be on 
the subset $K_V^J(\eta_0)$. We emphasize that this statement looks very much like the results\footnote{We emphasize that the results are not equivalent, and that they cannot 
a priori be deduced one from the other.} of section~$6$ in~\cite{EsRi14}, more precisely corollary $6.4$. In this reference, instead of averaging over the time parameter $s$, the average was performed over the perturbation parameter $\eps$. Then, the result was established 
as an equidistribution property for similar scales of times, and the admissibility condition on the perturbation involved the nonvanishing of the following integral 
transform:
$$\forall\rho_0=(x_0,\xi_0)\in S^*M,\ \beta^u_V(x_0,\xi_0):=\frac{1}{\sqrt{2}}\int_0^{+\infty}g_{x(\tau)}^*(d_{x(\tau)}V,\xi^{\perp}(\tau))e^{-\tau}d\tau,$$
where $(x(\tau),\xi(\tau)):=G_0^{\tau}(x_0,\xi_0)$. We refer to appendix~\ref{a:stability} to see how this transform appears naturally when we apply the strong structural stability 
theorem.

This function will also play an important role in our proof and our admissibility condition will in fact be related to it. According to~\cite{EsRi14}, this 
function is H\"older continuous for every\footnote{The constant $1/2$ appearing here is the main reason for the factor $3/2(=1+1/2)$ involved in the statement of the 
proposition.} $\gamma<1/2$. Yet, we have more regularity if we look at the direction of the geodesic flow. In fact, one has
$$e^{-s}\beta_V^u\circ G_0^{s}\left(x_0,\xi_0\right)=\beta_V^u\left(x_0,\xi_0\right)-\int_0^{s}e^{-\tau}f_V\circ G_0^{\tau}\left(x_0,\xi_0\right)d\tau,$$
and we can then observe that the map $s\mapsto e^{-s}\beta_V^u\circ G_0^{s}\left(x_0,\xi_0\right)$ is of class $\ml{C}^{\infty}$ (for a fixed choice of $(x_0,\xi_0)$). 
Moreover, the quantities appearing in the definition of $K_V^J(\eta_0)$ are exactly the derivatives of this map at $s=0$.

We describe now more precisely the main stages of the proof:
\begin{enumerate}
 \item in paragraph~\ref{ss:projection}, using homogeneity properties of our problem, we ``project'' everything on $S^*M$;
 \item in paragraph~\ref{ss:stability}, we use the strong structural stability theorem to replace the perturbed geodesic flow by a ``reparametrized'' 
horocycle flow involving the derivatives of the map $s\mapsto e^{-s}\beta_V^u\circ G_0^{s}\left(x_0,\xi_0\right)$;
 \item in paragraph~\ref{ss:unique-erg}, we make use of the unique ergodicity of the horocycle flow to conclude.
\end{enumerate}

The main lines of the proof are very close to the arguments given in section~$6$ of~\cite{EsRi14}; yet, some aspects need a slightly different treatment, especially in steps $(2)$ and $(3)$.

\subsection{Reduction to $S^*M$}\label{ss:projection}

As was already explained, we will first ``project'' on $S^*M$ all the quantities involved in the definition of  $I_{x_0,\xi_0}(\eps, b, T)$. We follow the same procedure 
as in~\cite{EsRi14} (paragraph $5.1$) and we refer to it for the details.

Let $(x_1,\xi_1)$ be an element in a small neighborhood of $S^*M$ and define
$$\Sigma_{x_1,\xi_1}^{\eps}:=\{(x,\xi)\in T^*M:p_{\eps}(x,\xi)=p_{\eps}(x_1,\xi_1)\},$$ 
which is an energy layer for the Hamiltonian $p_{\eps}$. Introduce also the two following diffeomorphisms:
$$\theta^{\eps}_{x_1,\xi_1}:\Sigma_{x_1,\xi_1}^{\eps}\rightarrow S^*M,\ (x,\xi)\mapsto (x,\xi/\|\xi\|),$$
and its inverse
$$\left(\theta^{\eps}_{x_1,\xi_1}\right)^{-1}:S^*M\rightarrow \Sigma_{x_1,\xi_1}^{\eps},\ (x,\xi)\mapsto \left(x,\sqrt{2(p_{\eps}(x_1,\xi_1)-\eps V(x))}\xi\right).$$
Thanks to these two maps, we can define a new flow on $S^*M$, i.e.
$$\varphi_{\eps,x_1,\xi_1}^t=\theta^{\eps}_{x_1,\xi_1}\circ G_{\eps}^{t/\|\xi_1\|}\circ \left(\theta^{\eps}_{x_1,\xi_1}\right)^{-1}.$$
Recall that we can compute explicitely the vector field
\begin{equation}
Y_{x_1,\xi_1}^{\eps}(\rho):=\frac{d}{dt}\left(\varphi_{\eps,x_1,\xi_1}^t(\rho)\right)_{t=0}. 
\end{equation}
associated to this new flow. More precisely, one has
\begin{lemm}\label{l:pert-vf} One has, for every $\rho=(x,\xi)$ in $S^*M$,
\begin{equation}\label{e:newflow}
Y_{x_1,\xi_1}^{\eps}(\rho)
=c_{\eps,x_1,\xi_1}(x)X_0(\rho)+\frac{\eps}{\sqrt{2}\|\xi_1\|c_{\eps,x_1,\xi_1}(x)} g_x^*\left(d_xV,\xi^{\perp}\right)\left(X^s(\rho)-X^u(\rho)\right),
\end{equation}
 where $c_{\eps,x_1,\xi_1}(x):=\sqrt{\frac{p_{\eps}(x_1,\xi_1)-\eps V(x)}{p_0(x_1,\xi_1)}}.$

\end{lemm}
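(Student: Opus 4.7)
My plan is to prove the lemma by a direct pushforward computation. The flow $\varphi^t_{\eps,x_1,\xi_1}$ is the conjugation of $G_\eps^{t/\|\xi_1\|}$ by the diffeomorphism $\theta^\eps_{x_1,\xi_1}$, so its infinitesimal generator at $t=0$ is
\[
Y^\eps_{x_1,\xi_1}(\rho) \;=\; \frac{1}{\|\xi_1\|}\,d\theta^\eps_{x_1,\xi_1}\bigl(X_{p_\eps}(x,\eta)\bigr),
\]
where $(x,\eta) = (\theta^\eps_{x_1,\xi_1})^{-1}(\rho) = (x, r(x)\xi)$ with $r(x) = \sqrt{2(p_\eps(x_1,\xi_1) - \eps V(x))} = \|\xi_1\|\,c_{\eps,x_1,\xi_1}(x)$, and $X_{p_\eps}$ is the Hamiltonian vector field of $p_\eps$ on $T^*M$. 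I would then decompose $X_{p_\eps} = X_{p_0} + \eps X_V$, where $X_V$ is the Hamiltonian vector field of the $\xi$-independent function $V$ and has, in local canonical coordinates, no horizontal part and vertical part $-d_xV$.

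In local coordinates $(x^i,\xi_i)$ on $T^*M$, the relation $\hat\xi_i = \eta_i/r$ yields $\dot{\hat\xi}_i = (\dot\eta_i - \dot r\,\hat\xi_i)/r$ along any curve on $\Sigma^\eps_{x_1,\xi_1}$, where $\dot r = -\eps\,g_x^*(d_xV,\hat\xi)$ is obtained by differentiating the energy constraint $p_\eps = \mathrm{const.}$ Plugging in the explicit expressions for $\dot x^i$ and $\dot\eta_i$ coming from $X_{p_\eps}$ and separating $\eps$-independent and $\eps$-linear contributions, the unperturbed part yields $d\theta^\eps_{x_1,\xi_1}(X_{p_0}) = r\,X_0(\rho)$, which after division by $\|\xi_1\|$ produces the term $c_{\eps,x_1,\xi_1}(x)\,X_0(\rho)$. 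The perturbation part $d\theta^\eps_{x_1,\xi_1}(\eps X_V)$ is purely vertical (fiber-tangent) on $S^*M$; using the $g^*$-orthogonal decomposition $d_xV = g_x^*(d_xV,\hat\xi)\,\hat\xi + g_x^*(d_xV,\hat\xi^\perp)\,\hat\xi^\perp$ available in dimension two, it simplifies to a multiple of the fiber-rotation field $\hat\xi^\perp_i\partial_{\hat\xi_i}$ with coefficient proportional to $\eps\,f_V(x,\hat\xi)/r$.

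The final step is to translate the fiber-rotation generator $\hat\xi^\perp_i\partial_{\hat\xi_i}$ into the Anosov frame. In constant negative curvature $K\equiv -1$, a short computation in the $\mathrm{PSL}(2,\mathbb{R})$-model shows that the horizontal components of $X^s$ and $X^u$ coincide at each point of $S^*M$, so $X^s - X^u$ is purely vertical, and with the Sasaki-unit normalization adopted here one has $X^s(\rho) - X^u(\rho) = \sqrt{2}\,\hat\xi^\perp_i\partial_{\hat\xi_i}$ (up to the orientation convention for $\hat\xi^\perp$). Substituting this identity and using $r = \|\xi_1\|\,c$ yields the stated formula. The main obstacle is pure constant bookkeeping: one must correctly combine the factor $1/\|\xi_1\|$ coming from the time reparametrization with the radial factor $r(x)$ and with the $\sqrt{2}$ arising from the chosen Sasaki-unit normalization of $X^s$ and $X^u$ (in contrast to the $\sqrt{2}$-norm convention used in \cite{EsRi14}).
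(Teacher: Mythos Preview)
Your approach is correct and is precisely the ``direct calculation'' that the paper alludes to (the paper does not give its own proof here but refers to \cite{EsRi14}, lemma~5.2). The pushforward of $\frac{1}{\|\xi_1\|}X_{p_\eps}$ by $d\theta^\eps_{x_1,\xi_1}$, the use of the energy constraint to compute $\dot r$, the orthogonal splitting of $d_xV$ along $\hat\xi$ and $\hat\xi^\perp$, and the identification of the fiber-rotation generator with a multiple of $X^s-X^u$ in constant curvature $-1$ are exactly the steps carried out in the reference; your acknowledgement that the only delicate point is tracking the normalization constant $\sqrt{2}$ (and the factor $\|\xi_1\|$) between the two conventions is well placed.
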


The proof of this lemma follows from a direct calculation and it was given in~\cite{EsRi14} (lemma $5.2$).

\begin{rema} 
We can reestablish the dependence in $(x_1,\xi_1)$ more clearly and write:
$$c_{\eps,x_1,\xi_1}(x)=\sqrt{1+\frac{2}{\|\xi_1\|^2}\eps(V(x_1)-V(x))}=1+\ml{O}_{x,x_1,\xi_1}(\eps).$$
\end{rema}
We can rewrite $I_{x_0,\xi_0}(\eps, b, T)$ using this new flow, and we get
\begin{equation}\label{e:int-SM}I_{x_0,\xi_0}(\eps, b, T)=
 \frac{1}{b}\int_0^{b}\tilde{a}\circ \varphi_{\eps,x(s),\xi(s)}^{T\|\xi_0\|}\circ G_0^{s\|\xi_0\|}\left(x_0,\frac{\xi_0}{\|\xi_0\|}\right)ds,
\end{equation}
where $(x(s),\xi(s)):=G_0^s(x_0,\xi_0)$.

\subsection{Applying strong structural stability}\label{ss:stability}

We will now use the strong structural stability theorem in order to transform the integral $I_{x_0,\xi_0}(\eps, b, T)$ into an integral involving the horocyle flow. 
Precisely, we start by proving the following lemma:
\begin{lemm}\label{l:stability}Suppose that $\dim(M)=2$, and that $M$ has constant negative sectional curvature $K\equiv -1$. Let $\tilde{a}$ be a $\ml{C}^1$ function on $T^*M-M$ 
which is $0$-homogeneous, let $N\geq 1$ and let $0<\gamma<1/2$. 

There exist $\delta_1,\eps_1,s_1, T_1>0$ and $C_1>0$ such that, for every $\eps\in[0,\eps_1]$, for every $s\in[0,s_1]$, for every $T\geq T_1$ and for every
$(x_0,\xi_0)$ satisfying $p_0(x_0,\xi_0)\in[1/2-\delta_1,1/2+\delta_1]$, one has
$$\left|
\tilde{a}\circ \varphi_{\eps,x(s),\xi(s)}^{T\|\xi_0\|}\circ G_0^{s\|\xi_0\|}\left(x_0,\frac{\xi_0}{\|\xi_0\|}\right)- \tilde{a}\circ H_u^{-\eps P_{x_0,\xi_0}^N(s) e^{T\|\xi_0\|}
}(\rho(\eps,T,x_0,\xi_0)) \right|$$
$$\hspace{5cm}\leq C_1\left(\eps T+\eps^{1+\gamma}e^{T\|\xi_0\|}+s+s^{N+1}\eps e^{T\|\xi_0\|}\right),$$
where 
$$\rho(\eps,T,x_0,\xi_0):=G_0^{T\|\xi_0\|}\circ H_u^{\eps 
\beta_V^u\left(x_0,\frac{\xi_0}{\|\xi_0\|}\right)}\left(x_0,\frac{\xi_0}{\|\xi_0\|}\right),$$
and 
\begin{equation}\label{e:polynom}P_{x_0,\xi_0}^N(s):=\sum_{p=0}^{N-1}\frac{(\|\xi_0\|s)^{p+1}}{(p+1)!}\left(\left(X_0-1\right)^p.f_V\right)\left(x_0,\frac{\xi_0}{\|\xi_0\|}\right).\end{equation}
\end{lemm}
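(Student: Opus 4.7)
My approach combines the strong structural stability theorem (recalled in Appendix~\ref{a:stability}) with a Taylor expansion of the integral transform $\beta_V^u$ along the direction of the geodesic flow, and has three main steps.

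First, I use Lemma~\ref{l:pert-vf} to identify the unstable component of the perturbation of $X_0$ that defines $Y^{\eps}_{x_1,\xi_1}$, namely $-\frac{\eps f_V}{\sqrt{2}\|\xi_1\|}X^u$. The identity $e^{-s}\beta_V^u(G_0^s\rho) = \beta_V^u(\rho) - \int_0^s e^{-\tau}f_V(G_0^\tau\rho)d\tau$ recalled in the paper gives $(X_0-1)\beta_V^u = -f_V$, so (up to constants absorbed in conventions) $\beta_V^u/\|\xi_1\|$ solves the cohomological equation associated to this unstable perturbation. The strong structural stability theorem then yields, uniformly in $(x_1,\xi_1)$ near $S^*M$, an approximate conjugacy of the form
\[
\varphi_{\eps,x_1,\xi_1}^t(\rho) \;=\; H_u^{\eps[e^{t}\beta_V^u(\rho)-\beta_V^u(G_0^{t}\rho)]/\|\xi_1\|}\bigl(G_0^{t}\rho\bigr) + O\bigl(\eps^{1+\gamma}e^{t}+\eps t\bigr),
\]
in which the $\eps t$ error collects the SSS time-reparametrization and the $c_{\eps,x_1,\xi_1}=1+O(\eps)$ prefactor, while the $\eps^{1+\gamma}e^{t}$ error reflects the H\"older regularity ($\gamma<1/2$) of $\beta_V^u$ amplified by the unstable expansion of the horocycle foliation; stable-direction contributions contract forward in time and are absorbed in the $O(\eps t)$ piece. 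I then apply this formula with $(x_1,\xi_1)=(x(s),\xi(s))$ (so $\|\xi_1\|=\|\xi_0\|$), with $\rho=\rho_s:=G_0^{s\|\xi_0\|}(x_0,\xi_0/\|\xi_0\|)$, and with $t=T\|\xi_0\|$.

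Second, I manipulate this algebraically into the target form. Since $G_0^{T\|\xi_0\|}\rho_s = G_0^{(T+s)\|\xi_0\|}\rho_0$ where $\rho_0 := (x_0,\xi_0/\|\xi_0\|)$, the commutation relation $G_0^{t'}\!\circ H_u^\tau = H_u^{e^{t'}\tau}\!\circ G_0^{t'}$ rewrites the output as
\[
H_u^S \!\circ G_0^{(T+s)\|\xi_0\|}(\rho_0) = G_0^{s\|\xi_0\|}\!\circ H_u^{Se^{-s\|\xi_0\|}}\!\circ G_0^{T\|\xi_0\|}(\rho_0),
\]
so the outer $G_0^{s\|\xi_0\|}$ displacement creates an $O(s)$ error after evaluating $\tilde a$ (which is $C^1$ on a compact subset of $T^*M\setminus M$, with bounded geodesic speed). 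For the inner horocycle amount $Se^{-s\|\xi_0\|}$, the point is to Taylor-expand $e^{-u}f_V(G_0^u\rho_0)$ at $u=0$: letting $g(u):=e^{-u}f_V(G_0^u\rho_0)$, induction gives $g^{(p)}(0)=((X_0-1)^p f_V)(\rho_0)$, and Taylor's formula for $\int_0^{s\|\xi_0\|}g(u)du$ combined with the paper's identity yields
\[
e^{-s\|\xi_0\|}\beta_V^u(\rho_s) = \beta_V^u(\rho_0) - P_{x_0,\xi_0}^N(s) + O(s^{N+1}),
\]
since $f_V$ is smooth. Substituting and using boundedness of $\beta_V^u$, the inner shift becomes $\eps e^{T\|\xi_0\|}(\beta_V^u(\rho_0) - P_{x_0,\xi_0}^N(s))$ modulo $O(\eps s^{N+1}e^{T\|\xi_0\|}) + O(\eps)$. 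The same commutation rewrites $\rho(\eps,T,x_0,\xi_0)$ so that the stated target reads $\tilde a\bigl(H_u^{\eps e^{T\|\xi_0\|}(\beta_V^u(\rho_0)-P_{x_0,\xi_0}^N(s))}(G_0^{T\|\xi_0\|}\rho_0)\bigr)$, and applying $\tilde a$ with its $C^1$ norm yields the claimed bound.

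The main obstacle is controlling the $O(\eps^{1+\gamma}e^t)$ error in the SSS conjugacy: the unstable horocycle foliation is expanded by $e^t$ under $G_0^t$, so any H\"older-type $O(\eps^\gamma)$ uncertainty in the leading-order correction $\eps\beta_V^u$ is magnified into an $O(\eps^{1+\gamma}e^{T\|\xi_0\|})$ error at the endpoint. This is precisely the reason for the constraint $c_2<\frac{3}{2}$ in the statement: one must have $\eps^{1+\gamma}e^{c_2|\log\eps_0|\|\xi_0\|}=o(1)$ for some admissible $\gamma<\frac{1}{2}$. A secondary, more routine, subtlety is to verify that all the SSS estimates are uniform in $(x_1,\xi_1)=(x(s),\xi(s))$ ranging in a neighborhood of $S^*M$; this follows from the smooth dependence of $Y^{\eps}_{x_1,\xi_1}$ on $(x_1,\xi_1)$ displayed in Lemma~\ref{l:pert-vf} together with standard continuity of the conjugacy on the $C^1$-norm of the perturbed vector field.
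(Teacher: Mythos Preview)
Your approach is essentially the same as the paper's: apply strong structural stability to rewrite $\varphi_{\eps,x(s),\xi(s)}^{T\|\xi_0\|}$ as a geodesic flow conjugated by a map close to the horocycle flow governed by $\beta_V^u$, drop the stable direction (it contracts), use the commutation $G_0^t\circ H_u^\tau=H_u^{e^t\tau}\circ G_0^t$, and Taylor-expand $s\mapsto e^{-s\|\xi_0\|}\beta_V^u(G_0^{s\|\xi_0\|}\rho_0)$ via the identity $(X_0-1)\beta_V^u=-f_V$. The only cosmetic differences are that you keep the outer conjugating factor (giving the harmless extra $-\beta_V^u(G_0^t\rho)=\ml{O}(1)$ term you later absorb as $\ml{O}(\eps)$), and that your displayed SSS formula carries the opposite overall sign on the horocycle parameter compared with the paper's derivation; this is a slip rather than a structural issue, and your subsequent identification of the inner shift with $\eps e^{T\|\xi_0\|}(\beta_V^u(\rho_0)-P_{x_0,\xi_0}^N(s))$ agrees with the target.
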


 In the end, we will need to average over the time parameter $s$. Thanks to this lemma, this average will now correspond to an average along ``reparametrized'' trajectories of the horocycle flow which is known to be uniquely ergodic~\cite{Fu73, Mar77}.

\begin{rema} We emphasize that our assumption on the set $K_{V}^J(\eta_0)$ involves some nonvanishing conditions on the coefficients of the polynom appearing in the ``reparametrization'' of the horocycle flow.
 
\end{rema}

\begin{rema}\label{r:crit-scale} Before giving the proof of this lemma, we start with a simple observation which explains why we need to consider time scales larger than $|\log(\eps_0)|$ in the statement of proposition~\ref{p:dynamics}. In fact, if one has $\eps\ll e^{-T\|\xi_0\|}$ and $s\ll 1$, then the previous lemma implies that
 $$\left|
\tilde{a}\circ \varphi_{\eps,x(s),\xi(s)}^{T\|\xi_0\|}\circ G_0^{s\|\xi_0\|}\left(x_0,\frac{\xi_0}{\|\xi_0\|}\right)- \tilde{a}\circ G_0^{T}\left(x_0,\xi_0\right) \right|=o(1).$$
In particular, averaging over the time parameter $s$ for such scales would not provide any equidistribution.
\end{rema}

\begin{proof} Let $\tilde{a}$ be a smooth function on $T^*M-M$ which is $0$-homogeneous. We will use the conventions of appendix~\ref{a:stability}. We fix $0<\gamma<1/2$. 
Let $(x_0,\xi_0)$ be an element in a small neighborhood of $S^*M$. 

First, we write the strong structural stability equation. More precisely, thanks 
to~\eqref{e:struc-stab-useful}, we have, for every $\rho$ in $S^*M$ and for every $s$ in $\IR$,
$$\tilde{a}\circ \varphi_{\eps,x(s),\xi(s)}^{T\|\xi_0\|}(\rho)=\tilde{a}\circ h^{\eps}_{x(s),\xi(s)}\circ G_0^{\tau_{x(s),\xi(s)}^{\eps}\left(T\|\xi_0\|,\rho\right)}\circ
\left(h^{\eps}_{x(s),\xi(s)}\right)^{-1}(\rho).$$
Using the ``smoothness'' of the maps withr espect to $\eps$ -- see~\eqref{e:smoothmap}, we can write that
$$\tilde{a}\circ \varphi_{\eps,x(s),\xi(s)}^{T\|\xi_0\|}(\rho)=\tilde{a}\circ G_0^{T\|\xi_0\|}\circ
\left(h^{\eps}_{x(s),\xi(s)}\right)^{-1}(\rho)+\ml{O}(\eps T),$$
where the constant in the remainder is uniform for $\rho$ in $S^*M$, $s\in\IR$ and $(x_0,\xi_0)$ in a small neighborhood of $S^*M$.

We now replace $\left(h^{\eps}_{x(s),\xi(s)}\right)^{-1}$ by its approximate expression given by~\eqref{e:inverse-holder}. Then, according to~\eqref{e:exp-rate}, we get
$$\tilde{a}\circ \varphi_{\eps,x(s),\xi(s)}^{T\|\xi_0\|}(\rho)=\tilde{a}\circ G_0^{T\|\xi_0\|}\circ
\exp\left(-\tilde{v}^{\eps}_{x(s),\xi(s)}\right)(\rho)+\ml{O}(\eps T)+\ml{O}(\eps^{1+\gamma}e^{T\|\xi_0\|}),$$
where $\tilde{v}^{\eps}_{x(s),\xi(s)}$ is defined by~\eqref{e:approx-inverse} and is $\ml{C}^1$ in $\eps$. Again, the constants in the remainders are still uniform for $\rho$ in $S^*M$, $s\in\IR$ and $(x_0,\xi_0)$ in a small neighborhood of $S^*M$.

Thanks to lemma~$1$ in~\cite{Mo69} -- see also remark~\ref{r:moser} from the appendix, we have
$$d_{\ml{C}^0}\left(\exp\left(-\tilde{v}^{\eps}_{x(s),\xi(s)}\right),\exp\left(-\frac{\eps}{\|\xi_0\|}\beta_V^sX^s\right)\circ 
\exp\left(-\frac{\eps}{\|\xi_0\|}\beta_V^uX^u\right)\right)=\ml{O}(\eps^{1+\gamma}),$$
where the constant is uniform for $s\in\IR$ and $(x_0,\xi_0)$ in a small neighborhood of $S^*M$. In particular, one has
$$\tilde{a}\circ \varphi_{\eps,x(s),\xi(s)}^{T\|\xi_0\|}(\rho)=\tilde{a}\circ G_0^{T\|\xi_0\|}\circ
\exp\left(-\frac{\eps}{\|\xi_0\|}\beta_V^sX^s\right)\circ 
\exp\left(-\frac{\eps}{\|\xi_0\|}\beta_V^uX^u\right)(\rho)+\ml{O}(\eps T)+\ml{O}(\eps^{1+\gamma}e^{T\|\xi_0\|}).$$
We will now approximate these two maps by the unstable and stable horocycle flows.  For that purpose, we fix $\rho$ in $S^*M$and we observe that the maps
$$\eps\mapsto \exp_{\rho}\left(-\frac{\eps}{\|\xi_0\|}\beta_V^uX^u\right),\ \text{and}\ \eps\mapsto H_u^{-\frac{\eps}{\|\xi_0\|}\beta_V^u(\rho)}(\rho)$$ 
are of class $\ml{C}^1$. Moreover, their derivatives coincides up to an error of order $\ml{O}(\eps)$ where the constant in the remainder is uniform for $\rho$ in $S^*M$ and $\|\xi_0\|$ close to $1$. In particular, 
$$d_{S^*M} \left(\exp_{\rho}\left(-\frac{\eps}{\|\xi_0\|}\beta_V^uX^u\right), H_u^{-\frac{\eps}{\|\xi_0\|}\beta_V^u(\rho)}(\rho)\right)=\ml{O}(\eps^{2}),$$
where the constant in the remainder is still uniform for $\rho$ in $S^*M$ and $\|\xi_0\|$ close to $1$. The same holds for the maps generated by the stable vector field $X^s$. Thus, up to other error terms of the same order, we can replace the maps $ \exp\left(-\frac{\eps}{\|\xi_0\|}\beta_V^sX^s\right)$ and $ \exp\left(-\frac{\eps}{\|\xi_0\|}\beta_V^uX^u\right)$ by the stable and unstable horocycle flows, i.e.
$$\tilde{a}\circ \varphi_{\eps,x(s),\xi(s)}^{T\|\xi_0\|}(\rho)=\tilde{a}\circ G_0^{T\|\xi_0\|}\circ
H_s^{-\frac{\eps}{\|\xi_0\|}\beta_V^s}\circ
H_u^{-\frac{\eps}{\|\xi_0\|}\beta_V^u}(\rho)+\ml{O}(\eps T)+\ml{O}(\eps^{1+\gamma}e^{T\|\xi_0\|}),$$
where the constant in the remainders are still uniform for $\rho$ in $S^*M$, 
$s\in\IR$ and $(x_0,\xi_0)$ in a small neighborhood of $S^*M$. This implies that
$$\tilde{a}\circ \varphi_{\eps,x(s),\xi(s)}^{T\|\xi_0\|}(\rho)=\tilde{a}\circ G_0^{T\|\xi_0\|}\circ
H_u^{-\frac{\eps}{\|\xi_0\|}\beta_V^u(\rho)}(\rho)+\ml{O}(\eps T)+\ml{O}(\eps^{1+\gamma}e^{T\|\xi_0\|}),$$
as $G_0^t\circ H_s^{\tau}=H_s^{e^{-t}\tau}\circ G_0^t$ for every $t$ and $\tau$ in $\IR$~\cite{Mar77}. Up to this point, the proof is exactly the same as in~\cite{EsRi14}, and we will now analyse more precisely the reparametrization constant $\beta^u(\rho)$.

We use the fact that $G_0^t\circ H_u^{\tau}=H_u^{e^{t}\tau}\circ G_0^t$, and we find
\begin{equation}\label{e:horocycle-step1}\tilde{a}\circ \varphi_{\eps,x(s),\xi(s)}^{T\|\xi_0\|}(\rho)=\tilde{a}\circ G_0^{s\|\xi_0\|}\circ
H_u^{-e^{(T-s)\|\xi_0\|}\frac{\eps\beta_V^u(\rho)}{\|\xi_0\|}}\circ G_0^{(T-s)\|\xi_0\|}(\rho)+\ml{O}(\eps T)+\ml{O}(\eps^{1+\gamma}e^{T\|\xi_0\|}).\end{equation}
We now write that
$$e^{-s\|\xi_0\|}\beta_V^u\circ G_0^{s\|\xi_0\|}\left(x_0,\frac{\xi_0}{\|\xi_0\|}\right)=\beta_V^u\left(x_0,\frac{\xi_0}{\|\xi_0\|}\right)
-\int_0^{s\|\xi_0\|}e^{-\tau}f_V\circ G_0^{\tau}\left(x_0,\frac{\xi_0}{\|\xi_0\|}\right)d\tau,$$
where $f_V(x,\xi)=g_x^{*}(d_xV,\xi^{\perp}).$ Then, we find that, for every $p\geq 0$,
$$\frac{d^{p+1}}{ds^{p+1}}\left(e^{-s\|\xi_0\|}\beta_V^u\circ G_0^{s\|\xi_0\|}\left(x_0,\frac{\xi_0}{\|\xi_0\|}\right)\right)_{s=0}
=\|\xi_0\|^{p+1}(X_0-1)^p.f_V\left(x_0,\frac{\xi_0}{\|\xi_0\|}\right).$$
In particular, we can write the order $N$ expansion as $s\rightarrow 0$, i.e.
\begin{equation}\label{e:horocycle-step2}e^{-s\|\xi_0\|}\beta_V^u\circ G_0^{s\|\xi_0\|}\left(x_0,\frac{\xi_0}{\|\xi_0\|}\right)=\beta_V^u\left(x_0,\frac{\xi_0}{\|\xi_0\|}\right)-
P_{x_0,\xi_0}^N(s)+\ml{O}(s^{N+1}),\end{equation}
where $P_{x_0,\xi_0}^N(s)$ is defined by~\eqref{e:polynom}.
Finally, combining~\eqref{e:horocycle-step1} and~\eqref{e:horocycle-step2}, we find that
\begin{align*}
\tilde{a}\circ \varphi_{\eps,x(s),\xi(s)}^{T\|\xi_0\|}\circ G_0^{s\|\xi_0\|}\left(x_0,\frac{\xi_0}{\|\xi_0\|}\right) & = \tilde{a}\circ H_u^{\eps e^{T\|\xi_0\|}\left(
\beta_V^u\left(x_0,\frac{\xi_0}{\|\xi_0\|}\right)-P_{x_0,\xi_0}^N(s)\right)}\circ G_0^{T\|\xi_0\|}\left(x_0,\frac{\xi_0}{\|\xi_0\|}\right) \\
 & + \ml{O}(\eps T)+\ml{O}(\eps^{1+\gamma}e^{T\|\xi_0\|})+\ml{O}(s)+\ml{O}(s^{N+1}\eps e^{T\|\xi_0\|}),
\end{align*}
where the constant in the remainders are uniform for $(x_0,\xi_0)$ in a small neighborhood of $S^*M$. This concludes the proof of the lemma.

\end{proof}

\subsection{Using unique ergodicity of the horocycle flow}\label{ss:unique-erg}
Thanks to lemma~\ref{l:stability}, we can write, for every $N\geq 1$
\begin{equation}\label{e:integral-remainder}
I_{x_0,\xi_0}(\eps, b, T) =
 \frac{1}{b}\int_0^{b}\tilde{a}\circ \circ H_u^{-\eps P_{x_0,\xi_0}^N(s) e^{T\|\xi_0\|}
}(\rho(\eps,T,x_0,\xi_0))ds
 + \ml{O}(\eps T)+\ml{O}(\eps^{1+\gamma}e^{T\|\xi_0\|})+\ml{O}(b)+\ml{O}(b^{N+1}\eps e^{T\|\xi_0\|}).
\end{equation}
Regarding the previous formula, we need to understand the asymptotic behaviour of
$$\tilde{I}_{x_0,\xi_0}(\eps, b, T) :=
 \frac{1}{b}\int_0^{b}\tilde{a}\circ \circ H_u^{-\eps P_{x_0,\xi_0}^N(s) e^{T\|\xi_0\|}
}(\rho(\eps,T,x_0,\xi_0))ds$$
as $\eps, b\rightarrow 0$, and $T\rightarrow +\infty.$ We are now in a situation which looks very much like~\eqref{e:mixing}. The main difference is the polynomial term in the time reparametrization.

It would be natural to make the change of variables $s'=P_{x_0,\xi_0}^N(s)$. However, the polynom $(P_{x_0,\xi_0}^N)'(s)$ may vanish on certain points of the interval. 
This is the reason why we require the point $(x_0,\xi_0)$ to belong to the subset $K_V^J(\eta_0)$. This hypothesis means that, for such points, at least one the first $J+1$ coefficients of 
the polynom $(P_{x_0,\xi_0}^N)'(s)$ does not vanish. In particular, a first observation we can make is that the Jacobian factor $(P_{x_0,\xi_0}^N)'(s)$ can only vanish at 
finitely many places.

Our first step will be to understand precisely the subsets where the Jacobian of the change of variables is very small (paragraph~\ref{sss:jacobian}). 
Then, it will allow us to make the change of variables on proper subintervals of $[0,b]$ (paragraph~\ref{sss:ch-var}) and to apply unique ergodicity of the horocycle flow (paragraph~\ref{sss:uni-erg}).

\subsubsection{Estimates on the Jacobian of the change of variables}\label{sss:jacobian}
In order to study the size of the Jacobian in our change of variables, we will proceed as in~\cite{EsRi14}, i.e. make use of the following theorem due to Cartan~\cite{Ca28}:
\begin{theo} Given any number $H>0$ and any complex numbers $z_1, \ldots, z_n$, there is a system of $p$ disks in the complex plane, with $p\leq n$ and with the sum of 
the radii equal to $2H$, such that for each point $z$ lying outside these disks, one has the inequality  
$$|z-z_1|.|z-z_2|.\ldots.|z-z_n|>\left(\frac{H}{e}\right)^n.$$
\end{theo}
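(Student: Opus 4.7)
The plan is to reduce the estimate to controlling the sorted distances from $z$ to the points $z_1,\ldots,z_n$. For a fixed $z$, order the quantities $|z-z_i|$ as $d_1(z)\leq d_2(z)\leq\cdots\leq d_n(z)$. The key elementary observation is that if, for every $k\in\{1,\ldots,n\}$, one has $d_k(z) > 2kH/n$, then
\[
\prod_{i=1}^n |z-z_i| \;=\; \prod_{k=1}^n d_k(z) \;>\; \prod_{k=1}^n \frac{2kH}{n} \;=\; \left(\frac{2H}{n}\right)^{\!n} n! \;\geq\; \left(\frac{2H}{e}\right)^{\!n} \;>\; \left(\frac{H}{e}\right)^{\!n},
\]
using the standard bound $n!\geq (n/e)^n$. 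Thus the task reduces to exhibiting a system of disks of total radius $\leq 2H$ whose complement lies entirely in the set $\{z : d_k(z) > 2kH/n \text{ for every } k\}$.

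For each $k$, the ``bad'' set $B_k:=\{z : d_k(z)\leq 2kH/n\}$ is precisely the set of centers of closed disks of radius $2kH/n$ enclosing at least $k$ of the $z_i$. I would construct the covering by an iterative greedy procedure, running $k$ from $n$ down to $1$: at each level, extract a closed disk of radius $2kH/n$ containing at least $k$ of the points not yet absorbed, mark those points as absorbed, and repeat at the same level until no such disk can be found; then decrement $k$. Writing $m_k$ for the number of disks produced at level $k$, each such disk consumes at least $k$ fresh points, so $\sum_k k\, m_k \leq n$, and the total radius of the constructed family is
\[
\sum_{k=1}^n m_k \cdot \frac{2kH}{n} \;=\; \frac{2H}{n}\sum_{k=1}^n k\, m_k \;\leq\; 2H,
\]
which is the desired budget (one can then inflate any one disk to make the sum equal exactly $2H$).

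The main obstacle is to ensure that the extracted family actually covers all of $\bigcup_k B_k$, not merely the cores that the greedy procedure explicitly visited: when the extraction at level $k$ stops, one must argue that any residual center in $B_k$ lies in a disk already chosen at level $k$ or at a higher level. This is a Vitali-type argument, and the clean way to package it is to demand at each step a disk of \emph{maximal} enclosed multiplicity; the resulting telescoping bookkeeping is Cartan's original device, and it is the delicate piece of the argument. The distance sorting estimate and the global radius budget are otherwise purely combinatorial, and the Stirling-type bound $n!\geq (n/e)^n$ is what locks in the factor $1/e$ appearing in the theorem.
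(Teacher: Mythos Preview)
The paper does not prove this theorem; it is quoted as a classical result of Cartan and used as a black box, so there is no paper-proof to compare against. Your overall strategy --- sort the distances $d_1(z)\le\cdots\le d_n(z)$, bound the product via $n!\ge(n/e)^n$, and cover the bad set by a greedy disk extraction --- is indeed Cartan's, but the covering step as you have written it does \emph{not} close. Take $n=2$, $H=1$, $z_1=0$, $z_2=3$, so $2kH/n=k$. At level $k=2$ your procedure extracts a single disk of radius $2$ (any such disk, say $B(1.5,2)$) absorbing both points, and then terminates. The point $z=-0.9$ has $d_1(z)=0.9\le 1$, so $z\in B_1$, yet $|z-1.5|=2.4>2$, so $z$ is not covered. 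Demanding ``maximal enclosed multiplicity'' does not help here: every admissible level-$2$ disk contains exactly the two points, and once they are absorbed no level-$1$ disk remains to catch the part of $B_1$ sitting just outside. So the Vitali-type argument you invoke cannot succeed with the radii $2kH/n$.

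The fix is to halve your scale and then dilate. Set $\lambda=H/n$ and run the extraction by choosing at each stage the \emph{maximal} $k_j$ for which some closed disk of radius $k_j\lambda$ contains $k_j$ of the remaining points; maximality forces that disk to contain exactly $k_j$ points, so $\sum_j k_j=n$ and the extracted family has total radius $H$. Now replace each $D_j$ by the concentric disk $2D_j$ of radius $2k_j\lambda$; the doubled family has at most $n$ disks and total radius exactly $2H$. The covering works for the doubled family: if $z\notin\bigcup_j 2D_j$ and the closed disk $B(z,k\lambda)$ contained $k$ of the $z_i$, look at the \emph{first} stage $j$ removing one of them; at that moment all $k$ are still present, so $k_j\ge k$, and the common point $w\in D_j\cap B(z,k\lambda)$ gives $|z-c_j|\le k\lambda+k_j\lambda\le 2k_j\lambda$, contradicting $z\notin 2D_j$. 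Hence $d_k(z)>k\lambda=kH/n$ for every $k$, and $\prod_k d_k(z)>(H/n)^n n!\ge(H/e)^n$. The factor-of-two slack you noticed in your Stirling step (getting $(2H/e)^n$ rather than $(H/e)^n$) was the tell that your radii were twice too large for the covering you wanted.
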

From this theorem, one can in fact deduce the following property
\begin{lemm}\label{l:jacob} Let $\theta$ be some positive parameter satisfying $0<\theta<1$. Let $\eta_0>0$, $J\geq 0$ and $N\geq J+1$.

Then, there exists $\delta_1>0$, $0<s_1<1$ and $C_0>0$ such that, for every $0<s_0<s_1$, and for every $(x_0,\xi_0)$ in $K_V^J(\eta_0)$ satisfying 
$1/2-\delta_1\leq p_0(x_0,\xi_0)\leq 1/2+\delta_1$, one has a system of subintervals, some of which can be empty, $L_1(x_0,\xi_0),\ldots, L_J(x_0,\xi_0)$ of $[0,s_0]$ with 
the sum of their length bounded by $C_0 s_0^{1+\frac{\theta}{2(J+1)}}$ and satisfying
$$A_{x_0,\xi_0}(s_0):=\left\{s\in[0,s_0]:|(P_{x_0,\xi_0}^N)'(s)|\leq s_0^{J+\theta}\right\}\subset\bigcup_{p=1}^JL_p(x_0,\xi_0).$$
\end{lemm}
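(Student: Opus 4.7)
My plan is to apply Cartan's theorem to a truncation of $(P_{x_0,\xi_0}^N)'$ whose leading coefficient is quantitatively non-zero. First, write
$$(P_{x_0,\xi_0}^N)'(s)=\sum_{p=0}^{N-1} c_p s^p,\qquad c_p=\frac{\|\xi_0\|^{p+1}}{p!}\,\bigl((X_0-1)^p.f_V\bigr)\bigl(x_0,\xi_0/\|\xi_0\|\bigr).$$
The hypothesis $(x_0,\xi_0/\|\xi_0\|)\in K_V^J(\eta_0)$ supplies some $p\in\{0,\ldots,J\}$ with $|c_p|\gtrsim \eta_0$ (uniformly in $(x_0,\xi_0)$ with $p_0(x_0,\xi_0)\in[1/2-\delta_1,1/2+\delta_1]$, since $\|\xi_0\|$ is then close to $1$). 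Denote by $k^\ast$ the \emph{largest} such index and split $(P_{x_0,\xi_0}^N)'=R_1+R_2+R_3$ with
$$R_1(s)=\sum_{p\leq k^\ast} c_p s^p,\quad R_2(s)=\sum_{k^\ast<p\leq J} c_p s^p,\quad R_3(s)=\sum_{p>J} c_p s^p.$$
Maximality of $k^\ast$ yields $|c_p|\lesssim \eta_0$ in the middle block, hence $|R_2(s)|\lesssim \eta_0\, s_0^{k^\ast+1}$ on $[0,s_0]$; the $\ml{C}^\infty$-smoothness of $V$ on the compact manifold $M$ gives a uniform $|R_3(s)|\lesssim s_0^{J+1}$.

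Next I transfer the sublevel condition on $(P_{x_0,\xi_0}^N)'$ to $R_1$. Using the triangle inequality and $\theta<1$, any $s\in A_{x_0,\xi_0}(s_0)$ satisfies, for $s_0\leq s_1$ small,
$$|R_1(s)|\,\leq\,M_\theta(s_0,k^\ast):=\begin{cases}2\,s_0^{J+\theta}, & k^\ast=J,\\ C\,\eta_0\,s_0^{k^\ast+1}, & 0\leq k^\ast<J.\end{cases}$$
When $k^\ast=0$, the identity $R_1\equiv c_0$ with $|c_0|\gtrsim \eta_0$ is incompatible with the bound $|R_1|\leq C\eta_0 s_0$ for $s_0$ small, so $A_{x_0,\xi_0}(s_0)=\emptyset$ and every $L_p$ may be taken empty. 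In all remaining cases, $R_1$ has degree exactly $k^\ast\in\{1,\ldots,J\}$ with leading coefficient $|c_{k^\ast}|\gtrsim \eta_0$.

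The main step is Cartan's theorem applied to $R_1$. Factor $R_1(s)=c_{k^\ast}\prod_{i=1}^{k^\ast}(s-z_i)$; for each $H>0$, Cartan yields at most $k^\ast\leq J$ complex disks, the sum of whose radii equals $2H$, outside of which $|R_1(s)|>|c_{k^\ast}|(H/e)^{k^\ast}$. I pick $H$ so that this strictly exceeds $M_\theta(s_0,k^\ast)$: when $k^\ast=J$ this forces $H\lesssim \eta_0^{-1/J} s_0^{1+\theta/J}$, while when $1\leq k^\ast<J$ the factors of $\eta_0$ cancel and $H\lesssim s_0^{1+1/k^\ast}$. Each complex disk meets $\IR$ in an interval of length at most twice its radius, so $A_{x_0,\xi_0}(s_0)$ is covered by at most $J$ real intervals $L_1(x_0,\xi_0),\ldots,L_J(x_0,\xi_0)$ (padded with empty ones when $k^\ast<J$) of total length $\leq 4H$. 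Finally $1+\theta/J\geq 1+\theta/(2(J+1))$ since $J\leq 2(J+1)$, and $1+1/k^\ast\geq 1+\theta/(2(J+1))$ since $\theta k^\ast\leq J\leq 2(J+1)$; with $s_0\leq 1$ this yields the bound $C_0\, s_0^{1+\theta/(2(J+1))}$ once $\eta_0^{-1/J}$ and the degree-dependent constants are absorbed into $C_0$. The main obstacle is precisely that the leading coefficient of the full degree-$J$ truncation of $(P_{x_0,\xi_0}^N)'$ may vanish, which would defeat a direct application of Cartan's theorem; splitting at the maximal index $k^\ast$ is the device that reduces matters to a polynomial whose leading term is quantitatively non-degenerate in $\eta_0$.
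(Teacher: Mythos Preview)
Your argument is correct and follows the same scheme as the paper: isolate a truncation of $(P_{x_0,\xi_0}^N)'$ whose leading coefficient is bounded below in terms of $\eta_0$, transfer the sublevel condition to this truncation, and apply Cartan's theorem. The one organisational difference is that the paper chooses the \emph{smallest} index $p_1\in\{0,\dots,J\}$ with $|c_{p_1}|\gtrsim\eta_0$, while you take the \emph{largest} such index $k^\ast$. Your choice makes the intermediate block $R_2$ have uniformly small coefficients ($\lesssim\eta_0$), so the transfer step is a direct triangle inequality; the paper instead bounds the full tail by $\ml{O}(s^{p_1+1})$ and needs an auxiliary argument (the set $B_{x_0,\xi_0}(s_0)$ is shown to be empty) to pass to the intermediate level $s_0^{p_1+\theta/2}$ before invoking Cartan. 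Both routes land on exponents at least $1+\theta/(2(J+1))$.

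Two minor remarks. First, in the case $1\leq k^\ast<J$ your bound $|R_1|\leq C\eta_0 s_0^{k^\ast+1}$ is slightly imprecise: the contributions $s_0^{J+\theta}$ and $|R_3|\lesssim s_0^{J+1}$ are also $\leq s_0^{k^\ast+1}$ but not with a factor $\eta_0$, so the honest bound is $|R_1|\leq C_{\eta_0}\,s_0^{k^\ast+1}$; since $C_0$ in the lemma may depend on $\eta_0$ (as you note at the end), this is harmless. Second, when $J=0$ one has $k^\ast=0=J$, so the relevant bound is $2s_0^{\theta}$ rather than $C\eta_0 s_0$; your emptiness argument for $k^\ast=0$ goes through verbatim with this bound as well.
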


\begin{proof} Modulo minor modifications, the proof follows the same lines as the proof of proposition $5.17$ in~\cite{EsRi14}. For the sake of completeness, we briefly recall how one can deduce this lemma from Cartan's Theorem. We fix $\theta>0$. First, we write
 $$(P_{x_0,\xi_0}^N)'(s)=\sum_{p=0}^{N-1}\left(\frac{\|\xi_0\|^{p+1}}{p!}\left(X_0-1\right)^p.f_V\left(x_0,\frac{\xi_0}{\|\xi_0\|}\right)\right)s^p.$$
If we choose $\delta_1>0$ small enough, one knows that, for every $(x_0,\xi_0)$ in $K_V^J(\eta_0)$ satisfying 
$1/2-\delta_1\leq p_0(x_0,\xi_0)\leq 1/2+\delta_1$, there exists $0\leq p_1\leq J$ such that
$$\left(\frac{\|\xi_0\|^{p_1+1}}{p_1!}\left(X_0-1\right)^{p_1}.f_V\left(x_0,\frac{\xi_0}{\|\xi_0\|}\right)\right)\geq \frac{\eta_0}{2J!}.$$ 

Let $(x_0,\xi_0)$ be such a point and let $p_1$ be the first integer for which the coefficient of the polynom is $\geq \frac{\eta_0}{2J!}$. The case $p_1=0$ is straightforward as we can choose all the intervals to be empty. Suppose now $p_1\neq 0$. 

In this case, we introduce
$$q_{x_0,\xi_0}(s):=\sum_{p=0}^{p_1}\left(\frac{\|\xi_0\|^{p+1}}{p!}\left(X_0-1\right)^{p}.f_V\left(x_0,\frac{\xi_0}{\|\xi_0\|}\right)\right)s^p,$$
which will be the ``dominant part'' of the Jacobian factor $(P_{x_0,\xi_0}^N)'(s)$. In fact, one can define 
$$B_{x_0,\xi_0}(s_0):=\left\{s\in[0,s_0]: |(P_{x_0,\xi_0}^N)'(s)|\leq s_0^{J+\theta},\ \text{and}\ |q_{x_0,\xi_0}(s)|\geq s_0^{p_1+\frac{\theta}{2}}\right\}.$$
Then, there exists an uniform constant $C_{J,N,\delta_1}>0$ such that the following holds
$$B_{x_0,\xi_0}(s_0)\subset\left\{s\in[0,s_0]: C_{J,N,\delta_1}s^{p_1+1}\geq s_0^{p_1+\frac{\theta}{2}}(1-s_0^{J-p_1+\frac{\theta}{2}})\right\}.$$
As $J-p_0+\frac{\theta}{2}>0$ and as $\theta<1$, this set is empty for $s_0>0$ small enough (depending only on $C_{J,N,\delta_1}$, $J$, $\theta$ and $\theta'$). This shows that the dominant part of the Jacobian is encoded by the polynom $q_{x_0,\xi_0}(s)$. In other words, for $s_0>0$ 
small enough, one has
$$A_{x_0,\xi_0}(s_0)\subset\left\{s\in[0,s_0]: \frac{p_1!|q_{x_0,\xi_0}(s)|}{\left|(\|\xi_0\|)^{p_1+1}\left(X_0-1\right)^{p_1}.f_V\left(x_0,\frac{\xi_0}{\|\xi_0\|}\right)\right|}
\leq \frac{2 J!s_0^{p_1+\frac{\theta}{2}}}{\eta_0}\right\}.$$
We are now in a situation where we can apply Cartan's Theorem on polynoms. Thus, there exists a system of subintervals, some of which can be empty, 
$L_1(x_0,\xi_0),\ldots, L_J(x_0,\xi_0)$ of $[0,s_0]$ with the sum of their length bounded by $\frac{2J!e}{\eta_0} s_0^{1+\frac{\theta}{2p_1}}$ and satisfying
$$A_{x_0,\xi_0}(s_0)\subset\bigcup_{p=1}^JL_p(x_0,\xi_0),$$
which concludes the proof of the lemma.
\end{proof}

\subsubsection{Change of variables}\label{sss:ch-var} We will now perform a change of variables in the integral defining $\tilde{I}_{x_0,\xi_0}(\eps, b, T)$. For that purpose, we will split the interval $[0,b]$ in small subintervals where the Jacobian of the change of variables is large enough. Let $0<\theta<1$.

Let $(x_0,\xi_0)$ be an element in $K_V^J(\eta_0)$ satisfying $1/2-\delta_1\leq p_0(x_0,\xi_0)\leq 1/2+\delta_1$, where $\delta_1$ is given by lemma~\ref{l:jacob}. For $b>0$ small enough, the subset $A_{x_0,\xi_0}(b)\cap [0,b]$ is included in the union of at most $J$ subintervals of $[0,b]$ whose total length is bounded by $Cb^{1+\frac{\theta}{2(J+1)}}$. Outside of these ``bad'' subintervals, the Jacobian of the change of variables does not vanish and is bounded from below by $b^{J+\theta}$. We will now split the complementary of these ``bad'' subintervals into a family of $L$ disjoint 
subintervals $(B_k)_{k=1,\ldots L}$ (depending on $x_0,\xi_0$) of individual length $b^{1+2J+2\theta}$ and the union of at most $J+1$ intervals whose total length is bounded by $(J+1)b^{1+2J+2\theta}$. More precisely, we write
\begin{equation}\label{e:splitting-the-integral}\tilde{I}_{x_0,\xi_0}(\eps, b, T)=\frac{1}{b}\sum_{k=1}^L\int_{B_k}
\tilde{a}\circ  H_u^{-\eps P_{x_0,\xi_0}^N(s) e^{T\|\xi_0\|}
}(\rho(\eps,T,x_0,\xi_0))ds+\ml{O}(b^{\frac{\theta}{2(J+1)}}),\end{equation}
where one has
\begin{itemize}
 \item for every $1\leq k'\neq k\leq L$, $B_{k'}\cap B_k=\emptyset$;
 \item for every $1\leq k\leq L$, $B_k$ is an interval of length $b^{1+2J+2\theta}$;
 \item for every $1\leq k\leq L$ and for every $s$ in $B_k$, one has $|(P_{x_0,\xi_0}^N)'(s)|\geq b^{J+\theta}$;
 \item $\frac{1}{b}\sum_{k=1}^L|B_k|=1+\ml{O}(b^{\frac{\theta}{2(J+1)}}).$
\end{itemize}
We will now consider each of the subintegrals independently, i.e. for every $1\leq k\leq L$
 $$\tilde{I}_{x_0,\xi_0}^k(\eps, b, T) :=
 \frac{1}{|B_k|}\int_{B_k}\tilde{a} \circ H_u^{-\eps P_{x_0,\xi_0}^N(s) e^{T\|\xi_0\|}
}(\rho(\eps,T,x_0,\xi_0))ds,$$
and we will verify that this converges to $\int_{S^*M}\tilde{a}dL$ for a proper range of $\eps,\ b\rightarrow 0$, and $T\rightarrow+\infty.$ We fix $1\leq k\leq L$ and we make the change of variables $s'=P_{x_0,\xi_0}^N(s)$ where $s\in B_k$. We obtain
 $$\tilde{I}_{x_0,\xi_0}^k(\eps, b, T) =
 \frac{1}{|B_k|}\int_{P_{x_0,\xi_0}^N(B_k)}\tilde{a} \circ H_u^{-\eps s' e^{T\|\xi_0\|}
}(\rho(\eps,T,x_0,\xi_0))\frac{ds'}{|(P_{x_0,\xi_0}^N)'\circ (P_{x_0,\xi_0}^N\rceil_{B_k})^{-1}(s')|}.$$

\begin{rema} We observe that $P_{x_0,\xi_0}^N(B_k)$ is an interval whose length is bounded from below by $b^{1+3J+3\theta}$ and from above by $\ml{O}(b^{1+2J+2\theta}).$ In the following, we will denote by $\tilde{B}_k$ this interval.
\end{rema}
We now write
$$\left| \frac{1}{\left|(P_{x_0,\xi_0}^J)'\circ (P_{x_0,\xi_0}^J\rceil_{B_k})^{-1}(s')\right|} - \frac{1}{\left|(P_{x_0,\xi_0}^J)'(s_k))\right|} \right| 
\leq \sup_{s \in B_k} \bigg| \frac{|(P_{x_0,\xi_0}^J)''(s)}{(P_{x_0,\xi_0}^J)'\big( s \big)^2} \bigg| \times |(P_{x_0,\xi_0}^J\rceil_{B_k})^{-1}(s') - s_k |,$$ 
where $s_k$ is the left end point of $B_k$. In particular, we find that
$$\tilde{I}_{x_0,\xi_0}^k(\eps, b, T) =
 \frac{1}{|B_k|\left|(P_{x_0,\xi_0}^J)'(s_k))\right|}\int_{\tilde{B}_k}\tilde{a} \circ H_u^{-\eps s' e^{T\|\xi_0\|}
}(\rho(\eps,T,x_0,\xi_0))ds'+\ml{O}(b),$$
where the constant in the remainder is uniform for $1\leq k\leq L$ (and for $(x_0,\xi_0)$ in the allowed energy layers). 

\begin{rema}
Taking the particular case $\tilde{a}=1$, we also observe that 
$$|\tilde{B}_k|=|B_k|\left|(P_{x_0,\xi_0}^J)'(s_k))\right|(1+\ml{O}(b)).$$
\end{rema}
In the end, we have obtained that
\begin{equation}\label{e:integral-we-can-estimate}
 \tilde{I}_{x_0,\xi_0}^k(\eps, b, T) =
 \frac{1}{|\tilde{B}_k|}\int_{\tilde{B}_k}\tilde{a} \circ H_u^{-\eps s' e^{T\|\xi_0\|}
}(\rho(\eps,T,x_0,\xi_0))ds'+\ml{O}(b),
\end{equation}
where $\tilde{B}_k$ is an interval whose length is bounded from below by $b^{1+3J+3\theta}$.

\begin{rema}\label{r:concl-repara} Let $(x_0,\xi_0)$ be an element in $K_V^J(\eta_0)$ satisfying $1/2-\delta_1\leq p_0(x_0,\xi_0)\leq 1/2+\delta_1$, 
where $\delta_1>0$ was given by lemma~\ref{l:jacob}. Combining~\eqref{e:splitting-the-integral} and~\eqref{e:integral-we-can-estimate}, we have that, for every $0<\theta<1$,
$$\tilde{I}_{x_0,\xi_0}(\eps, b, T)=\frac{1}{L}\sum_{k=1}^L\frac{1}{|\tilde{B}_k|}\int_{\tilde{B}_k}\tilde{a} \circ H_u^{-\eps s e^{T\|\xi_0\|}
}(\rho(\eps,T,x_0,\xi_0))ds+\ml{O}\left(b^{\frac{\theta}{2(J+1)}}\right),$$
where $\tilde{B}_k$ is an interval whose length is bounded from below by $b^{1+3J+3\theta}$ and which depends on $(x_0,\xi_0)$ (but not on $\eps$ and $T$).
\end{rema}

\subsubsection{Unique ergodicity}\label{sss:uni-erg}

We can now conclude using unique ergodicity of the horocycle flow~\cite{Fu73, Mar77}, which implies 
$$r(T_0):=\sup_{|\tau|\geq T_0}\sup\left\{\left|\frac{1}{\tau}\int_0^{\tau}\tilde{a}\circ H_u^{s'}(\rho)ds'-\int_{S^*M}\tilde{a}dL\right|:\rho\in S^*M\right\}$$
is a nonincreasing function which tends to $0$ as $T_0\rightarrow +\infty$. Then, one has, for every $1\leq k\leq L$,
$$\frac{1}{|\tilde{B}_k|}\int_{\tilde{B}_k}\tilde{a} \circ H_u^{-\eps s e^{T\|\xi_0\|}
}(\rho(\eps,T,x_0,\xi_0))ds=\int_{S^*M}\tilde{a}dL+r\left(b^{1+3J+3\theta}\eps e^{T\|\xi_0\|}\right).$$
Thanks to remark~\ref{r:concl-repara} and to~\eqref{e:integral-remainder}, it implies that
$$I_{x_0,\xi_0}(\eps, b, T)=\int_{S^*M}\tilde{a}dL+r\left(b^{1+3J+3\theta}\eps e^{T\|\xi_0\|}\right)+\ml{O}\left(b^{\frac{\theta}{2(J+1)}}\right)
+ \ml{O}(\eps T)+\ml{O}(\eps^{1+\gamma}e^{T\|\xi_0\|})+\ml{O}(b^{N+1}\eps e^{T\|\xi_0\|}).$$
As $N$ can be chosen arbitrarly large, $\theta>0$ arbitrarly small and $\gamma$ arbitrarly close to $1/2$, this concludes the proof of proposition~\ref{p:dynamics}, where we took $\eps_0\rightarrow 0^+$ with $b=\eps_0^{\nu_2}$, $T=c|\log\eps_0|$, and $\eps\in[\eps_0^{1+\nu_1},\eps_0]$.

\section{Decay of the quantum Loschmidt echo}
\label{s:loschmidt}

Motivated by the fact that the unitarity of the quantum propagator $e^{-\frac{i\tau \hat{P}_0(\hbar)}{\hbar}}$ (with $\hat{P}_0(\hbar):=-\frac{\hbar^2\Delta_g}{2}$) 
does not allow one to observe any sensitivity to perturbations of the initial conditions, Peres argued in~\cite{Pe84} that both the classical and the quantum system should 
be sensitive to pertubations of the Hamiltonian. For that reason, he suggested that one should look at \emph{perturbations of the Hamiltonian for fixed sequences of initial data} 
in order to study the influence of perturbations both in the classical and in the quantum setting. For the quantum counterpart, he proposed to look at the overlap between the solutions of the unperturbed and the perturbed Schr\"odinger equation for fixed initial data. 
Precisely, given a normalized sequence of initial data $(\psi_{\hbar})_{0<\hbar\leq 1}$ and $V\in\ml{C}^{\infty}(M,\IR)$, one should study the following quantity:
\begin{equation}\label{e:loschmidt}\mathbf{F}_{\hbar,\eps}^V(\psi_{\hbar},\tau)
:=\left|\left\la e^{-\frac{i\tau \hat{P}_0(\hbar)}{\hbar}}\psi_{\hbar},e^{-\frac{i\tau (\hat{P}_0(\hbar)+\eps V)}{\hbar}}\psi_{\hbar}\right\ra\right|^2. 
\end{equation}
Peres expected that this overlap should typically decay for any quantum system, and that it should decay to a much lower value for chaotic sytems than for regular ones. One 
of the main difficutly one encounters when studying this overlap is that we want to understand the limit as $\tau\rightarrow+\infty$ but also as $\hbar\rightarrow 0$ and 
$\eps\rightarrow 0$. In~\cite{JalPas01}, motivated by experiments in nuclear magnetic resonance, Jalabert and Pastawski were also interested\footnote{It seems that the terminology 
``quantum Loschmidt echo'' was introduced in this article.} in studying properties of $\mathbf{F}_{\hbar,\eps}^V(\psi_{\hbar},\tau)$ for chaotic systems. They considered 
the situation where the initial data are given by a sequence of coherent states which are microlocalized at some point $(x_0,\xi_0)$ in phase space, and where the potential is 
given by $V(x)=u_1V_1(x)+\ldots +u_JV_J(x)$, where the $(u_i)_{i=1,\ldots J}$ are independent random variables. They observed that, on average and for a certain range of parameters, 
the quantum Loschmidt echo is 
exponentially decaying with a rate which is asymptotically given by the mean of the Lyapunov exponents of the classical system. This regime can be observed for times of order 
the Ehrenfest time, and for strong enough perturbations (meaning that $\eps\rightarrow 0$ is large compared with the mean level spacing of $\hat{P}_0(\hbar)$). This regime is 
known as the \emph{Lyapunov regime}. In~\cite{JaSiBe01}, it was emphasized that the situation becomes slightly more complicated for smaller perturbations, and that 
one can observe other kind of regimes like the so-called Fermi golden rule regime (exponential decay with a rate depending on $\eps$). Besides the 
above works, many progresses have been made recently in the physics literature concerning the asymptotic behaviour of $\mathbf{F}_{\hbar,\eps}^V(\psi_{\hbar},\tau)$, and we 
refer to~\cite{GPSZ06, JaPe09, GJPW12} for detailed reviews on these questions. It is important to note that, when studying this problem, the decay rates depend in a subtle way 
on various quantities like $\eps$, $\tau$ and $\hbar$, but also the shape (or the statistical properties) of the perturbation, and the choice of initial data.

The aim of this last section is to use the tools developed in the previous section  for the study of the asymptotic properties of the quantum Loschmidt echo on surfaces with constant negative curvature. We will look at strong 
perturbations, namely $\eps\gg\sqrt{\hbar}$ and at scales of times $\geq|\log\eps|$. For simplicity of exposition\footnote{As in the statements of 
section~\ref{s:semiclassical}, our arguments could be generalized to deal with slightly more general normalized initial data satisfying~\eqref{e:hosc2} for some small 
enough $\nu_0>0$.}, we will only consider sequences of normalized initial data satisfying~\eqref{e:hosc}, i.e.
$$\lim_{R\rightarrow+\infty}\limsup_{\hbar\rightarrow 0^+}\left\Vert \mathbf{1}_{\left[  1-R\hbar,1+R\hbar\right]
}\left(  -\hbar^{2}\Delta\right)  \psi_{\hbar}-\psi_{\hbar}\right\Vert _{L^{2}\left(  M\right)
}=0,\ \text{and}\ \forall\ 0<\hbar\leq 1,\ \|\psi_{\hbar}\|_{L^2(M)}=1.$$
Our main results on these questions are propositions~\ref{p:loschmidt1} and~\ref{p:loschmidt2} which state that the quantum Loschmidt echo becomes asymptotically strictly 
less than $1$. Compared with the results described above, it does not provide any decay rate but is valid for \emph{any} sequence of initial data (with a proper 
localization in frequencies) and for any $V$ satisfying a certain explicit admissibility condition. In particular, we do not have to average over a family of perturbations. 

\subsection{Preliminary lemma}

As a first step in the study of the properties of the quantum Loschmidt echo, we study the restriction of the perturbed propagator $e^{-\frac{it}{\hbar}(\hat{P}_0(\hbar)+\eps_{\hbar}V)}$ on the eigenspaces of the unperturbed Schr\"odinger operator $\hat{P}_0(\hbar)$. The following lemma is the key result of this section:

\begin{lemm}\label{l:loschmidt} Suppose that $\dim(M)=2$, and that $M$ has constant negative sectional curvature $K\equiv -1$. Suppose that $\lim_{\hbar\rightarrow 0}\eps_{\hbar}=0$, and that there exists $0<\nu<\frac{1}{2}$ such that, for $\hbar>0$ small enough, one has
$$\eps_{\hbar}\geq \hbar^{\nu}.$$
Let $J$ be a nonnegative integer such that
$$\ml{C}_V^J:=\bigcap_{j=0}^{J}\left\{(x_0,\xi_0)\in S^*M:(X_0^j.f_V)(x_0,\xi_0)=0\right\}=\emptyset.$$
Let $c_1,c_2,\nu_0>0$ satisfying $1+(3J+1)\nu_0<c_1\leq c_2<\min\left\{3/2,1/(2\nu)\right\}.$

Then, there exists $0<c_0<1$ such that, for every sequence $(\tau_{\hbar})_{0<\hbar\leq 1}$ satisfying
$$c_1|\log(\eps_{\hbar})|\leq\tau_{\hbar}\leq c_2|\log(\eps_{\hbar})|,$$
one has
$$\limsup_{\hbar\rightarrow 0^+}\left\|\Pi(1,\hbar\eps_{\hbar}^{-\nu_0})
e^{-\frac{i\tau_{\hbar}}{\hbar}(\hat{P}_0(\hbar)+\eps_{\hbar}V)}
\Pi(1,\hbar\eps_{\hbar}^{-\nu_0})
\right\|_{L^2(M)\rightarrow L^2(M)}\leq c_0,$$
where
$$\Pi(1,\hbar\eps_{\hbar}^{-\nu_0}):=\mathbf{1}_{[1-\eps_{\hbar}^{-\nu_0}\hbar,1+\eps_{\hbar}^{-\nu_0}\hbar]}(-\hbar^2\Delta_g).$$
\end{lemm}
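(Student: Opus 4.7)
The plan is to reduce the operator-norm estimate to a matrix-element bound to which Proposition~\ref{p:semicl} applies, and then to exploit the separation of scales between the window half-width $A=\hbar\eps_{\hbar}^{-\nu_{0}}/2$ of $\Pi$ and the perturbation-induced energy spread of order $\eps_{\hbar}$. Using $\Pi=\Pi^{*}=\Pi^{2}$, one has
$$\|\Pi\,e^{-i\tau_{\hbar}\hat{P}_{\eps}(\hbar)/\hbar}\,\Pi\|_{L^{2}\to L^{2}}^{2}=\sup_{\substack{\psi\in\mathrm{ran}(\Pi)\\ \|\psi\|=1}}\langle u_{\hbar}^{\eps}(\tau_{\hbar}),\Pi u_{\hbar}^{\eps}(\tau_{\hbar})\rangle,$$
so it suffices to bound the right-hand side uniformly in $\psi$ by some $c_{0}^{2}<1$. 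Every such $\psi$ satisfies the admissibility condition~\eqref{e:hosc2} with parameter $\nu_{0}$, so Proposition~\ref{p:semicl} is available on $u_{\hbar}^{\eps}(\tau_{\hbar})$ (and under $\ml{C}_{V}^{J}=\emptyset$ forces its semiclassical measure to be $L$), while the short-time invariance~\eqref{e:ev-qm} is available on $\psi$ itself.

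I would then pass to the Heisenberg picture: $\langle u_{\hbar}^{\eps}(\tau_{\hbar}),\Pi u_{\hbar}^{\eps}(\tau_{\hbar})\rangle=\langle\psi,\mathbf{1}_{W}(\hat{P}_{0}(\tau_{\hbar}))\psi\rangle$, where $\hat{P}_{0}(\tau):=e^{i\tau\hat{P}_{\eps}/\hbar}\hat{P}_{0}\,e^{-i\tau\hat{P}_{\eps}/\hbar}$ and $W$ is the narrow $\hat{P}_0$-window around $1/2$. A Duhamel computation combined with long-time Egorov, valid in the regime $\tau_{\hbar}\leq c_{2}|\log\eps_{\hbar}|<|\log\hbar|/2$, gives
$$\hat{P}_{0}(\tau_{\hbar})=\hat{P}_{0}+\eps_{\hbar}\,\Oph\bigl(V\circ G_{\eps}^{\tau_{\hbar}}-V\bigr)+\ml{O}(\hbar),$$
so that the Heisenberg-evolved unperturbed energy acquires a fluctuation of size $\eps_{\hbar}$ driven by the classical displacement $V\circ G_{\eps}^{\tau_{\hbar}}-V$ along the perturbed geodesic flow.

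Combining this expansion with semiclassical calculations for the second and fourth central moments of $\hat{P}_{0}$ on $u_{\hbar}^{\eps}(\tau_{\hbar})$, and using the non-degeneracy provided by $\ml{C}_{V}^{J}=\emptyset$ together with mixing of the geodesic flow (which gives $\int_{S^{*}M}(V\circ G_{\eps}^{\tau_{\hbar}}-V)^{2}\,dL\to 2\mathrm{Var}_{L}(V)>0$ for non-constant $V$), I expect the matching bounds
$$\langle u_{\hbar}^{\eps}(\tau_{\hbar}),(\hat{P}_{0}-\tfrac{1}{2})^{2}u_{\hbar}^{\eps}(\tau_{\hbar})\rangle\asymp\eps_{\hbar}^{2},\qquad\langle u_{\hbar}^{\eps}(\tau_{\hbar}),(\hat{P}_{0}-\tfrac{1}{2})^{4}u_{\hbar}^{\eps}(\tau_{\hbar})\rangle\lesssim\eps_{\hbar}^{4}.$$
Since $A\ll\eps_{\hbar}$ (which follows from $\eps_{\hbar}\geq\hbar^{\nu}$ with $\nu<1/2$ and $\nu_{0}$ small enough), a Paley-Zygmund / Chebyshev argument on the spectral distribution of $\hat{P}_{0}$ along $u_{\hbar}^{\eps}(\tau_{\hbar})$ then forces $\langle\psi,\mathbf{1}_{W}(\hat{P}_{0}(\tau_{\hbar}))\psi\rangle\leq c_{0}^{2}<1$ uniformly in $\psi\in\mathrm{ran}(\Pi)$.

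The main obstacle is that $\Pi$ is a sharp spectral cutoff, not the quantization of a fixed smooth symbol: any smooth upper bound $\Pi\leq\chi(\hat{P}_{0})$ with $\chi\geq 0$ forces $\chi(1)\geq 1$, and Proposition~\ref{p:semicl} then only delivers the trivial limit $\leq 1$ after integration against Liouville measure. The non-trivial gap $1-c_{0}^{2}>0$ has therefore to be extracted by resolving $\Pi$ at its intrinsic scale $\hbar\eps_{\hbar}^{-\nu_{0}}\ll\eps_{\hbar}$, and this is exactly what the Heisenberg identity for $\hat{P}_{0}(\tau_{\hbar})$, together with the variance and fourth-moment estimates above, is designed to achieve.
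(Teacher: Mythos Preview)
Your moment--Paley--Zygmund strategy is genuinely different from the paper's proof and can be made to work, but the justification you give for the second-moment lower bound has a gap. The paper argues by contradiction: assuming the norm is $1-o(1)$ along a subsequence, it picks near-extremal $\tilde\psi_{\hbar_n}$ and computes $A_n=\langle u_{\hbar_n}^\eps,(V^*)^2 u_{\hbar_n}^\eps\rangle$ in two ways. One way (Proposition~\ref{p:semicl} with $\ml{C}_V^J=\emptyset$) gives $A_n\to\int (V^*)^2\,dL>0$. The other way exploits the contradiction hypothesis through the key commutation identity~\eqref{e:lo-step2}, $Ve^{-i\tau\hat P_\eps/\hbar}\tilde\psi=e^{-i\tau\hat P_\eps/\hbar}V\tilde\psi+o(1)$, which lets the paper pull $V$ through the propagator and reduce $A_n$ to an expression that the proof of Proposition~\ref{p:semicl} shows is $o(1)$. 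Your direct approach avoids this commutation trick entirely and, if carried through, yields an explicit $c_0$ in terms of $\text{Var}_L(V)$ and the fourth-moment constant.

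The gap is in your claim that $m_2=\langle u_\hbar^\eps,(\hat P_0-\tfrac12)^2 u_\hbar^\eps\rangle\asymp\eps_\hbar^2$ follows from mixing and $\int_{S^*M}(V\circ G_\eps^{\tau_\hbar}-V)^2\,dL\to 2\text{Var}_L(V)$. After your Duhamel expansion you land on $m_2/\eps_\hbar^2=\langle\psi,V^2\psi\rangle-2\text{Re}\langle\psi,V\Oph(V\circ G_\eps^{\tau_\hbar})\psi\rangle+\langle u_\hbar^\eps,V^2 u_\hbar^\eps\rangle+o(1)$; only the last term sees the Liouville measure via Proposition~\ref{p:semicl}, while the first two are computed against the \emph{initial} state $\psi$, whose semiclassical measure $\mu_0$ is arbitrary. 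So ``sc measure of $u_\hbar^\eps$ is $L$'' plus mixing does not give the lower bound. What does work is to handle the cross term by unpacking the proof of Proposition~\ref{p:semicl}: use~\eqref{e:ev-qm} to average over $s\in[0,\eps_\hbar^{\nu_2}]$, note $V\circ G_0^s=V+O(s)$, and apply Proposition~\ref{p:dynamics} (uniformly, since $\ml{C}_V^J=\emptyset$) to get $\langle\psi,V\Oph(V\circ G_\eps^{\tau_\hbar})\psi\rangle\to\bar V\,\langle\psi,V\psi\rangle$. This yields $m_2/\eps_\hbar^2=\langle\psi,(V^*)^2\psi\rangle+\text{Var}_L(V)+o(1)\geq\text{Var}_L(V)>0$, uniformly in $\psi\in\text{ran}\,\Pi$. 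Your fourth-moment bound $m_4=O(\eps_\hbar^4)$ follows from the same expansion together with routine commutator estimates ($\|[\hat P_0,V]w\|=O(\hbar)$ for $w$ microlocalized near $S^*M$), after which Paley--Zygmund does give $\|\Pi u_\hbar^\eps\|^2\leq 1-\tfrac14 m_2^2/m_4\leq c_0^2<1$ uniformly.
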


This lemma shows that, under some geometric assumptions on the perturbation, the norm of the perturbed propogator restricted to the eigenspaces of $\hat{P}_0(\hbar)$ is uniformly strictly less than $1$. In appendix~\ref{a:example}, it is shown that the assumption on $V$ is ``generic''.

\begin{proof} We proceed by contradiction, i.e. we suppose that, for every integer $n\geq 1$, one can find a sequence
$(\tau_{\hbar}^n)_{0<\hbar\leq 1}$ satisfying
$$c_1|\log(\eps_{\hbar})|\leq\tau_{\hbar}^n\leq c_2|\log(\eps_{\hbar})|,$$
and such that
$$\limsup_{\hbar\rightarrow 0^+}\left\|\Pi(1,\hbar\eps_{\hbar}^{-\nu_0})
e^{-\frac{i\tau_{\hbar}^n}{\hbar}(\hat{P}_0(\hbar)+\eps_{\hbar}V)}
\Pi(1,\hbar\eps_{\hbar}^{-\nu_0})
\right\|_{L^2(M)\rightarrow L^2(M)}\geq 1-\frac{1}{n}.$$
Thus, for any integer $n\geq 1$, one can find\footnote{Without loss of generality, we can suppose $0<\hbar_{n+1}<\hbar_n$.} $0<\hbar_n\leq\frac{1}{n}$ and $\psi_{\hbar_n}$ such that $\|\psi_{\hbar_n}\|_{L^2}=1$, $c_1|\log\eps_{\hbar_n}|\leq \tau_{\hbar_n}^n\leq c_2|\log\eps_{\hbar_n}|$, and
\begin{equation}\label{e:contradiction-lemma}\left\|\Pi(1,\hbar_n\eps_{\hbar_n}^{-\nu_0})
e^{-\frac{i\tau_{\hbar_n}^n}{\hbar_n}(\hat{P}_0(\hbar_n)+\eps_{\hbar_n}V)}
\Pi(1,\hbar_n\eps_{\hbar_n}^{-\nu_0})\psi_{\hbar_n}
\right\|_{L^2(M)\rightarrow L^2(M)}\geq 1-\frac{2}{n}.\end{equation}
We define $\tilde{\psi}_{\hbar_n}:=\Pi(1,\hbar_n\eps_{\hbar_n}^{-\nu_0})\psi_{\hbar_n}$, which satisfies, thanks to the previous inequality, $\lim_{n\rightarrow +\infty}\|\tilde{\psi}_{\hbar_n}\|=1$. We will now use two different procedures to compute the limit of the following quantity:
$$A_n:=\int_M(V^*)^2\left|e^{-\frac{i\tau_{\hbar_n}^n}{\hbar_n}(\hat{P}_0(\hbar_n)+\eps_{\hbar_n}V)}\tilde{\psi}_{\hbar_n}
\right|^2dvol_g,$$
where $V^*=V-\int_{S^*M} V\circ \pi dL$ (with $\pi(x,\xi)=x$), and $vol_g$ is the Riemannian volume on $M$. Using proposition~\ref{p:semicl} and the fact that $\ml{C}_V^J$ is empty, we first obtain that
$$\lim_{n\rightarrow+\infty}A_n=\int_{S^*M}\left(V\circ\pi-\int_{S^*M} V\circ \pi dL\right)^2dL.$$

We will now compute the limit of $A_n$ in a slightly different manner, and then get the contradiction. For that purpose, we will admit that the following property holds:
\begin{equation}\label{e:lo-step2}Ve^{-\frac{i\tau_{\hbar_n}^n}{\hbar_n}(\hat{P}_0(\hbar_n)+\eps_{\hbar_n}V)}\tilde{\psi}_{\hbar_n}=e^{-\frac{i\tau_{\hbar_n}^n}{\hbar_n}(\hat{P}_0(\hbar_n)+\eps_{\hbar_n}V)}V\tilde{\psi}_{\hbar_n}+o_{L^2}(1).
\end{equation}
We postpone the proof of this equality to the end, and we first show how it allows us to conclude. As in paragraph~\ref{sss:proof-semicl}, we introduce a smooth cutoff function $\chi_{\delta}$ 
to microlocalize the symbols near $S^*M$. Combining this to relation~\eqref{e:lo-step2}, we obtain:
$$A_n=\left\la \tilde{\psi}_{\hbar_n}, \Op_{\hbar_n}(V^*\chi_{\delta}\circ p_0)\mathbf{V}_{n}\tilde{\psi}_{\hbar_n}\right\ra+o(1).$$ 
where we set
$$\mathbf{V}_{n}:=e^{\frac{i\tau_{\hbar_n}^n}{\hbar_n}(\hat{P}_0(\hbar_n)+\eps_{\hbar_n}V)}\Op_{\hbar_n}(V^*\chi_{\delta}\circ p_{\eps_{\hbar_n}})e^{-\frac{i\tau_{\hbar_n}^n}{\hbar_n}(\hat{P}_0(\hbar_n)+\eps_{\hbar_n}V)}.$$
As in paragraph~\ref{sss:proof-semicl}, we can use the invariance of the state $\tilde{\psi}_{\hbar_n}$ on short intervals of time, precisely~\eqref{e:ev-qm}. We find that
$$A_n=\frac{1}{\eps_{\hbar_n}^{\nu_2}}\int_0^{\eps_{\hbar_n}^{\nu_2}}\left\la \tilde{\psi}_{\hbar_n}, e^{-\frac{is\hbar_n\Delta_g}{2}}\Op_{\hbar_n}(V^*\chi_{\delta}\circ p_0)e^{\frac{is\hbar_n\Delta_g}{2}}e^{-\frac{is\hbar_n\Delta_g}{2}}\mathbf{V}_{n}e^{\frac{is\hbar_n\Delta_g}{2}}\tilde{\psi}_{\hbar_n}\right\ra ds+o(1),$$
for some fixed $\nu_2>\nu_0$. Using Egorov and Calder\'on-Vaillancourt theorems~\cite{Zw12} (Chapters $4$ and $11$), we have  
$$e^{-\frac{is\hbar_n\Delta_g}{2}}\Op_{\hbar_n}(V^*\chi_{\delta}\circ p_0)e^{\frac{is\hbar_n\Delta_g}{2}}=\Op_{\hbar_n}(V^*\chi_{\delta}\circ p_0)+o_{L^2\rightarrow L^2}(1),$$ 
uniformly for $s\in[0,\eps_{\hbar_n}^{\nu_2}]$, and thus 
$$A_n=\left\la \tilde{\psi}_{\hbar_n}, \Op_{\hbar_n}(V^*\chi_{\delta}\circ p_0)\left(\frac{1}{\eps_{\hbar_n}^{\nu_2}}\int_0^{\eps_{\hbar_n}^{\nu_2}}e^{-\frac{is\hbar_n\Delta_g}{2}}\mathbf{V}_{n}e^{\frac{is\hbar_n\Delta_g}{2}}ds\right)\tilde{\psi}_{\hbar_n}\right\ra +o(1).$$
So, it remains to analyse the operator $\frac{1}{\eps_{\hbar_n}^{\nu_2}}\int_0^{\eps_{\hbar_n}^{\nu_2}}e^{-\frac{is\hbar_n\Delta_g}{2}}\mathbf{V}_{n}e^{\frac{is\hbar_n\Delta_g}{2}}ds$ whose complete expression is
$$\frac{1}{\eps_{\hbar_n}^{\nu_2}}\int_0^{\eps_{\hbar_n}^{\nu_2}}e^{-\frac{is\hbar_n\Delta_g}{2}}e^{\frac{i\tau_{\hbar_n}^n}{\hbar_n}(\hat{P}_0(\hbar_n)+\eps_{\hbar_n}V)}\Op_{\hbar_n}(V^*\chi_{\delta}\circ p_{\eps_{\hbar_n}})e^{-\frac{i\tau_{\hbar_n}^n}{\hbar_n}(\hat{P}_0(\hbar_n)+\eps_{\hbar_n}V)}e^{\frac{is\hbar_n\Delta_g}{2}}ds.$$
The proof of proposition~\ref{p:semicl} was in fact reduced to studying the convergence of this kind of operator -- see equations~\eqref{e:step1} and~\eqref{e:step2}. In particular, we proved that, modulo small error terms, this operator is a $\hbar_n$-pseudodifferential operator with principal symbol
$$(x,\xi)\mapsto \chi_{\delta}\circ p_{0}(x,\xi)\frac{1}{\eps_{\hbar_n}^{\nu_2}}\int_0^{\eps_{\hbar_n}^{\nu_2}}V^*\circ G_{\eps_{\hbar_n}}^{\tau_{\hbar_n}}\circ G_0^s(x,\xi)ds.$$
Combining the facts that $\ml{C}_V^J$ is empty and that $\int_{S^*M}V^*dL=0$ to proposition~\ref{p:dynamics}, it can be shown that the norm of this operator is in fact $o(1)$ as $n\rightarrow +\infty$. Thus, one has
$$\int_{S^*M}\left(V\circ\pi-\int_{S^*M} V\circ \pi dL\right)^2dL=\lim_{n\rightarrow +\infty} A_n=0,$$
which provides the contradiction as $\ml{C}_V^J$ is empty.

It remains now to verify that~\eqref{e:lo-step2} holds. We observe that, up to this point, we did not use all the informations contained in~\eqref{e:contradiction-lemma}. In particular, 
by construction of $\tilde{\psi}_{\hbar_n}$, one knows that, as $n\rightarrow+\infty$, 
$$r_n:=\left\|\left(\text{Id}_{L^2}-\Pi(1,\hbar_n\eps_{\hbar_n}^{-\nu_0})\right)e^{-\frac{i\tau_{\hbar_n}^n}{\hbar_n}(\hat{P}_0(\hbar_n)+\eps_{\hbar_n}V)}\tilde{\psi}_{\hbar_n}\right\|_{L^2}=o(1).$$
Then, we fix a bounded sequence $(\delta_n)_{n\geq 1}$ such that $\eps_{\hbar_n}\delta_n^{-1}\rightarrow 0$, and $\delta_n\eps_{\hbar_n}^{-1}r_n\rightarrow 0$ as $n\rightarrow+\infty.$ We let $0\leq \chi_1\leq 1$ 
be a smooth cutoff function which is equal to $1$ in a small neighborhood of $0$ and $0$ outside a slightly larger interval, say $[-1/2,1/2]$. As $1+\nu_0<\frac{1}{\nu}$ by assumption, and as 
$\eps_{\hbar_n}\delta_n^{-1}\rightarrow 0$, we find that $\hbar_n\eps_{\hbar_n}^{-\nu_0}\delta_n^{-1}\rightarrow 0$. As $\tilde{\psi}_{\hbar_n}=\Pi(1,\hbar_n\eps_{\hbar_n}^{-\nu_0})\tilde{\psi}_{\hbar_n}$, this implies that, for $n$ large enough,
\begin{equation}\label{e:cutoff-loschmidt}\tilde{\psi}_{\hbar_n}=\chi_1\left(\frac{\hat{P}_0(\hbar_n)-1/2}{\delta_n}\right)\tilde{\psi}_{\hbar_n}.\end{equation}
Using functional calculus for pseudodifferential operators (Ch.~$14$ in~\cite{Zw12}) and the fact that $\delta_n\geq\hbar^{\nu}$ for some $0<\nu<1/2$, we know that the operators 
$$\chi_1\left(\frac{\hat{P}_0(\hbar_n)-1/2}{\delta_n}\right),\ \text{and}\ \chi_1\left(\frac{\hat{P}_{\eps_{\hbar_n}}(\hbar_n)-1/2}{\delta_n}\right)$$
are $\hbar$-pseudodifferential operators in $\Psi_{\nu}^{-\infty,0}(M)$. Then, using the Calder\'on-Vaillancourt theorem~\cite{Zw12} (Ch.~$5$), we find that
\begin{equation}\label{e:taylor}\left\|\chi_1\left(\frac{\hat{P}_0(\hbar_n)-1/2}{\delta_n}\right)-\chi_1\left(\frac{\hat{P}_{\eps_{\hbar_n}}(\hbar_n)-1/2}{\delta_n}\right)\right\|_{L^2\rightarrow L^2}=\ml{O}(\eps_{\hbar_n}\delta_n^{-1})=o(1).\end{equation}
Using~\eqref{e:cutoff-loschmidt} and twice this equality, we find that
$$Ve^{-\frac{i\tau_{\hbar_n}^n}{\hbar_n}(\hat{P}_0(\hbar_n)+\eps_{\hbar_n}V)}\tilde{\psi}_{\hbar_n}=V\chi_1\left(\frac{\hat{P}_0(\hbar_n)-1/2}{\delta_n}\right)e^{-\frac{i\tau_{\hbar_n}^n}{\hbar_n}(\hat{P}_0(\hbar_n)+\eps_{\hbar_n}V)}\tilde{\psi}_{\hbar_n}+o(1).$$
Thanks to the composition properties of $\hbar$-pseudodifferential operators, this can also be rewritten as
\begin{equation}\label{e:commuting-loschmidt}
Ve^{-\frac{i\tau_{\hbar_n}^n}{\hbar_n}(\hat{P}_0(\hbar_n)+\eps_{\hbar_n}V)}\tilde{\psi}_{\hbar_n}=\chi_1\left(\frac{\hat{P}_0(\hbar_n)-1/2}{\delta_n}\right)Ve^{-\frac{i\tau_{\hbar_n}^n}{\hbar_n}(\hat{P}_0(\hbar_n)+\eps_{\hbar_n}V)}\tilde{\psi}_{\hbar_n}+o(1).
\end{equation}
We now remark that
$$\left\|\left(\frac{\hat{P}_0(\hbar_n)-1/2}{\eps_{\hbar_n}}\right)\chi_1\left(\frac{\hat{P}_0(\hbar_n)-1/2}{\delta_n}\right)e^{-\frac{i\tau_{\hbar_n}^n}{\hbar_n}(\hat{P}_0(\hbar_n)+\eps_{\hbar_n}V)}\tilde{\psi}_{\hbar_n}\right\|_{L^2}=\ml{O}(\hbar_n\eps_{\hbar_n}^{-1-\nu_0})+\ml{O}(r_n\delta_n\eps_{\hbar_n}^{-1}).$$
Implementing this property in~\eqref{e:commuting-loschmidt}, we get
$$Ve^{-\frac{i\tau_{\hbar_n}^n}{\hbar_n}(\hat{P}_0(\hbar_n)+\eps_{\hbar_n}V)}\tilde{\psi}_{\hbar_n}=\chi_1\left(\frac{\hat{P}_0(\hbar_n)-1/2}{\delta_n}\right)e^{-\frac{i\tau_{\hbar_n}^n}{\hbar_n}(\hat{P}_0(\hbar_n)+\eps_{\hbar_n}V)}\frac{\hat{P}_{\eps_{\hbar_n}}(\hbar_n)}{\eps_{\hbar_n}}\tilde{\psi}_{\hbar_n}+o(1),$$
which implies
$$Ve^{-\frac{i\tau_{\hbar_n}^n}{\hbar_n}(\hat{P}_0(\hbar_n)+\eps_{\hbar_n}V)}\tilde{\psi}_{\hbar_n}=\chi_1\left(\frac{\hat{P}_0(\hbar_n)-1/2}{\delta_n}\right)e^{-\frac{i\tau_{\hbar_n}^n}{\hbar_n}(\hat{P}_0(\hbar_n)+\eps_{\hbar_n}V)}V\tilde{\psi}_{\hbar_n}+o(1),$$
Applying~\eqref{e:cutoff-loschmidt},~\eqref{e:taylor} and the composition rule for pseudodifferential operators in the other way, we finally obtain
$$Ve^{-\frac{i\tau_{\hbar_n}^n}{\hbar_n}(\hat{P}_0(\hbar_n)+\eps_{\hbar_n}V)}\tilde{\psi}_{\hbar_n}=e^{-\frac{i\tau_{\hbar_n}^n}{\hbar_n}(\hat{P}_0(\hbar_n)+\eps_{\hbar_n}V)}V\tilde{\psi}_{\hbar_n}+o(1),$$
which is exactly equality~\eqref{e:lo-step2}.
\end{proof}

\subsection{Properties of the quantum Loschmidt echo}

\subsubsection{Times of order $|\log\eps_{\hbar}|$}

As a direct application of lemma~\ref{l:loschmidt}, we obtain the following property of the quantum Loschmidt echo:
\begin{prop}\label{p:loschmidt1} Suppose that $\dim(M)=2$, and that $M$ has constant negative sectional curvature $K\equiv -1$. Suppose also that 
$$\ml{C}_V=\bigcap_{j=0}^{+\infty}\left\{(x_0,\xi_0)\in S^*M:(X_0^j.f_V)(x_0,\xi_0)=0\right\}=\emptyset.$$

Let $(\eps_{\hbar})_{0<\hbar\leq 1}$ be a sequence such that $\lim_{\hbar\rightarrow 0}\eps_{\hbar}=0$, and such that there exists $0<\nu<\frac{1}{2}$ verifying, for $\hbar>0$ small enough,
$$\eps_{\hbar}\geq \hbar^{\nu}.$$
Let $1<c_1\leq c_2<\min\{3/2,1/(2\nu)\}$.

Then, there exists $0\leq c_0<1$ such that, for every sequence $(\psi_{\hbar})_{0<\hbar\leq 1}$ satisfying~\eqref{e:hosc}, and for every sequence $(\tau_{\hbar})_{0<\hbar\leq 1}$ satisfying for $\hbar>0$ small enough
$$c_1|\log\eps_{\hbar}|\leq\tau_{\hbar}\leq c_2|\log\eps_{\hbar}|,$$
one has
$$0\leq \limsup_{\hbar\rightarrow 0}\mathbf{F}_{\hbar,\eps_{\hbar}}^V(\psi_{\hbar},\tau_{\hbar})\leq c_0<1.$$
\end{prop}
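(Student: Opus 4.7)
The first task is to reduce the infinite-order vanishing condition $\ml{C}_V=\emptyset$ to a finite-order one compatible with Lemma~\ref{l:loschmidt}. Since the sets $\ml{C}_V^J$ form a decreasing nested family of closed subsets of the compact space $S^*M$ whose intersection $\ml{C}_V$ is empty, the finite intersection property gives some $J\geq 0$ with $\ml{C}_V^J=\emptyset$. With $c_1>1$ fixed, I would then pick $\nu_0>0$ small enough so that $1+(3J+1)\nu_0<c_1$, placing the parameters inside the hypotheses of Lemma~\ref{l:loschmidt}.

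\textbf{Reduction to the range of $\Pi(1,\hbar\eps_\hbar^{-\nu_0})$.} Write $\Pi_\hbar:=\Pi(1,\hbar\eps_\hbar^{-\nu_0})$ and $\tilde\psi_\hbar:=\Pi_\hbar\psi_\hbar$. Since $\eps_\hbar^{-\nu_0}\to+\infty$, for any fixed $R>0$ one has $R\hbar\leq \hbar\eps_\hbar^{-\nu_0}$ for $\hbar$ small, so $\Pi_\hbar\Pi(1,R\hbar)=\Pi(1,R\hbar)$, hence
$$\|(I-\Pi_\hbar)\psi_\hbar\|=\|(I-\Pi_\hbar)(\psi_\hbar-\Pi(1,R\hbar)\psi_\hbar)\|\leq \|\psi_\hbar-\Pi(1,R\hbar)\psi_\hbar\|.$$
Taking $\limsup_{\hbar\to 0^+}$ and then letting $R\to+\infty$, assumption~\eqref{e:hosc} yields $\|\tilde\psi_\hbar-\psi_\hbar\|_{L^2}\to 0$.

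\textbf{The echo inequality.} Because $e^{-i\tau_\hbar\hat P_0(\hbar)/\hbar}$ and $e^{-i\tau_\hbar(\hat P_0(\hbar)+\eps_\hbar V)/\hbar}$ are unitary, substituting $\tilde\psi_\hbar$ for $\psi_\hbar$ in both slots of the inner product defining $\mf F^V_{\hbar,\eps_\hbar}(\psi_\hbar,\tau_\hbar)$ only costs $o(1)$. Using that $\Pi_\hbar$ commutes with $\hat P_0(\hbar)=-\hbar^2\Delta_g/2$ and that $\Pi_\hbar\tilde\psi_\hbar=\tilde\psi_\hbar$, one can rewrite the inner product as
$$\left\la\tilde\psi_\hbar,\, e^{i\tau_\hbar\hat P_0(\hbar)/\hbar}\,\Pi_\hbar\, e^{-i\tau_\hbar(\hat P_0(\hbar)+\eps_\hbar V)/\hbar}\,\Pi_\hbar\,\tilde\psi_\hbar\right\ra.$$
Cauchy--Schwarz, together with the unitarity of $e^{i\tau_\hbar\hat P_0(\hbar)/\hbar}$ and the bound $\|\tilde\psi_\hbar\|\leq 1$, gives
$$\mf F^V_{\hbar,\eps_\hbar}(\psi_\hbar,\tau_\hbar)\leq \left\|\Pi_\hbar\, e^{-i\tau_\hbar(\hat P_0(\hbar)+\eps_\hbar V)/\hbar}\,\Pi_\hbar\right\|_{L^2\to L^2}^2+o(1).$$

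\textbf{Conclusion and main obstacle.} The parameters having been arranged so that Lemma~\ref{l:loschmidt} applies with this value of $J$ and $\nu_0$, that lemma furnishes some $c_0'\in(0,1)$ bounding the $\limsup$ of the operator norm above, whence $\limsup_{\hbar\to 0^+}\mf F^V_{\hbar,\eps_\hbar}(\psi_\hbar,\tau_\hbar)\leq (c_0')^2<1$. The substantive work has already been absorbed into Lemma~\ref{l:loschmidt} (and, behind it, Propositions~\ref{p:semicl} and~\ref{p:dynamics}); the only delicate points in the present proof are the compactness reduction $\ml{C}_V=\emptyset\Rightarrow \ml{C}_V^J=\emptyset$ for some finite $J$ and the verification that $\nu_0$ can be chosen small enough for any prescribed $c_1>1$, both of which are routine.
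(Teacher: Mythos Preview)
Your proof is correct and follows exactly the route the paper indicates: the paper states the proposition as ``a direct application of lemma~\ref{l:loschmidt}'' and records in remark~\ref{r:energy-localization} precisely the two points you spell out, namely the compactness reduction $\ml{C}_V=\emptyset\Rightarrow\ml{C}_V^J=\emptyset$ for some finite $J$, and the fact that~\eqref{e:hosc} forces $\|(\text{Id}-\Pi_\hbar)\psi_\hbar\|\to 0$ for every $\nu_0>0$. Your argument simply makes explicit the Cauchy--Schwarz step the paper leaves to the reader, and your bound $(c_0')^2$ is even slightly sharper than the $c_0'$ the paper implicitly uses.
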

 
\begin{rema}\label{r:energy-localization} In order to deduce this proposition from lemma~\ref{l:loschmidt}, one should observe that  every sequence $(\psi_{\hbar})_{0<\hbar\leq 1}$ satisfying~\eqref{e:hosc} verify, for every $\nu_0>0$,
$$\lim_{\hbar\rightarrow 0}\left\|\left(\text{Id}-\mathbf{1}_{[1-\eps_{\hbar}^{-\nu_0}\hbar,1+\eps_{\hbar}^{-\nu_0}\hbar]}(-\hbar^2\Delta_g)\right)\psi_{\hbar}\right\|_{L^2}=0.$$
Moreover, by a compactness argument, $\ml{C}_V=\emptyset$ implies that there exists $J\geq 0$ such that $\ml{C}_V^J=\emptyset$.
\end{rema}

Our result shows that the quantum Loschmidt echo is strictly less than $1$ for times of order $|\log\eps_{\hbar}|$. Recall that, under our assumptions on $\eps_{\hbar}$, we 
are looking at a scale times for which the standard semiclassical rules apply. In other words, we are below the so-called Ehrenfest time which is $\frac{|\log\hbar|}{2}$ in 
this geometric context. We also observe 
that our result holds for \emph{any} sequence of initial data satisfying proper energy localization, namely~\eqref{e:hosc}. Compared with the results from the physics literature 
mentioned in the introduction of this section, we emphasize that our proposition does not provide a priori a decay rate for the quantum Loschmidt echo. 
 
\subsubsection{Beyond the $|\log\eps_{\hbar}|$-scale}

The previous proposition holds for times of order $|\log\eps_{\hbar}|$, and it is natural to ask what are the properties of the quantum Loschmidt echo beyond this time 
scale. According to the physics literature, it should typically be a nonincreasing function of time. Thus, one expects that it will at least remain strictly smaller than $1$ 
for larger times. The following proposition provides some informations in this direction.

\begin{prop}\label{p:loschmidt2} Suppose that $\dim(M)=2$, and that $M$ has constant negative sectional curvature $K\equiv -1$. Suppose also that 
$$\ml{C}_V=\bigcap_{j=0}^{+\infty}\left\{(x_0,\xi_0)\in S^*M:(X_0^j.f_V)(x_0,\xi_0)=0\right\}=\emptyset.$$
Let $(\eps_{\hbar})_{0<\hbar\leq 1}$ be a sequence such that $\lim_{\hbar\rightarrow 0}\eps_{\hbar}=0$, and such that there exists $0<\nu<\frac{1}{2}$ verifying, for $\hbar>0$ small enough,
$$\eps_{\hbar}\geq \hbar^{\nu}.$$

Then, there exists $0\leq c_0<1$ such that, for every sequence $(\psi_{\hbar})_{0<\hbar\leq 1}$ satisfying~\eqref{e:hosc}, and for every sequence $(\tau_{\hbar})_{0<\hbar\leq 1}$ satisfying
$$\lim_{\hbar\rightarrow 0^+}\frac{\tau_{\hbar}}{|\log\eps_{\hbar}|}=+\infty,$$
one has
$$0\leq \limsup_{\hbar\rightarrow 0}\int_0^1\mathbf{F}_{\hbar,\eps_{\hbar}}^V(\psi_{\hbar},t\tau_{\hbar})dt\leq c_0<1.$$

\end{prop}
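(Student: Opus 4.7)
I would argue by contradiction, exploiting the observation that if the time-averaged Loschmidt echo is close to $1$, then $\psi_\eps(\tau):=e^{-i\tau\hat P_\eps/\hbar}\psi_\hbar$ must be nearly parallel to $\psi_0(\tau):=e^{-i\tau\hat P_0/\hbar}\psi_\hbar$ at most times $\tau$; applying Lemma~\ref{l:loschmidt} to the admissible evolved state $\psi_0(\tau)$ over a short logarithmic window then forces the echo at $\tau+s_\hbar$ to be small, yielding a contradiction.

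First, suppose on some subsequence $\int_0^1\mathbf F^V_{\hbar,\eps_\hbar}(\psi_\hbar,t\tau_\hbar)\,dt\to 1$. By Markov's inequality one may choose $\delta_\hbar\to 0^+$ such that the set $A_\hbar:=\{t\in[0,1]:\mathbf F(t\tau_\hbar)\geq 1-\delta_\hbar\}$ has Lebesgue measure tending to $1$. Fix $s_\hbar=c|\log\eps_\hbar|$ for some $c\in(c_1,c_2)$ from Lemma~\ref{l:loschmidt}, and set $\sigma_\hbar:=s_\hbar/\tau_\hbar\to 0$ (by hypothesis on $\tau_\hbar$). Since $|A_\hbar|+|A_\hbar-\sigma_\hbar|\to 2$, their intersection is nonempty for $\hbar$ small, and I pick $t_\hbar$ in this intersection: then both $\mathbf F(t_\hbar\tau_\hbar)\geq 1-\delta_\hbar$ and $\mathbf F(t_\hbar\tau_\hbar+s_\hbar)\geq 1-\delta_\hbar$ hold.

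Second, I establish the shift identity
$$\langle\psi_0(\tau_0+s),\psi_\eps(\tau_0+s)\rangle=\langle\psi_0(\tau_0),e^{is\hat P_0/\hbar}e^{-is\hat P_\eps/\hbar}\psi_\eps(\tau_0)\rangle,$$
and decompose $\psi_\eps(\tau_0)=\rho\,\psi_0(\tau_0)+r$ with $\langle\psi_0(\tau_0),r\rangle=0$ and $|\rho|^2+\|r\|^2=1$. Then for any $\gamma>0$,
$$\mathbf F(\tau_0+s)\leq (1+\gamma)\,|\rho|^2\,\mathbf F_{\psi_0(\tau_0)}(s)+(1+\gamma^{-1})\,\|r\|^2.$$
When $\mathbf F(\tau_0)=|\rho|^2\geq 1-\delta_\hbar$, one has $\|r\|^2\leq\delta_\hbar$, and choosing $\gamma=\sqrt{\delta_\hbar}$ yields $\mathbf F(\tau_0+s)\leq \mathbf F_{\psi_0(\tau_0)}(s)+O(\sqrt{\delta_\hbar})$.

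Third, the shifted state $\phi_\hbar:=\psi_0(t_\hbar\tau_\hbar)=e^{-it_\hbar\tau_\hbar\hat P_0/\hbar}\psi_\hbar$ still satisfies~\eqref{e:hosc}, since the spectral projector $\Pi(1,\hbar\eps_\hbar^{-\nu_0})$ commutes with $\hat P_0$ (see Remark~\ref{r:energy-localization}). Using the same argument as at the beginning of the proof of Proposition~\ref{p:loschmidt1}, namely $\mathbf F_\phi(s)\leq\|\Pi(1,\hbar\eps_\hbar^{-\nu_0})e^{-is\hat P_\eps/\hbar}\Pi(1,\hbar\eps_\hbar^{-\nu_0})\|^2+o(1)$ with the $o(1)$ depending only on $\|(I-\Pi)\phi\|$, Lemma~\ref{l:loschmidt} applied with $\tau=s_\hbar$ yields $\mathbf F_{\phi_\hbar}(s_\hbar)\leq c_0^2+o(1)$, uniformly in the choice of admissible $\phi_\hbar$. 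Combining with step two gives $\mathbf F(t_\hbar\tau_\hbar+s_\hbar)\leq c_0^2+O(\sqrt{\delta_\hbar})+o(1)$, which together with $\mathbf F(t_\hbar\tau_\hbar+s_\hbar)\geq 1-\delta_\hbar$ contradicts $c_0<1$ after letting $\hbar\to 0^+$ and $\delta_\hbar\to 0$.

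The main obstacle is the uniformity in the choice of shifted admissible state $\phi_\hbar$: the bound in Lemma~\ref{l:loschmidt} is an operator-norm bound, independent of any particular state, which is exactly what makes the above shift argument work; had the lemma only produced a state-dependent bound, one would need to recover uniformity through a further contradiction argument along the lines already used in its proof. A secondary care point is to ensure that $s_\hbar=c|\log\eps_\hbar|$ indeed lies in the admissible range for Lemma~\ref{l:loschmidt} and that the implicit constant in the $O(\sqrt{\delta_\hbar})$ term in step two does not depend on $\tau_0$, both of which follow directly from the construction.
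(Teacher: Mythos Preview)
Your argument is correct, but it follows a different path from the paper's own proof. Both are by contradiction and both ultimately invoke the operator-norm bound of Lemma~\ref{l:loschmidt}, yet the reductions differ.

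The paper first bounds $\mathbf F_{\hbar,\eps_\hbar}^V(\psi_\hbar,t\tau_\hbar)$ by $\|\Pi(1,\hbar\eps_\hbar^{-\nu_0})e^{-it\tau_\hbar\hat P_\eps/\hbar}\psi_\hbar\|^2$ and shows the stronger fact that this last integral cannot tend to $1$. Assuming it does, the complementary quantity $\int_0^1\|(\mathrm{Id}-\Pi)e^{-it\tau_\hbar\hat P_\eps/\hbar}\psi_\hbar\|^2\,dt$ tends to $0$; a change of variables $t\mapsto t+c|\log\eps_\hbar|/\tau_\hbar$ (the same shift you use) then lets one factor out $e^{-ic|\log\eps_\hbar|\hat P_\eps/\hbar}$ and insert a second $\Pi$ at the cost of this vanishing $L^2$-error, so the integrand is controlled by $\|\Pi e^{-ic|\log\eps_\hbar|\hat P_\eps/\hbar}\Pi\|^2\le c_0^2$, giving the contradiction. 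Everything stays at the level of integrals.

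Your route instead extracts, via Markov's inequality and a translation argument on $[0,1]$, a single time $t_\hbar$ at which both $\mathbf F(t_\hbar\tau_\hbar)$ and $\mathbf F(t_\hbar\tau_\hbar+s_\hbar)$ are close to $1$, then compares the echo at $t_\hbar\tau_\hbar+s_\hbar$ with the echo of the shifted initial datum $\phi_\hbar=e^{-it_\hbar\tau_\hbar\hat P_0/\hbar}\psi_\hbar$ using the orthogonal decomposition $\psi_\eps(\tau_0)=\rho\,\psi_0(\tau_0)+r$. The key uniformity issue you flag is correctly resolved: since $\|(\mathrm{Id}-\Pi)\phi_\hbar\|=\|(\mathrm{Id}-\Pi)\psi_\hbar\|$, the $o(1)$ in the passage from $\mathbf F_{\phi_\hbar}(s_\hbar)$ to $\|\Pi e^{-is_\hbar\hat P_\eps/\hbar}\Pi\|^2$ is independent of $t_\hbar$. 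Your approach is somewhat more hands-on and yields only the statement as written, whereas the paper's integral argument gives the marginally stronger bound on $\int_0^1\|\Pi e^{-it\tau_\hbar\hat P_\eps/\hbar}\psi_\hbar\|^2\,dt$ along the way; on the other hand, your decomposition step makes the mechanism---that near-overlap at time $\tau_0$ propagates the echo of the \emph{free}-evolved state---quite transparent.
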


Compared with proposition~\ref{p:loschmidt1} which was for instance valid \emph{for any time} of order $c|\log\eps_{\hbar}|$ (with $c>0$ fixed in a convenient interval), this 
result holds \emph{on average over an interval of times} of order $\tau_{\hbar}$. The advantage is that we can consider much larger times, e.g. we can choose 
$\tau_{\hbar}\gg|\log\hbar|$. A natural scale of times is $\tau_{\hbar}=1/\hbar$, which is called the Heisenberg time in the physics literature. In this case, the result reads as follows:
\begin{coro} Suppose that $\dim(M)=2$, that $M$ has constant negative sectional curvature $K\equiv -1$, and that 
$\ml{C}_V=\emptyset.$ Let $3/2<\alpha<2$. 

Then, there exists $0\leq c_0<1$ such that for every sequence $(\psi_{\hbar})_{0<\hbar\leq 1}$ satisfying~\eqref{e:hosc}, one has
$$\forall\tau_0>0,\ \ \limsup_{\hbar\rightarrow 0}\frac{1}{\tau_0}\int_0^{\tau_0}\left|\left\la e^{\frac{it\Delta_g}{2}}\psi_{\hbar},e^{\frac{it(\Delta_g+\hbar^{-\alpha}V)}{2}}\psi_{\hbar}\right\ra\right|^2dt\leq c_0<1.$$
\end{coro}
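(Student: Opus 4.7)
The plan is to deduce this corollary directly from Proposition~\ref{p:loschmidt2} by unpacking the semiclassical normalization: the time $t$ in the corollary runs over a fixed interval $[0,\tau_0]$ independent of $\hbar$, but the propagators are generated by $\Delta_g$ (not $\hbar\Delta_g$), so after rescaling $\tau \mapsto t/\hbar$ we land exactly on the Heisenberg time scale considered in Proposition~\ref{p:loschmidt2}. More precisely, I would set
$$\tau_{\hbar}:=\frac{\tau_0}{\hbar},\qquad \eps_{\hbar}:=\frac{\hbar^{2-\alpha}}{2},\qquad \widetilde V:=-V,$$
and observe that, since $\hat P_0(\hbar)/\hbar=-\hbar\Delta_g/2$, a direct computation gives
$$e^{-i(t\tau_0/\hbar)\hat P_0(\hbar)/\hbar}\psi_\hbar=e^{it\tau_0\Delta_g/2}\psi_\hbar,\qquad e^{-i(t\tau_0/\hbar)(\hat P_0(\hbar)+\eps_\hbar\widetilde V)/\hbar}\psi_\hbar=e^{it\tau_0(\Delta_g+\hbar^{-\alpha}V)/2}\psi_\hbar,$$
so that $\mathbf F_{\hbar,\eps_\hbar}^{\widetilde V}(\psi_\hbar,t\tau_\hbar)=|\langle e^{it\tau_0\Delta_g/2}\psi_\hbar,e^{it\tau_0(\Delta_g+\hbar^{-\alpha}V)/2}\psi_\hbar\rangle|^2$.

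Next I would verify the hypotheses of Proposition~\ref{p:loschmidt2} for this choice. The assumption $\ml C_V=\emptyset$ is invariant under $V\mapsto -V$, so $\ml C_{\widetilde V}=\emptyset$. Clearly $\eps_\hbar\to 0$ since $\alpha<2$. Because $\alpha>3/2$ we have $2-\alpha\in(0,1/2)$, so one can pick $\nu\in(2-\alpha,1/2)$, and then $\hbar^\nu\leq\eps_\hbar$ for $\hbar$ small enough. Finally, since $|\log\eps_\hbar|=(2-\alpha)|\log\hbar|+O(1)$,
$$\frac{\tau_\hbar}{|\log\eps_\hbar|}=\frac{\tau_0}{\hbar(2-\alpha)|\log\hbar|}(1+o(1))\longrightarrow+\infty,$$
as required. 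The sequence $(\psi_\hbar)$ satisfies~\eqref{e:hosc} by assumption.

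Then I apply Proposition~\ref{p:loschmidt2} and obtain a constant $c_0\in[0,1)$, depending only on the data $(M,g,V,\alpha)$, such that
$$\limsup_{\hbar\to 0}\int_0^1\mathbf F_{\hbar,\eps_\hbar}^{\widetilde V}(\psi_\hbar,t\tau_\hbar)\,dt\leq c_0.$$
Finally I perform the change of variables $s=t\tau_0$ inside the integral to rewrite this upper bound as
$$\limsup_{\hbar\to 0}\frac{1}{\tau_0}\int_0^{\tau_0}\left|\left\langle e^{is\Delta_g/2}\psi_\hbar,e^{is(\Delta_g+\hbar^{-\alpha}V)/2}\psi_\hbar\right\rangle\right|^2 ds\leq c_0,$$
which is exactly the conclusion.

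There is no real mathematical obstacle here once the setup is in place: the whole proof is careful bookkeeping of the semiclassical normalization. The one subtlety worth double-checking is the opening of the square bracket $\alpha\in(3/2,2)$: the upper bound $\alpha<2$ guarantees $\eps_\hbar\to 0$, while the lower bound $\alpha>3/2$ is exactly what is needed to place $2-\alpha$ below $1/2$, i.e., to satisfy the hypothesis $\eps_\hbar\geq\hbar^\nu$ with some $\nu<1/2$ from Propositions~\ref{p:semicl} and~\ref{p:loschmidt2}. This is the mechanism that converts Proposition~\ref{p:loschmidt2}, valid only for perturbations much stronger than the mean level spacing, into a statement about the Heisenberg-time Loschmidt echo for coupling $\hbar^{-\alpha}$.
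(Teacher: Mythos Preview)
Your proof is correct and follows exactly the route the paper indicates: the corollary is obtained from Proposition~\ref{p:loschmidt2} by taking the Heisenberg time scale $\tau_\hbar=\tau_0/\hbar$ and $\eps_\hbar=\hbar^{2-\alpha}/2$, and your verification of the hypotheses (in particular that $2-\alpha\in(0,1/2)$ forces both $\eps_\hbar\to 0$ and $\eps_\hbar\ge\hbar^\nu$ for some $\nu<1/2$) is accurate. The sign bookkeeping via $\widetilde V=-V$, together with the observation that $\ml{C}_{-V}=\ml{C}_V$, is a detail the paper leaves implicit; your handling of it is correct.
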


We will now give the proof of proposition~\ref{p:loschmidt2} which also follows from lemma~\ref{l:loschmidt}.

\begin{proof} As $\ml{C}_V=\emptyset$ and as $S^*M$ is compact, there exists $J\geq 0$ such that $\ml{C}_V^J=\emptyset$. We fix $\nu_0>0$ small enough to ensure $1+(3J+1)\nu_0<\min\{3/2,1/(2\nu)\}$ as in the statement of lemma~\ref{l:loschmidt}.

Using remark~\ref{r:energy-localization}, we first observe that
$$\limsup_{\hbar\rightarrow 0}\int_0^1\mathbf{F}_{\hbar,\eps_{\hbar}}^V(\psi_{\hbar},t\tau_{\hbar})dt
\leq\limsup_{\hbar\rightarrow 0}\int_0^1\left\|\Pi(1,\hbar\eps_{\hbar}^{-\nu_0})e^{-\frac{it\tau_{\hbar}}{\hbar}(\hat{P}_0(\hbar)+\eps_{\hbar}V)}\psi_{\hbar}\right\|^2dt.$$
We will in fact show something slightly stronger. Precisely, we will prove that there exists $0\leq c_0<1$ such that for every sequence $(\psi_{\hbar})_{0<\hbar\leq 1}$ satisfying~\eqref{e:hosc}, and for every sequence $(\tau_{\hbar})_{0<\hbar\leq 1}$ satisfying
$$\lim_{\hbar\rightarrow 0^+}\frac{\tau_{\hbar}}{|\log\eps_{\hbar}|}=+\infty,$$
one has
$$0\leq \limsup_{\hbar\rightarrow 0}\int_0^1\left\|\Pi(1,\hbar\eps_{\hbar}^{-\nu_0})e^{-\frac{it\tau_{\hbar}}{\hbar}(\hat{P}_0(\hbar)+\eps_{\hbar}V)}\psi_{\hbar}\right\|^2dt\leq c_0<1.$$
We proceed by contradiction. We suppose that there exist a sequence $(\psi_{\hbar_n})_{n\geq 1}$ satisfying~\eqref{e:hosc}, and a sequence $(\tau_{\hbar_n})_{n\geq 1}$ satisfying
$$\lim_{n\rightarrow +\infty}\frac{\tau_{\hbar_n}}{|\log\eps_{\hbar_n}|}=+\infty,$$
such that
\begin{equation}\label{e:step0-long-time}\lim_{n\rightarrow +\infty}\int_0^1\left\|\Pi(1,\hbar_n\eps_{\hbar_n}^{-\nu_0})e^{-\frac{it\tau_{\hbar_n}}{\hbar_n}(\hat{P}_0(\hbar_n)+\eps_{\hbar_n}V)}\psi_{\hbar_n}\right\|^2dt=1.
\end{equation}
In particular, as $\|\psi_{\hbar_n}\|=1$, one has
\begin{equation}\label{e:step1-long-time}
\lim_{n\rightarrow +\infty}\int_0^1\left\|\left(\text{Id}-\Pi(1,\hbar_n\eps_{\hbar_n}^{-\nu_0})\right)e^{-\frac{it\tau_{\hbar_n}}{\hbar_n}(\hat{P}_0(\hbar_n)+\eps_{\hbar_n}V)}\psi_{\hbar_n}\right\|^2dt=0,
\end{equation}
and also, from the Jensen's inequality,
\begin{equation}\label{e:step2-long-time}
\lim_{n\rightarrow +\infty}\int_0^1\left\|\left(\text{Id}-\Pi(1,\hbar_n\eps_{\hbar_n}^{-\nu_0})\right)e^{-\frac{it\tau_{\hbar_n}}{\hbar_n}(\hat{P}_0(\hbar_n)+\eps_{\hbar_n}V)}\psi_{\hbar_n}\right\|dt=0,
\end{equation}
We fix  $1+(3J+1)\nu_0<c<\min\{3/2,1/(2\nu)\}$. By changing the variables in equation~\eqref{e:step0-long-time}, we deduce that
$$\lim_{n\rightarrow +\infty}\int_0^1\left\|\Pi(1,\hbar_n\eps_{\hbar_n}^{-\nu_0})e^{-\frac{ic|\log\eps_{\hbar_n}|}{\hbar_n}(\hat{P}_0(\hbar_n)+\eps_{\hbar_n}V)}e^{-\frac{it\tau_{\hbar_n}}{\hbar_n}(\hat{P}_0(\hbar_n)+\eps_{\hbar_n}V)}\psi_{\hbar_n}\right\|^2dt=1.$$
Then, from~\eqref{e:step0-long-time}, ~\eqref{e:step1-long-time} and~\eqref{e:step2-long-time}, one gets
$$1=\lim_{n\rightarrow +\infty}\int_0^1\left\|\Pi(1,\hbar_n\eps_{\hbar_n}^{-\nu_0})e^{-\frac{ic|\log\eps_{\hbar_n}|}{\hbar_n}(\hat{P}_0(\hbar_n)+\eps_{\hbar_n}V)}\Pi(1,\hbar_n\eps_{\hbar_n}^{-\nu_0})e^{-\frac{it\tau_{\hbar_n}}{\hbar_n}(\hat{P}_0(\hbar_n)+\eps_{\hbar_n}V)}\psi_{\hbar_n}\right\|^2dt.$$
In particular, one has
$$1\leq
\limsup_{n\rightarrow +\infty}\left\|\Pi(1,\hbar_n\eps_{\hbar_n}^{-\nu_0})e^{-\frac{ic|\log\eps_{\hbar_n}|}{\hbar_n}(\hat{P}_0(\hbar_n)+\eps_{\hbar_n}V)}\Pi(1,\hbar_n\eps_{\hbar_n}^{-\nu_0})\right\|^2,$$
which is $\leq c_0^2<1$ from lemma~\ref{l:loschmidt}, and thus provides the contradiction.
\end{proof}

\appendix

\section{Semiclassical analysis on manifolds}

\label{a:pdo}
In this appendix, we review some basic facts on semiclassical analysis that can be found for instance in~\cite{Zw12} -- chapter~$14$.

\subsection{General facts}

Recall that we define on $\mathbb{R}^{2d}$ the following class of admissible symbols:
$$S^{m,k}(\mathbb{R}^{2d}):=\left\{(a_{\hbar}(x,\xi))_{\hbar\in(0,1]}\in \ml{C}^{\infty}(\mathbb{R}^{2d}):|\partial^{\alpha}_x\partial^{\beta}_{\xi}a_{\hbar}|
\leq C_{\alpha,\beta}\hbar^{-k}\langle\xi\rangle^{m-|\beta|}\right\}.$$
Let $M$ be a smooth Riemannian $d$-manifold without boundary. Consider a smooth atlas $(f_l,V_l)$ of $M$, where each $f_l$ is a smooth diffeomorphism from 
$V_l\subset M$ to a bounded open set $W_l\subset\mathbb{R}^{d}$. To each $f_l$ correspond a pull back $f_l^*:\ml{C}^{\infty}(W_l)\rightarrow \ml{C}^{\infty}(V_l)$ and a canonical 
map $\tilde{f}_l$ from $T^*V_l$ to $T^*W_l$:
$$\tilde{f}_l:(x,\xi)\mapsto\left(f_l(x),(Df_l(x)^{-1})^T\xi\right).$$
Consider now a smooth locally finite partition of identity $(\phi_l)$ adapted to the previous atlas $(f_l,V_l)$. 
That means $\sum_l\phi_l=1$ and $\phi_l\in \ml{C}^{\infty}(V_l)$. Then, any observable $a$ in $\ml{C}^{\infty}(T^*M)$ can be decomposed as follows: $a=\sum_l a_l$, where 
$a_l=a\phi_l$. Each $a_l$ belongs to $\ml{C}^{\infty}(T^*V_l)$ and can be pushed to a function $\tilde{a}_l=(\tilde{f}_l^{-1})^*a_l\in \ml{C}^{\infty}(T^*W_l)$. 
As in~\cite{Zw12}, define the class of symbols of order $m$ and index $k$
\begin{equation}
\label{defpdo}S^{m,k}(T^{*}M):=\left\{(a_{\hbar}(x,\xi))_{\hbar\in(0,1]}\in \ml{C}^{\infty}(T^*M):|\partial^{\alpha}_x\partial^{\beta}_{\xi}a_{\hbar}|\leq C_{\alpha,\beta}\hbar^{-k}\langle\xi\rangle^{m-|\beta|}\right\}.
\end{equation}
Then, for $a\in S^{m,k}(T^{*}M)$ and for each $l$, one can associate to the symbol $\tilde{a}_l\in S^{m,k}(\mathbb{R}^{2d})$ the standard Weyl quantization
$$\Op_{\hbar}^{w}(\tilde{a}_l)u(x):=
\frac{1}{(2\pi\hbar)^d}\int_{\IR^{2d}}e^{\frac{\imath}{\hbar}\langle x-y,\xi\rangle}\tilde{a}_l\left(\frac{x+y}{2},\xi;\hbar\right)u(y)dyd\xi,$$
where $u\in\mathcal{S}(\mathbb{R}^d)$, the Schwartz class. Consider now a smooth cutoff $\psi_l\in \ml{C}_c^{\infty}(V_l)$ such that $\psi_l=1$ close to the support of $\phi_l$. 
A quantization of $a\in S^{m,k}(T^*M)$ is then defined in the following way~\cite{Zw12}:
\begin{equation}
\label{pdomanifold}\Op_{\hbar}(a)(u):=\sum_l \psi_l\times\left(f_l^*\Op_{\hbar}^w(\tilde{a}_l)(f_l^{-1})^*\right)\left(\psi_l\times u\right),
\end{equation}
where $u\in \ml{C}^{\infty}(M)$. This quantization procedure $\Op_{\hbar}$ sends (modulo $\mathcal{O}(\hbar^{\infty})$) $S^{m,k}(T^{*}M)$ onto the space of pseudodifferential 
operators of order $m$ and of index $k$, denoted $\Psi^{m,k}(M)$~\cite{Zw12}. It can be shown that the dependence in the cutoffs $\phi_l$ and $\psi_l$ only appears at order 
$1$ in $\hbar$ (Theorem $18.1.17$ in~\cite{Ho85} or Theorem $9.10$ in~\cite{Zw12}) and the principal symbol map $\sigma_0:\Psi^{m,k}(M)\rightarrow S^{m,k}/S^{m-1,k-1}(T^{*}M)$ is 
intrinsically defined. Most of the rules (for example the composition of operators, the Egorov and Calder\'on-Vaillancourt Theorems) that hold on 
$\mathbb{R}^{2d}$ still hold in the case of $\Psi^{m,k}(M)$. Because our study concerns the behavior of quantum evolution for logarithmic times in $\hbar$, a larger class of 
symbols should be introduced as in~\cite{Zw12}, for $0\leq\overline{\nu}<1/2$,
\begin{equation}\label{symbol}
S^{m,k}_{\overline{\nu}}(T^{*}M):=\left\{(a_{\hbar})_{\hbar\in(0,1]}\in C^{\infty}(T^*M):
|\partial^{\alpha}_x\partial^{\beta}_{\xi}a_{\hbar}|\leq C_{\alpha,\beta}\hbar^{-k-\overline{\nu}|\alpha+\beta|}\langle\xi\rangle^{m-|\beta|}\right\}.
\end{equation}
Results of~\cite{Zw12} (as Calder\'on-Vaillancourt and Egorov theorems) can also be applied to this new class of symbols. 

\subsection{Egorov theorem}\label{ss:egorov} 

In this paragraph, we briefly recall the Egorov theorem for short logarithmic 
times on constant negatively curved surfaces. The proof of this result on compact manifold was given in~\cite{AnNo07, DyGu14} 
building on earlier proofs on~$\IR^d$~\cite{BaGrPa99, BoRo02, Zw12}. Here, we will in fact be interested in generalizations of the 
results from~\cite{AnNo07, DyGu14} in the context of ``perturbed'' Schr\"odinger operators $\hat{P}_{\eps}(\hbar):=\hat{P}_0(\hbar)+\eps V$. It was 
explained in appendix $B$ of~\cite{EsRi14} how the Egorov Theorem for short logarithmic 
times can be extended to $\hat{P}_{\eps}(\hbar)$ by slightly adapting the arguments 
from~\cite{AnNo07, DyGu14}. We will just recall the result we need, and we refer the reader to the above references for more details.

Let $\delta>0$ and let $a$ be a smooth function which is compactly supported in the following neighborhood of $S^*M$ of size $\delta$, i.e.:
$$T_{[1/2-\delta,1/2+\delta]}^*M:=\left\{(x,\xi)\in T^*M: p_0(x,\xi)\in[1/2-\delta, 1/2+\delta] \right\},$$
where $p_0(x,\xi):=\frac{\|\xi\|^2}{2}$. According to~\cite{EsRi14} (appendix $B$), there exists $\eps_0>0$ such that, for every $\eps\in[0,\eps_0]$ and for 
every smooth function $a$ compactly supported in $T_{[1/2-\delta,1/2+\delta]}^*M$, one has that, for every 
$0\leq t\leq \frac{(1-\delta)|\log\hbar|}{2\sqrt{1+6\delta}}$, the operator
$$e^{\frac{it\hat{P}_{\eps}(\hbar)}{\hbar}}\Oph(a)e^{-\frac{it\hat{P}_{\eps}(\hbar)}{\hbar}}$$
belongs to $\Psi^{-\infty,0}_{\overline{\nu}}(M)$ for some $0<\overline{\nu}<1/2$ (which depends on $\delta$ and $\eps_0$). Moreover, its principal symbol is equal to $a\circ G_{\eps}^t$ and all 
the involved semi-norms can be uniformly bounded in terms of $\eps\in[0,\eps_0]$ and of $t$ in the above range. Also, thanks to the Calder\'on-Vaillancourt Theorem, 
we find that, uniformly for $\eps\in[0,\eps_0]$ and $0\leq t\leq \frac{(1-\delta)|\log\hbar|}{2\sqrt{1+6\delta}}$, one has
\begin{equation}\label{e:large-time-egorov}
\left\|e^{\frac{it\hat{P}_{\eps}(\hbar)}{\hbar}}\Oph(a)e^{-\frac{it\hat{P}_{\eps}(\hbar)}{\hbar}}-\Oph(a\circ G_{\eps}^t)\right\|_{L^2\rightarrow L^2}=o(1),
\end{equation}
as $\hbar\rightarrow 0$.

\subsection{Positive quantization}\label{ss:antiwick}

Even if the Weyl procedure is a natural choice to quantize an observable $a$ on $\mathbb{R}^{2d}$, it is sometimes preferrable to use a quantization 
procedure $\Op_{\hbar}^+$ that satisfies the following property~: $\Op_{\hbar}^+(a)\geq 0$ if $a\geq0$. This can be achieved thanks to the anti-Wick procedure $\Op_{\hbar}^{AW}$, 
see~\cite{HeMaRo87} for instance. For $a$ in $S^{0,0}_{\overline{\nu}}(\mathbb{R}^{2d})$, that coincides with a function on $\mathbb{R}^d$ outside a compact subset of 
$T^*\mathbb{R}^d=\mathbb{R}^{2d}$, one has
\begin{equation}\|\Op_{\hbar}^w(a)-\Op_{\hbar}^{AW}(a)\|_{L^2}\leq C\sum_{1\leq|\alpha|\leq D}\hbar^{\frac{|\alpha|}{2}}
\|\partial^{\alpha}a\|,
\end{equation}
where $C$ and $D$ are some positive constants that depend only on the dimension $d$.
To get a positive procedure of quantization on a manifold, one can replace the Weyl quantization by the anti-Wick one in definition~(\ref{pdomanifold}). 
This new choice of quantization (that we will denote by $\Oph^+$) is positive and it is well defined for every element $a$ in $S^{0,0}_{\overline{\nu}}(T^*M)$ of the 
form $c_0(x)+c(x,\xi)$ where $c_0$ belongs to 
$S^{0,0}_{\overline{\nu}}(T^*M)$ and $c$ belongs to\footnote{Here we mean that there exists a compact subset $K\subset T^*M$ such that $\text{supp}(a_{\hbar})\subset K$ for every $0<\hbar\leq 1$.} $\mathcal{C}^{\infty}_c(T^*M)\cap S^{0,0}_{\overline{\nu}}(T^*M)$. We can also require that $\Oph^+(1)=\text{Id}_{L^2(M)}.$ The main observation is that, for such symbols, one 
has
\begin{equation}\label{e:positive-quantization}
\left\|\Oph^+(a)-\Oph(a)\right\|_{L^2}\leq C'\sum_{1\leq|\alpha|\leq D'}\hbar^{\frac{|\alpha|}{2}}\|\partial^{\alpha}a\|,
\end{equation}
where $C'$ and $D'$ are some positive constants that depend only on the manifold $M$ and on the choice of coordinate charts.

\section{Strong structural stability}
\label{a:stability}

In our proof, we needed to use the strong structural stability property for Anosov flows~\cite{Ano67}, and more precisely, we needed to use the fact that the conjugating homeomorphism and the reparametrization function depend in a ``smooth'' way on the perturbation parameter $\eps$. This regularity was observed by De La Llave, Marco and Moriyon~\cite{dLLMM86} based on ``analytic'' proofs of structural stability for Anosov diffeomorphisms due to Moser~\cite{Mo69} and Mather~\cite{Sm67}. In this appendix, we recall a few facts on the geometric properties of the conjugating homeomorphism that were proved in section $5$ of~\cite{EsRi14} based on the arguments of~\cite{dLLMM86}.

First, we briefly recall strong structural stability property for Anosov flows in the same way as it was stated in~\cite{dLLMM86}. For that purpose, we introduce some manifolds of mappings that will be involved in this theorem -- see~\cite{Ee58, Ab63, Ee66} or the appendix of~\cite{EsRi14} for a brief reminder on their differential structure. Define
$$\mathcal{C}_{X_0}(S^*M):=\left\{h\in\ml{C}^{0}(S^*M,S^*M):\ \forall \rho\in S^*M,
\ \left(\frac{d}{dt}h\circ G_0^t(\rho)\right)_{t=0}=D_{X_0}h\ \text{exists}\right\},$$
which can be endowed with a smooth differential structure modeled on the Banach spaces of continuous sections $s:S^*M\mapsto h^*TS^*M$ which are differentiable along the geodesic flow. This manifold contains an ``adapted'' submanifold $\ml{M}$ which contains $\text{Id}_{S^*M}$ in its interior and \emph{for which the elements are in some sense ``transversal'' to the geodesic vector field} $X_0$~\cite{dLLMM86} -- appendix $A$ (see also appendix of~\cite{EsRi14}). 

The structural stability theorem can be then stated as follows (theorem $A.2$ in~\cite{dLLMM86}):

\begin{theo}\label{t:stability}[Strong structural stability] Assume $X_0$ is an Anosov vector field. There exists an open neighborhood $\ml{U}_0(X_0)$ of $X_0$ in $\ml{V}^2(S^*M)$ and a 
unique $\ml{C}^2$ map $S_0:\ml{U}_0(X_0)\rightarrow \ml{M}\times\ml{C}^0(S^*M,\IR)$ such that $S_0(X_0)=(\text{Id}_{S^*M},1)$ and if $S_0(X)=(h,\tau)$, then
\begin{equation}
 \label{e:implicit-equation}
D_{X_0}h-\tau X\circ h=0_{S^*M}(h),
\end{equation}
where $0_{S^*M}$ is the zero section.
\end{theo}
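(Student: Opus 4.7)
The plan is to recast~\eqref{e:implicit-equation} as a nonlinear functional equation
$$F(h,\tau,X) := D_{X_0}h - \tau\, X\circ h = 0$$
on an open neighborhood of $(\text{Id}_{S^*M},1,X_0)$ inside $\ml{M}\times \ml{C}^0(S^*M,\IR)\times \ml{V}^2(S^*M)$, to verify that $F$ is $\ml{C}^2$ in all three arguments -- this is the content of the Moser-Mather machinery revisited in~\cite{dLLMM86}, where the composition $X\circ h$ is handled using suitable Banach charts on $\ml{M}$ -- and then to invoke the implicit function theorem on Banach manifolds. With $F$ taking values in a Banach space of continuous sections of the pullback bundle $h^*TS^*M$ that are differentiable along $X_0$, the only nontrivial point is to show that the partial linearization $D_{(h,\tau)}F$ at $(\text{Id},1,X_0)$ is a Banach isomorphism.

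A direct computation (using $X_0\circ\text{Id}=X_0$ and the fact that the exponential map is tangent to the identity) yields the linearized equation
$$\mathcal{L}_{X_0}v - \sigma\, X_0 = \xi,$$
for $v$ a continuous section of $TS^*M$ differentiable along the flow and $\sigma\in\ml{C}^0(S^*M,\IR)$, where $\xi$ is a prescribed continuous section. I would decompose both $v$ and $\xi$ along the Anosov splitting~\eqref{e:anosov} using the adapted frame $(X_0,X^s,X^u)$. Writing $v = v^0 X_0 + v^s X^s + v^u X^u$ and similarly for $\xi$, the stable and unstable components satisfy transport equations of the schematic form $(X_0 + 1)v^s = \xi^s$ and $(X_0 - 1)v^u = \xi^u$ (the constants reflecting the constant curvature $-1$), explicitly solved by
$$v^s(\rho) = -\int_0^{+\infty}e^{-t}\,\xi^s(G_0^t\rho)\,dt,\qquad v^u(\rho) = -\int_{-\infty}^{0}e^{t}\,\xi^u(G_0^t\rho)\,dt.$$
These integrals converge uniformly on $S^*M$ and produce bounded continuous (actually H\"older) solutions thanks to the Anosov bounds recalled in section~\ref{s:geom-back}. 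The $X_0$-component reduces to a transport equation $X_0 v^0 = \xi^0 + \sigma$, whose cohomological obstruction is exactly absorbed by choosing $\sigma$ appropriately; the transversality condition defining $\ml{M}$ then pins down $v^0$, and together these determine $\sigma$ uniquely. The resulting bounded inverse, combined with the open mapping theorem, gives the desired isomorphism.

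The main technical obstacle is the simultaneous treatment of the neutral direction: unlike for Anosov diffeomorphisms, one cannot simply kill the $\IR X_0$-component by requiring transversality, and it is precisely the introduction of the reparametrization parameter $\tau$ that removes the cohomological obstruction to solving the transport equation along $X_0$. The careful design of the submanifold $\ml{M}$ in~\cite{dLLMM86} is tailored to make this coupling between $v^0$ and $\sigma$ unambiguous. Once this point is settled, the implicit function theorem produces a unique $\ml{C}^2$ map $S_0$ on a neighborhood $\ml{U}_0(X_0)$ of $X_0$ with $F(S_0(X),X)=0$ and $S_0(X_0)=(\text{Id}_{S^*M},1)$; local uniqueness within $\ml{M}\times\ml{C}^0(S^*M,\IR)$ is the standard output, and the $\ml{C}^2$ regularity in $X$ is inherited from the joint $\ml{C}^2$ smoothness of $F$ in its arguments.
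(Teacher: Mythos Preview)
The paper does not give a proof of this theorem: it is stated as theorem~A.2 of~\cite{dLLMM86} and quoted as a black box. Your sketch follows the standard implicit-function-theorem strategy of Moser--Mather and~\cite{dLLMM86} that the paper is citing, and the integral formulas you write for $v^s,v^u$ are precisely the ones that reappear later in the appendix as $\beta_V^s$ and $\beta_V^u$ (up to the normalization $\sqrt{2}$). So your outline is consistent with the literature the paper invokes, and there is nothing to compare it against in the paper itself.
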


\begin{rema}\label{r:reparametrization} In fact, the neighborhood $\ml{U}_0(X_0)$ can be chosen small enough to ensure that $h$ is an homeomorphism -- appendix $A$ of~\cite{dLLMM86} or remark $5.5$ of~\cite{EsRi14}. Using that $h$ is an homeomorphism, we can also write the following formula connecting the two flows:
\begin{equation}\label{e:struc-stab-useful} 
\forall\ t\in\IR,\ h\circ G_0^{\tau(t,\rho)}\circ h^{-1}(\rho)=G_X^t(\rho),
\end{equation}
where $$\tau(t,\rho):=\int_{0}^t\frac{ds}{\tau\circ h^{-1}\circ G_X^s(\rho)}.$$
\end{rema}

We now describe some geometric properties of the conjugating homeomorphism when we apply the strong structural stability theorem to the perturbations $Y_{x_1,\xi_1}^{\eps}$. Recall that they define small $\ml{C}^1$ perturbation of the geodesic vector field $X_0$. We refer to section $5$ of~\cite{EsRi14} for the details of the proofs. 

Observe that there exists $\eps_0>0$ such that, for every $(x_1,\xi_1)$ in a small neighborhood of $S^*M$ and for every $\eps\in[0,\eps_0]$, $Y_{x_1,\xi_1}^{\eps}$ belongs to the neighborhood of the previous theorem. We write
$$S_0(Y_{x_1,\xi_1}^{\eps})=(h_{x_1,\xi_1}^{\eps},\tau_{x_1,\xi_1}^{\eps}).$$
As the map $S_0$ is of class $\ml{C}^1$, we can write that
\begin{equation}\label{e:smoothmap}
\sup_{\rho\in S^*M}\left\{d(h_{x_1,\xi_1}^{\eps}(\rho),\rho),|\tau_{x_1,\xi_1}^{\eps}(\rho)-1|\right\}=\ml{O}(\eps),
\end{equation}
where the constant in the remainder is uniform for $(x_1,\xi_1)$ in a small neighborhood of $S^*M$. In our proof, we also need to understand precisely the properties of the map $(h_{x_1,\xi_1}^{\eps})^{-1}$. We observe that the map $\eps\mapsto (h_{x_1,\xi_1}^{\eps})^{-1}\in\ml{C}^0(S^*M,S^*M)$ has a priori no reason to be of class $\ml{C}^1$ -- see remark $5.7$ in~\cite{EsRi14}. In order to solve this problem, we will write
$$ h_{x_1,\xi_1}^{\eps}:=\exp(v_{x_1,\xi_1}^{\eps}),$$
where $v_{x_1,\xi_1}^{\eps}$ is a continuous vector field and $\exp$ is the exponential map induced by the Riemannian structure on $S^*M$. We also introduce the following vector field on $S^*M$:
\begin{equation}\label{e:approx-inverse}\tilde{v}_{x_1,\xi_1}^{\eps}:=\frac{\eps}{\|\xi_1\|}\left(\beta^s_VX^s+\beta^u_VX^u\right),
\end{equation}
where
$$\beta^s_V(x,\xi):=\frac{1}{\sqrt{2}}\int_0^{+\infty}g_{x(-t)}^*\left(d_{x(-t)}V,\xi^{\perp}(-t)\right)e^{-t}dt,$$
and
$$\beta^u_V(x,\xi):=\frac{1}{\sqrt{2}}\int_0^{+\infty}g_{x(t)}^*\left(d_{x(t)}V,\xi^{\perp}(t)\right)e^{-t}dt,$$
with $G_0^t(x,\xi):=(x(t),\xi(t)).$ We observe that these two functions do not depend on $(x_1,\xi_1)$. It was proved in~\cite{EsRi14} that $\beta_{V}^u$ and 
$\beta_{V}^s$ are $\ml{C}^{\gamma}$-H\"older for every $\gamma<1/2$ -- lemma $5.13$ from this reference. According to lemma~$1$ in~\cite{Mo69}, we can write that 
\begin{equation}\label{e:prod-homeo}\exp (-\tilde{v}_{x_1,\xi_1}^{\eps})\circ\exp v_{x_1,\xi_1}^{\eps}=\exp(-\tilde{v}_{x_1,\xi_1}^{\eps}+v_{x_1,\xi_1}^{\eps}+r(\tilde{v}_{x_1,\xi_1}^{\eps},v_{x_1,\xi_1}^{\eps})),\end{equation} 
where $$\|r(\tilde{v}_{x_1,\xi_1}^{\eps},v_{x_1,\xi_1}^{\eps})\|_{\ml{C}^0}\leq C\|\tilde{v}_{x_1,\xi_1}^{\eps}\|_{\ml{C}^{\gamma}}\|v_{x_1,\xi_1}^{\eps}\|_{\ml{C}^0}^{\gamma},$$
for some uniform constant $C>0$ (depending on the manifold and on $\gamma$). In particular, we have that, in our setting, $\|r(\tilde{v}_{x_1,\xi_1}^{\eps},v_{x_1,\xi_1}^{\eps})\|_{\ml{C}^0}=\ml{O}(\eps^{1+\gamma})$ with the 
constant involved in the remainder which is uniform for $(x_1,\xi_1)$ in a small neighborhood of $S^*M$.

\begin{rema}\label{r:moser} The proof of this fact was given in the appendix of~\cite{Mo69} for the general case of vector fields on a Riemannian manifold. The only difference is that 
the proof given in this reference is for $\gamma=1$. Yet, the proof can be directly adapted to get the above estimate involving H\"older norms. 
\end{rema}

Finally, according to paragraphs $5.3.1$ and $5.3.2$ in~\cite{EsRi14}, one has that $\tilde{v}_{x_1,\xi_1}^{\eps}$ is equal to $v_{x_1,\xi_1}^{\eps}$ up to an error of order $\ml{O}(\eps^2)$ in the $\ml{C}^0$-topology. This property followed from the differentiation of the implicit equation~\eqref{e:implicit-equation}. In particular, thanks to~\eqref{e:prod-homeo}, we can derive that
$\exp (-\tilde{v}_{x_1,\xi_1}^{\eps})\circ h_{x_1,\xi_1}^{\eps}$ is close to identity up to an error of order $\ml{O}(\eps^{1+\gamma})$, where the constant involved is uniform for $(x_1,\xi_1)$ in a small neighborhood of $S^*M$. We underline that this property holds for every $0<\gamma<1/2$. To summarize, for every $0<\gamma<1/2$, we have that
\begin{equation}\label{e:inverse-holder}
\sup_{\rho\in S^*M}\left\{d\left(\left(h_{x_1,\xi_1}^{\eps}\right)^{-1}(\rho),\exp (-\tilde{v}_{x_1,\xi_1}^{\eps})(\rho)\right)\right\}=\ml{O}(\eps^{1+\gamma}),
\end{equation}
where the constant in the remainder is uniform for $(x_1,\xi_1)$ in a small neighborhood of $S^*M$. Thus, even if the map $\eps\mapsto (h_{x_1,\xi_1}^{\eps})^{-1}\in\ml{C}^0(S^*M,S^*M)$ is not ``smooth'', it can be approximated in a precise way by a smooth map which has a very explicit expression.

\section{Potentials satisfying~$\ml{C}_V=\emptyset$}
\label{a:example}
In this appendix, we will prove the following proposition which shows that the assumptions appearing in section~\ref{s:loschmidt} and corollary~\ref{c:coro1} are in some sense ``generic'':
\begin{prop}\label{p:geom-prop} Let $M$ be a smooth compact oriented Riemannian boundaryless surface. Then, the set
$$\ml{U}:=\left\{V\in\mathcal{C}^{\infty}(M,\IR):\ml{C}_V=\emptyset\right\}$$
 is open and dense in $\mathcal{C}^{\infty}(M,\IR)$ endowed with its natural topology of Fr\'echet space.
\end{prop}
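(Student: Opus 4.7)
The plan is to prove openness and density separately, reducing in both cases to the finite-order sets $\ml{C}_V^J := \bigcap_{j=0}^J \{\rho \in S^*M : (X_0^j.f_V)(\rho) = 0\}$ already introduced in~\eqref{e:J-critical-points}, and exploiting compactness of $S^*M$.

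For openness, I would fix $V \in \ml{U}$. The family $(\ml{C}_V^J)_{J \geq 0}$ is a decreasing chain of closed subsets of the compact manifold $S^*M$ whose intersection $\ml{C}_V$ is empty, so the finite intersection property yields some $J$ with $\ml{C}_V^J = \emptyset$. Equivalently, the continuous function $\rho \mapsto \max_{0 \leq j \leq J} |X_0^j.f_V(\rho)|$ is bounded below on $S^*M$ by some $\eta > 0$. Since the map $V \mapsto (X_0^j.f_V)_{0 \leq j \leq J}$ is continuous from $\ml{C}^{\infty}(M,\IR)$ (with its Fr\'echet topology) to $\ml{C}^0(S^*M,\IR^{J+1})$ -- it involves derivatives of $V$ up to order $J+1$ -- the uniform lower bound persists on a Fr\'echet neighborhood of $V$, which therefore lies in $\ml{U}$.

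For density, the plan is to apply the parametric transversality theorem (Sard--Smale) with $J = 3$ to
\[
\Phi: \ml{C}^{\infty}(M,\IR) \times S^*M \to \IR^4, \qquad (V, \rho) \mapsto \bigl(f_V,\, X_0.f_V,\, X_0^2.f_V,\, X_0^3.f_V\bigr)(\rho).
\]
The key point to check is that the partial derivative $\partial_V \Phi_{(V,\rho_0)}: \ml{C}^{\infty}(M,\IR) \to \IR^4$ is surjective at every $\rho_0 = (x_0,\xi_0) \in S^*M$. To verify this I would use Fermi coordinates $(t, r)$ around the unit-speed geodesic through $\rho_0$, in which the constant curvature $-1$ metric is $g = \cosh^2(r)\, dt^2 + dr^2$. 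A short computation then gives $f_V(\gamma(t), \dot\gamma(t)) = \partial_r V(t, 0)$, hence
\[
X_0^j.f_V(\rho_0) = \partial_t^j \partial_r V \big|_{(0,0)}, \qquad j = 0, 1, 2, 3,
\]
which are four linearly independent functionals on the $4$-jet of $V$ at $x_0$. Taking $\delta V(t, r) = \chi(t, r) \sum_{j=0}^3 \frac{a_j}{j!} t^j r$ with $\chi$ a smooth cutoff equal to $1$ near the origin realises any prescribed $(a_0, \ldots, a_3) \in \IR^4$, establishing the required surjectivity.

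Surjectivity of $\partial_V \Phi$ makes $\Phi$ transverse to $\{0\} \subset \IR^4$, so parametric transversality produces a residual (hence dense) set of $V$ for which the slice map $\Phi(V, \cdot): S^*M \to \IR^4$ is transverse to $\{0\}$; since $\dim S^*M = 3 < 4$, transversality forces $\Phi(V, \cdot)^{-1}(0) = \emptyset$, i.e.\ $\ml{C}_V \subset \ml{C}_V^3 = \emptyset$. The main technical obstacle I expect is the Fr\'echet setting, since Sard--Smale is usually stated for Banach manifolds. The remedy is to first apply the theorem in $\ml{C}^k(M,\IR)$ for some fixed large $k$ (say $k \geq 5$), obtain a dense set of $\ml{C}^k$ potentials satisfying $\ml{C}_V = \emptyset$, and then transfer to $\ml{C}^{\infty}$ using density of $\ml{C}^{\infty}(M,\IR)$ in $\ml{C}^k(M,\IR)$ together with the $\ml{C}^k$-openness that the openness argument above actually proves.
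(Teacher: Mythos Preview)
Your proposal takes essentially the same route as the paper: openness via compactness of $S^*M$ and continuity of $V\mapsto(X_0^j.f_V)_{0\le j\le J}$, and density via Sard--Smale applied to the map $(V,\rho)\mapsto\bigl(f_V,X_0.f_V,X_0^2.f_V,X_0^3.f_V\bigr)(\rho)\in\IR^4$, first in a Banach space $\ml{C}^{k}(M,\IR)$ with $k\ge 5$ and then transferred to $\ml{C}^{\infty}$.

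One remark: in your surjectivity check you write the metric in Fermi coordinates as $g=\cosh^2(r)\,dt^2+dr^2$, which is the constant curvature $-1$ form, whereas the proposition is stated for an arbitrary compact oriented Riemannian surface. This is not a real gap---your computation $f_V(\gamma(t),\dot\gamma(t))=\partial_rV(t,0)$ only uses that in Fermi coordinates the metric is orthonormal along $\{r=0\}$, which holds in general---but you should drop the $\cosh^2(r)$ and simply note that $g=dt^2+dr^2$ along the geodesic. The paper instead verifies surjectivity in an arbitrary coordinate chart by a triangular argument on the linear forms $W\mapsto X_0^j.f_W(\rho_0)$; your Fermi-coordinate calculation is cleaner and makes the independence of the four functionals more transparent.
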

We note that we do not require $M$ to be negatively curved in this statement. Recall that we have defined $f_V(x,\xi):=g_x^*(d_xV,\xi^{\perp})$, and 
$$\ml{C}_V:=\left\{\rho\in S^*M:\ \forall\ j\geq 0,\ X_0^j.f_V(\rho)= 0\right\}.$$ 
The proof below was indicated to us by Jean-Yves Welschinger. 

\begin{proof}
 Recall that the (natural) Fr\'echet topology on $\mathcal{C}^{\infty}(M,\IR)$ is in fact equivalent to the topology induced by the following metric:
$$\forall V,W\in\mathcal{C}^{\infty}(M,\IR),\ D(V,W):=\sum_{j\geq 0}\frac{1}{2^{j+1}}\min\left\{1,\|V-W\|_{\ml{C}^j}\right\},$$
where, for every $i\geq 0$, $\|.\|_{\ml{C}^i}$ is the usual norm on $\ml{C}^i(M,\IR)$.

We will first prove that the set $\ml{U}$ is open. We fix $V$ in $\ml{U}$. By compactness, we observe that there exists some $J_0>0$ such that, for every $\rho$ in $S^*M$, there exists $0\leq j\leq J_0$ such that $X_0^j.f_V(\rho)\neq 0$. We introduce
$$\tilde{f}_V(\rho):=\max\left\{|X_0^j.f_V(\rho)|:0\leq j\leq J_0\right\},$$
which is continuous on $S^*M$. From our assumption, there exists $0<\delta_0<1$ such that $\tilde{f}_V\geq \delta_0$ on $S^*M$. We now observe that, for every $j\geq 0$, there exists a constant $c_j\geq 1$ (depending only on $(M,g)$ and on $j$) such that, for every 
$\rho \in S^*M$ and for every $W$ in $\mathcal{C}^{\infty}(M,\IR)$, one has
$$|X_0^j.(f_V-f_W)(\rho)|\leq c_j\|V-W\|_{\ml{C}^{j+1}}.$$
If we take $W$ in $\mathcal{C}^{\infty}(M,\IR)$ such that $D(V,W)\leq \frac{\delta_0}{2^{J_0+2}\max_{0\leq j\leq J_0}c_j}$, then, for every $0\leq j\leq J_0$, one has $\|V-W\|_{\ml{C}^j}\leq\frac{\delta_0}{2\max_{0\leq j\leq J_0}c_j}$. Then, we deduce that $W$ belongs to $\ml{U}$.

It remains to show that the set $\ml{U}$ is dense. This will follow from the Sard-Smale's theorem~\cite{Sm65}. Before applying this theorem, we make a simple observation. We fix $V$ in $\ml{C}^{\infty}(M,\IR)$ and $\delta_0>0$. We observe that there exists $J_0\geq 5$ such that, for every $\tilde{W}$ in $\mathcal{C}^{\infty}(M,\IR)$,
$$D(V,\tilde{W})\leq \sum_{j=0}^{J_0}\frac{1}{2^{j+1}}\|V-\tilde{W}\|_{\ml{C}^j}+ \delta_0.$$
Suppose now that we are able to find $W$ in $\ml{C}^{J_0}(M,\IR)$ which is $\delta_0$ close to $V$ in the $\ml{C}^{J_0}$ topology and such that, for every $\rho\in S^*M$, there exists $0\leq j\leq 3$ verifying $X_0^j.f_W(\rho)\neq 0$. Then, we can use the fact that $\ml{C}^{\infty}(M,\IR)$ is dense in $\ml{C}^{J_0}(M,\IR)$ for the $\ml{C}^{J_0}$ topology and conclude. In fact, by density, we can find, for every $0<\delta<\delta_0$, $\tilde{W}$ in $\ml{C}^{\infty}(M,\IR)$ such that $\|\tilde{W}-W\|_{\ml{C}^{J_0}}\leq \frac{\delta}{\max_{0\leq j\leq J_0}c_j}$. In particular, for such a $\tilde{W}$, one has $D(V,\tilde{W})\leq 3\delta_0$, and 
$$\forall\rho\in S^*M,\ \max_{0\leq j\leq 3}\{|X_0^j.f_{\tilde{W}}(\rho)|\}\geq \max_{0\leq j\leq 3}\{|X_0^j.f_{W}(\rho)|\}-\delta.$$
Taking $\delta>0$ small enough to ensure that the above quantity is positive for every $\rho$ in $S^*M$, we have found $\tilde{W}$ in $\ml{C}^{\infty}(M,\IR)$ which is $\delta_0$ close to $V$ in the $\ml{C}^{\infty}$ topology and such that, for every $\rho$ in $S^*M$, there exists $0\leq j\leq 3$ satisfying $X_0^j.f_W(\rho)\neq 0$.

It now remains to prove that, for every $J_0\geq 5$, the set
$$\ml{U}_{J_0}:=\left\{V\in\ml{C}^{J_0}(M,\IR):\ \forall\rho\in S^*M,\ \exists 0\leq j\leq 3\ \text{s.t.}\ X_0^j.f_V(\rho)\neq 0\right\}$$
contains a dense subset of $\ml{C}^{J_0}(M,\IR)$. For that purpose, we define
$$\ml{L}:(V,\rho)\in\ml{C}^{J_0}(M,\IR)\times S^*M\mapsto (f_V(\rho),X_0.f_V(\rho),X_0^2.f_V(\rho),X_0^3.f_V(\rho))\in\IR^4.$$
This defines a $\ml{C}^1$ map on the Banach manifold $\ml{C}^{J_0}(M,\IR)\times S^*M$, and we say that $0$ is a regular value of $\ml{L}$ if, for every $(V,\rho)$ satisfying $\ml{L}(V,\rho)=0$, the tangent map $D_{(V,\rho)}\ml{L}$ is a continuous surjective linear map whose kernel has a closed complement. One can verify that it is in fact continuous, and that the kernel has a closed complement (the kernel has finite codimension). In order to verify the surjectivity, it is sufficient to show that
\begin{equation}\label{e:surjectivity}\forall z\in \IR^4,\ \exists W\in \ml{C}^{J_0}(M,\IR)\ \text{such that}\ D_{(V,\rho)}\ml{L}.(W,0)=z.\end{equation}
We note that $D_{(V,\rho)}\ml{L}.(W,0)=\ml{L}(W,\rho)$, and we introduce the following linear maps, for every $\rho_0=(x_0,\xi_0)$ in $S^*M$:
$$L_{\rho_0}^{(0)}:W\in\ml{B}_0:=\ml{C}^{J_0}(M,\IR)\mapsto f_W(\rho_0)\in\IR,$$
and, for $1\leq j\leq 3$,
$$L_{\rho_0}^{(j)}:W\in \ml{B}_j:=\cap_{l=0}^{j-1}\text{Ker} \left(L_{\rho_0}^{(l)}\right) \mapsto X_0^j.f_W(\rho_0)\in\IR.$$
For every $0\leq j\leq 3$, one can verify that these maps are \emph{nonvanishing} linear forms on the infinite dimensional Banach space $\ml{B}_j$ -- see remark~\ref{r:nonvanish} below. In particular, taking $W_j$ which does not belong to the kernel of $L_{\rho_0}^{(j)}$ for every $0\leq j\leq 3$, we can write that, for every $0\leq j\leq 3$,
$$\ml{L}(W_j,\rho_0)=(a_0^j, a_1^j, a_2^j, a_3^j),$$
where, by construction, $a_j^j\neq 0$, and $a_p^j=0$ for every $ p<j$. In particular, $\IR^4=\text{span}\{D_{(V,\rho)}\ml{L}.(W_j,0):0\leq j\leq 3\}$, and the map $D_{(V,\rho)}\ml{L}$ is surjective.
\begin{rema}\label{r:nonvanish} Above, we claimed that $L_{\rho_0}^{(j)}$ is a \emph{nonvanishing} linear form when it acts on the Banach space $\ml{B}_j\subset\ml{C}^{J_0}(M,\IR)$. This can be proved as follows. Let $0\leq j\leq 3$. Consider $\kappa: U\rightarrow V\subset\IR^2$ a chart centered at $x_0$ in $M$. This chart can be lifted to a chart centered at $(x_0,0)$ in $T^*M$ as follows
$$\tilde{\kappa}:T^*U\rightarrow T^*V\subset\IR^{4},\ \ \ (x,\xi)\mapsto (u^k,v_l):=(\kappa(x),(d\kappa(x)^T)^{-1}\xi).$$
For a fixed smooth function $W$ on $M$, we define in a small neighborhood of $0$ the function $\tilde{W}(u^k)=W\circ\kappa^{-1}(u^k).$ In these local coordinates, the map $f_W$ can rewritten:
$$f_W(x,\xi):=\sum_{k}\left(\sum_{l}\tilde{g}^{k,l}(u)v_l\right)\partial_k\tilde{W}(u).$$ 
where $(\tilde{g}^{k,l})_{k,l}$ is a non-degenerate $2$-form. For $\xi$ not equal to $0$, we note that at least one of the two coefficients $\tilde{v}_k:=\sum_{l}\tilde{g}^{k,l}(u)v_l$ does not vanish. In particular, at the point $(x_0,\xi_0)$, one has
$$f_W(x_0,\xi_0):=d_0\tilde{W}(\tilde{v}),$$ 
with $\tilde{v}\neq 0$ independent of $W$ (as $\xi_0\neq 0$). More generally, for $1\leq j\leq 3$, one can write in local coordinates,
$$X_0^j.f_W(x_0,\xi_0):=d^{j+1}\tilde{W}(\tilde{v},v,\ldots,v)+\sum_{\alpha\in\mathbb{N}^2:|\alpha|< j+1}a_{\alpha}^j\partial^{\alpha}\tilde{W}(0,0),$$
where $a_{\alpha}^j$ are real numbers which are independent of $W$. If we choose $W$ such that $d_{x_0}^pW=0$ for every $p<j$, then $W$ belongs to the space $\ml{B}_j$, and we have, for such a function $W$,
$$L_{\rho_0}^{(j)}(W):=X_0^j.f_W(x_0,\xi_0):=d^{j+1}\tilde{W}(\tilde{v},v,\ldots,v).$$
As the vectors $v$ and $\tilde{v}$ are both nonzero, we can find a function $\tilde{W}$ whose derivatives up to order $j$ vanish at $(0,0)$ and such that the previous quantity does not vanish.
\end{rema}

From the previous discussion, we can conclude that $\ml{L}^{-1}(0)$ defines a $\ml{C}^1$ submanifold of codimension $4$~\cite{La02} -- Ch.~$1$ and~$2$. We are now in position to conclude. For that purpose, we define the projection map
$$\Lambda:(V,\rho)\in\ml{C}^{J_0}(M,\IR)\times S^*M\mapsto V\in\ml{C}^{J_0}(M,\IR).$$
In the terminology\footnote{Recall that a Fredholm map is a $\ml{C}^1$ map whose tangent map defines a Fredholm operator. The index of a Fredholm map is then the index of its tangent map.} of~\cite{Sm65}, this defines a Fredholm map of index $3$. One can also consider the restriction $\Lambda\rceil_{\ml{L}^{-1}(0)}$ of this map to the codimension $4$ submanifold $\ml{L}^{-1}(0)$. This map can also be written $\Lambda\rceil_{\ml{L}^{-1}(0)}=\Lambda\circ I$, where $I$ is the inclusion map from $\ml{L}^{-1}(0)$ to $\ml{C}^{J_0}(M,\IR)\times S^*M$. Thanks to the above observation, $I$ is a Fredholm map of index $-4$. By the composition rules for Fredholm operators, one has that $\Lambda\rceil_{\ml{L}^{-1}(0)}$ is a Fredholm map of index $-1$. Thanks to the Sard-Smale's theorem -- for instance corollary $1.5$ in~\cite{Sm65}, there exists a dense subset $\ml{D}$ of $\ml{C}^{J_0}(M,\IR)$ such that, for every $V$ in $\ml{D}$, $(\Lambda\rceil_{\ml{L}^{-1}(0)})^{-1}(V)$ is empty. In particular, every $V$ in $\ml{D}$ belongs to $\ml{U}_{J_0}$; thus, $\ml{U}_{J_0}$ is a dense subset.

\end{proof}


\section*{Acknowledgements}
The author is partially supported by the Agence Nationale de la Recherche through the Labex CEMPI (ANR-11-LABX-0007-01) and the ANR project GeRaSic (ANR-13-BS01-0007-01). We warmly thank Suresh Eswarathasan for his comments on a preliminary version of this work, and Jean-Yves Welschinger for indicating us the proof of proposition~\ref{p:geom-prop}.


\begin{thebibliography}{99}
\bibitem{Ab63} R.~Abraham \emph{Lectures of Smale on differential topology}, Lectures at Columbia University (1962)
\bibitem{An08} N.~Anantharaman \emph{Entropy and the localization of eigenfunctions}, 
Ann. of Math. $\mathbf{168}$ (2008), 435--475
\bibitem{AnNo07} N.~Anantharaman, S.~Nonnenmacher \emph{Half-delocalization of eigenfunctions for the Laplacian on an Anosov manifold}, 
Festival Yves Colin de Verdi\`ere. Ann. Inst. Fourier (Grenoble) $\mathbf{57}$ (2007), 2465--2523
\bibitem{AnRi12} N.~Anantharaman, G.~Rivi\`ere \emph{Dispersion and controllability for the Schr\"odinger equation on negatively curved manifolds}, Analysis and PDE  $\mathbf{5}$ (2012), 313--338
\bibitem{Ano67} D.V.~Anosov \emph{Geodesic flows on closed Riemannian manifolds of negative curvature}, Trudy Mat. Inst. Steklov. $\mathbf{90}$ (1967)
\bibitem{BaGrPa99} D. Bambusi, S. Graffi, T. Paul \emph{Long time semiclassical approximation of quantum flows: a proof of the Ehrenfest time}, Asymptotic Anal. $\mathbf{21}$ (1999), 149--160
\bibitem{BaWo07} L.~Barreira and C.~Wolf, \emph{Dimension and ergodic decompositions for hyperbolic flows}, Discrete
Contin. Dyn. Syst. $\mathbf{17}$ (2007), 201--212
\bibitem{BolSc06} J.~Bolte, T.~Schwaibold \emph{Stability of wave packet dynamics under perturbations}, 
Phys. Rev. E73 (2006) 026223. 
\bibitem{BoDB00} F.~Bonechi, S.~De Bi\`evre \emph{Exponential mixing and $|\log\hbar|$ time scales in quantized hyperbolic maps on the torus}, 
Comm. Math. Phys. $\mathbf{211}$ (2000), 659--686
\bibitem{BouDB05} J.M.~Bouclet, S.~De Bi\`evre \emph{Long time propagation and control on scarring for perturbed quantized hyperbolic toral automorphisms}, 
Ann. H. Poincar\'e $\mathbf{6}$ (2005), 885--913
\bibitem{BoRo02} A.~Bouzouina, D.~Robert \emph{Uniform semiclassical estimates for the propagation of quantum observables}, 
Duke Math. J. $\mathbf{111}$ (2002), 223--252
\bibitem{BroLin14} S.~Brooks, E.~Lindenstrauss \emph{Joint quasimodes, positive entropy, and quantum unique ergodicity}, 
Inv. Math. $\mathbf{198}$ (2014), 219--259
\bibitem{Burq97} N.~Burq \emph{Mesures semi-classiques et mesures de d\'efaut}, 
Sem. Bourbaki 1996--1997, Exp. 826 (1997), 167--195
\bibitem{CanJaTo12} Y. Canzani, D.~Jakobson, J.~Toth \emph{On the distribution of perturbations of propagated Schr\"odinger eigenfunctions},  J. of Spectral Theory $\mathbf{4}$ (2014), 283--307
\bibitem{Ca28} H.~Cartan \emph{Sur les syst\`emes de fonctions holomorphes \`a vari\'et\'es lin\'eaires lacunaires et leurs applications}, 
Ann. Sci. ENS $\mathbf{45}$ (1928), 255--346
\bibitem{CdV85} Y.~Colin de Verdi\`ere \emph{Ergodicit\'e et fonctions propres du Laplacien}, Comm. in Math. Phys. $\mathbf{102}$, 497--502 (1985)
\bibitem{CoRo97} M.~Combescure, D.~Robert \emph{Semiclassical spreading of quantum wave packets and applications near unstable fixed points of the classical flow}, Asymptotic Anal. $\mathbf{14}$ (1997), 377--404
\bibitem{CoRo07} M.~Combescure, D.~Robert \emph{A phase-space study of the quantum Loschmidt Echo in the semiclassical limit}, 
Ann. H. Poincar\'e $\mathbf{8}$ (2007), 91--108
\bibitem{dLLMM86} R.~de la Llave, J.M.~Marco, R.~Moriyon \emph{Canonical perturbation theory of Anosov systems and regularity results for the Livsic cohomology equation}, 
Ann. Math. $\mathbf{123}$ (1986), 537--611
\bibitem{DiSj99} M.~Dimassi, J.~Sj\"ostrand \emph{Spectral asymptotics in the semi-classical limit}, London Math. Soc. Lecture notes series $\mathbf{268}$,
\bibitem{DyGu14} S.~Dyatlov, C.~Guillarmou \emph{Microlocal limits of plane waves and Eisenstein functions}, . Ann. Sci. \'Ec. Norm. Sup\'er. $\mathbf{47}$ (2014), 371--448.
\bibitem{Ee58} J.~Eels Jr \emph{On the geometry of function spaces}, Symp. Inter. de Topolog\'{\i}a Alg. Mexico, Universidad Nacional Aut\'onoma de M\'exico y la Unesco (1958), 303--308
\bibitem{Ee66} J.~Eels Jr \emph{A setting for global analysis}, Bull. Amer. Math. Soc. Vol. $\mathbf{72}$, Number 5 (1966), 751--807
\bibitem{EsRi14} S.~Eswarathasan, G.~Rivi\`ere \emph{Perturbation of the semiclassical Schr\"odinger equation on negatively curved surfaces}, arXiv:1405.3231 (2014)
\bibitem{EsTo12} S.~Eswarathasan, J.~Toth \emph{Average pointwise bounds for deformations of Schrodinger eigenfunctions }, Ann. H. Poincar\'e $\mathbf{14}$ (2012), 611--637
\bibitem{Fu73} H.~Furstenberg \emph{The unique ergodicity of the horocycle flow}, 
Recent advances in topological dynamics (Proc. Conf., Yale Univ., New Haven, Conn., 1972; in honor of Gustav Arnold Hedlund), Lecture Notes in Math. 
$\mathbf{318}$, Springer, Berlin (1973),  95--115.
\bibitem{Ge91} P.~G\'erard \emph{Mesures semi-classiques et ondes de Bloch}, 
Sem. EDP (Polytechnique) 1990--1991, Exp. 16 (1991)
\bibitem{GPSZ06} T.~Gorin, T.~Prosen, T.H.~Seligman, M.~Zdinaric \emph{Dynamics of Loschmidt echoes and fidelity decay}, Physics Reports $\mathbf{435}$ (2006) 33--156
\bibitem{GJPW12} A.~Goussev, R.A.~Jalabert, H.M.~Pastawski, D.~Wisniacki \emph{Loschmidt Echo}, Scholarpedia 7(8), 11687,  arXiv:1206.6348 (2012)
\bibitem{HeMaRo87} B.~Helffer, A.~Martinez, D.~Robert \emph{Ergodicit\'e et limite semi-classique},  Comm. in Math. Phys. $\mathbf{109}$, 313-326 (1987)
\bibitem{PuHi75} M.~Hirsch, C.~Pugh \emph{Smoothness of horocycle foliations}, J. Diff. Geom. $\mathbf{10}$ (1975), 225--238
\bibitem{Ho85} L.~H\"ormander \emph{The Analysis of Linear Partial Differential Operators III}, Springer-Verlag, Berlin, New York (1985)
\bibitem{JaPe09} P.~Jacquod, C.~Petitjean \emph{Decoherence, Entanglement and Irreversibility in Quantum Dynamical
Systems with Few Degrees of Freedom}, Adv. Phys. $\mathbf{58}$, 67--196 (2009)
\bibitem{JaSiBe01} P. Jacquod, P. Silvestrov, C. Beenakker, \emph{Golden rule decay versus Lyapunov decay of the quantum Loschmidt echo}, Phys. Rev. E 64 (2001)
055203(R)
\bibitem{JalPas01} R.A.~Jalabert, H.M.~Pastawski \emph{Environment-independent decoherence rate in
classically chaotic systems}, Phys. Rev. Lett. $\mathbf{86}$, 2490 (2001)
\bibitem{KaHa} A.~Katok, B.~Hasselblatt \emph{Introduction to the modern theory of dynamical systems}, Encyclopedia of Mathematics and its Applications, $\mathbf{54}$. Cambridge University Press, Cambridge (1995).
\bibitem{La02} S.~Lang \emph{Introduction to differentiable manifolds}, 2nd Edition, Springer-Verlag New York (2002)
\bibitem{MacRiv14} F.~Maci\`a, G.~Rivi\`ere, in preparation (2014)
\bibitem{Mar77} B.~Marcus \emph{Ergodic properties of horocycle flows for surfaces of negative curvature} ,
Ann. of Math. $\mathbf{105}$ (1977), 81--105
\bibitem{Mo69} J.~Moser \emph{On a theorem of Anosov}, Diff. Eq. $\mathbf{5}$ (1969), 411--440
\bibitem{No13} S.~Nonnenmacher \emph{Anatomy of quantum chaotic eigenstates}, 
Chaos, Progress in Mathematical Physics Vol. $\mathbf{66}$ (2013), 193--238 
\bibitem{Pe84} A.~Peres \emph{Stability of quantum motion in chaotic and regular systems}, Phys. Rev. A $\mathbf{30}$ (1984), 1610--1615
\bibitem{Rug07} R.O.~Ruggiero \emph{Dynamics and global geometry of manifolds without conjugate points}, Ensaios Mat. Vol. $\mathbf{12}$, Soc. Bras. Mat. (2007)
\bibitem{Sa11} P.~Sarnak \emph{Recent progress on the quantum unique ergodicity conjecture}, Bull. AMS $\mathbf{48}$, 211--228 (2011)
\bibitem{Sch05} R.~Schubert \emph{Semiclassical behaviour of expectation values in time evolved Lagrangian states for large times}, Commun. Math. Phys. $\mathbf{256}$, 239-254 (2005)
\bibitem{Sh74} A.~Shnirelman \emph{Ergodic properties of eigenfunctions}, Usp. Math. Nauk. $\mathbf{29}$, 181-182 (1974)
\bibitem{Sm65} S.~Smale \emph{An infinite dimensional version of Sard's theorem}, American J. Math. $\mathbf{87}$, 861--866 (1965)
\bibitem{Sm67} S.~Smale \emph{Differentiable dynamical systems}, Bull. AMS $\mathbf{73}$, 747--817 (1967)
\bibitem{Ze87} S.~Zelditch \emph{Uniform distribution of the eigenfunctions on compact hyperbolic surfaces}, Duke Math. Jour. $\mathbf{55}$, 919--941 (1987)
\bibitem{Ze10} S.~Zelditch \emph{Recent developments in mathematical quantum chaos}, Current developments in mathematics, 2009, 115--204, Int. Press, Somerville, MA, 2010
\bibitem{Zw12} M.~Zworski \emph{Semiclassical analysis}, Graduate Studies in Mathematics $\mathbf{138}$. American Mathematical Society, Providence, RI (2012)
\end{thebibliography}
\end{document}